\tikzset{vertex/.style={circle,draw,fill,inner sep=0pt,minimum size=1mm}}
\theoremstyle{plain}
\newtheorem{thm}{Theorem}
\newtheorem{lem}[thm]{Lemma}
\newtheorem{prop}[thm]{Proposition}
\newtheorem{cor}[thm]{Corollary}
\theoremstyle{definition}
\newtheorem{definition}[thm]{Definition}
\newtheorem{exl}[thm]{Example}
\newtheorem{remark}[thm]{Remark}
\numberwithin{thm}{section}
\newcommand{\adj}{\leftrightarrow}
\def\N{{\mathbb N}}
\def\Z{{\mathbb Z}}
\def\R{{\mathbb R}}
\begin{document}
\title{Alternate Product Adjacencies in Digital Topology}
\author{Laurence Boxer
         \thanks{
    Department of Computer and Information Sciences,
    Niagara University,
    Niagara University, NY 14109, USA;
    and Department of Computer Science and Engineering,
    State University of New York at Buffalo.
    E-mail: boxer@niagara.edu
    }
}

\date{ }
\maketitle

\begin{abstract}
We study properties of Cartesian products of digital images, using a variety of
adjacencies that have appeared in the literature.

Key words and phrases: digital topology, digital image, retraction, approximate fixed point property, continuous multivalued function, shy map

2010 MSC: 54C99, 05C99
\end{abstract}

\section{Introduction}
We study various adjacency relations for Cartesian products of
multiple digital images. We are
particularly interested in
``product properties'' - properties that are
preserved by taking Cartesian products - 
and ``factor properties'' for which
possession by a Cartesian product of digital images implies possession of the property by the 
factors. Many of the properties examined in this paper were
considered in~\cite{Boxer16a} for
adjacencies based on the
normal product adjacency. We consider other adjacencies in this
paper, including the tensor product adjacency, the Cartesian product adjacency, and the
composition or lexicographic adjacency.

\section{Preliminaries}
\label{prelims}
Much of the material that appears in this section is quoted or paraphrased
from~\cite{Boxer16a,BoxSta16%,BoSt1
}, and other papers cited in this section.

We use $\N$, $\Z$, and $\R$ to represent
the sets of natural numbers, integers, and 
real numbers, respectively,  

A digital image is a graph. Usually, we consider the vertex set of a digital image to be
a subset of $\Z^n$ for some $n \in \N$. Further, we often, although not always, restrict
our study of digital images to finite graphs.
We will assume familiarity with the topological theory of digital images. See, e.g., \cite{Boxer94} for many of the standard definitions. All digital images $X$ are assumed to carry their own adjacency relations (which may differ from one image to another). When we wish to emphasize the particular adjacency relation we write the image as $(X,\kappa)$, where $\kappa$ represents
the adjacency relation.

\subsection{Common adjacencies}
To denote
that $x$ and $y$ are $\kappa$-adjacent points of some
digital image, we use the notation
$x \adj_{\kappa} y$, or $x \adj y$ when $\kappa$ can be understood.

The $c_u$-adjacencies are commonly used.
Let $x,y \in \Z^n$, $x \neq y$. Let $u$ be an integer,
$1 \leq u \leq n$. We say $x$ and $y$ are $c_u$-adjacent,
$x \adj_{c_u} y$, if
\begin{itemize}
\item there are at most $u$ indices $i$ for which 
      $|x_i - y_i| = 1$, and
\item for all indices $j$ such that $|x_j - y_j| \neq 1$ we
      have $x_j=y_j$.
\end{itemize}
A $c_u$-adjacency is often denoted by the number of points
adjacent to a given point in $\Z^n$ using this adjacency.
E.g.,
\begin{itemize}
\item In $\Z^1$, $c_1$-adjacency is 2-adjacency.
\item In $\Z^2$, $c_1$-adjacency is 4-adjacency and
      $c_2$-adjacency is 8-adjacency.
\item In $\Z^3$, $c_1$-adjacency is 6-adjacency,
      $c_2$-adjacency is 18-adjacency, and $c_3$-adjacency
      is 26-adjacency.
\end{itemize}

For Cartesian products of digital images, the normal product adjacency (see
Definitions~\ref{NP-def} and~\ref{NP_u-def}) has been used in papers
including~\cite{Han05,Boxer06,BoxKar12,Boxer16a} (errors in~\cite{Han05} are corrected
in~\cite{Boxer06}).
The tensor product adjacency (see Definition~\ref{tensor-def}), Cartesian product
adjacency (see Definition~\ref{product-adj-def}), and the lexicographic adjacency
(see Definition~\ref{lexico}) have not to our knowledge been studied in digital topology, so
their respective roles in digital topology remain to be determined.

Given digital images or graphs $(X,\kappa)$ and $(Y,\lambda)$, the
{\em normal product adjacency} $NP(\kappa,\lambda)$, also called
the {\em strong product adjacency}
(denoted $\kappa_*(\kappa,\lambda)$ in~\cite{BoxKar12}) generated by
$\kappa$ and $\lambda$ on the Cartesian product $X \times Y$ is defined
as follows.

\begin{definition}
\label{NP-def}
\rm{\cite{Berge,vL-W}}
Let $x, x' \in X$, $y, y' \in Y$.
Then $(x,y)$ and $(x',y')$ are $NP(\kappa,\lambda)$-adjacent in $X \times Y$
if and only if
\begin{itemize}
\item $x=x'$ and $y \adj_{\lambda} y'$; or
\item $x \adj_{\kappa} x'$ and $y=y'$; or
\item $x \adj_{\kappa} x'$ and $y \adj_{\lambda} y'$. \qed
\end{itemize}
\end{definition}

As a generalization of Definition~\ref{NP-def},
we have the following.

\begin{definition}
\label{NP_u-def}
\rm{\cite{Boxer16a}}
Let $u$ and $v$ be positive integers, $1 < u \leq v$. Let $\{(X_i,\kappa_i)\}_{i=1}^v$ be
digital images. Let $NP_u(\kappa_1, \ldots, \kappa_v)$ be the adjacency
defined on the Cartesian product $\Pi_{i=1}^v X_i$ as follows.
For $x_i,x_i' \in X_i$, $p=(x_1, \ldots, x_v)$ and $q=(x_1', \ldots, x_v')$ are
$NP_u(\kappa_1, \ldots, \kappa_v)$-adjacent if and only if
\begin{itemize}
\item for at least 1 and at most $u$ indices $i$, $x_i \adj_{\kappa_i} x_i'$, and
\item for all other indices $i$, $x_i=x_i'$. \qed
\end{itemize}
\end{definition}

\begin{definition}
\label{tensor-def}
{\rm \cite{Harary&Trauth}}
The {\em tensor product adjacency} on the
Cartesian product $\Pi_{i=1}^v X_i$ of $(X_i,\kappa_i)$,
denoted $T(\kappa_1,\ldots, \kappa_v)$, is as
follows. Given $x_i,x_i' \in X_i$, we have
$(x_1,\ldots, x_v)$ and $(x_1',\ldots,x_v')$ are
$T(\kappa_1,\ldots, \kappa_v)$-adjacent in
$\Pi_{i=1}^v X_i$ if and only if for all~$i$,
$x_i \adj_{\kappa_i} x_i'$. $\qed$
\end{definition}

\begin{figure}
\includegraphics[height=1.75in]{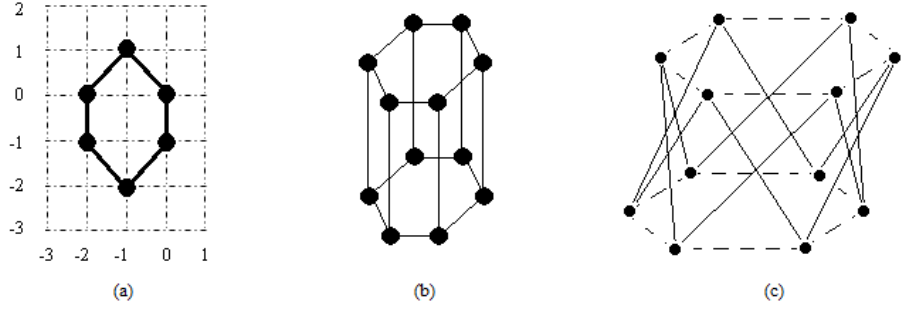}
\label{msc8-fig}
\caption{A digital simple closed
curve and its Cartesian product with
$[0,1]_{\Z}$.
(a) shows the simple
closed curve $MSC_8 \subset (\Z^2,c_2)$~\cite{Han03}.
(b) shows the set $MSC_8 \times [0,1]_{\Z} \subset \Z^3$ with either the $c_2 \times c_1$- or the $NP_1(c_2,c_1)$-adjacency.
(c) shows the set $MSC_8 \times [0,1]_{\Z} \subset \Z^3$ with the
$T(c_2, c_1)$-adjacency, where
adjacencies are shown by the solid lines. If the points of $MSC_8$ are circularly labeled $p_0, \ldots, p_5$,
then the $T(c_2,c_1)$-neighbors of
$(p_i, t)$ are $(p_{(i-1)\mod 6},1-t)$
and $(p_{(i+1)\mod 6},1-t)$, $t \in \{0,1\}$.
}
\end{figure}

\begin{definition}
{\rm \cite{Sabidussi60}}
\label{product-adj-def}
The {\em Cartesian product adjacency} on the
Cartesian product $\Pi_{i=1}^v X_i$ of $(X_i,\kappa_i)$,
denoted $\times_{i=1}^v \kappa_i$ or
$\kappa_1 \times \ldots \times \kappa_v$, is as
follows. Given $x_i,x_i' \in X_i$, we have
$(x_1,\ldots, x_v)$ and $(x_1',\ldots,x_v')$ are
$\times_{i=1}^v \kappa_i$-adjacent in
$\Pi_{i=1}^v X_i$ if and only if for some ~$i$,
$x_i \adj_{\kappa_i} x_i'$, and
for all indices $j \neq i$, $x_j = x_j'$. $\qed$
\end{definition}

The following has an elementary proof.

\begin{prop}
\label{CartesianAndNP}
For $\Pi_{i=1}^v (X_i,\kappa_i)$,
$\times_{i=1}^v \kappa_i = NP_1(\kappa_1,\ldots,\kappa_v)$. $\Box$
\end{prop}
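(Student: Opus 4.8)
The plan is to prove the claimed identity of adjacencies by unwinding both definitions and checking that their defining conditions are logically equivalent. Since $\times_{i=1}^v \kappa_i$ and $NP_1(\kappa_1,\ldots,\kappa_v)$ are relations on the same underlying set $\Pi_{i=1}^v X_i$, it suffices to show that an arbitrary pair of points $p=(x_1,\ldots,x_v)$ and $q=(x_1',\ldots,x_v')$ satisfies one adjacency if and only if it satisfies the other.

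First I would record the convention that $NP_1$ denotes the instance $u=1$ of Definition~\ref{NP_u-def}, in which the clause ``for at least $1$ and at most $u$ indices $i$, $x_i \adj_{\kappa_i} x_i'$'' specializes to ``for exactly one index $i$, $x_i \adj_{\kappa_i} x_i'$.'' I would also note the standing convention that each $\kappa_i$-adjacency is irreflexive, i.e. $x_i \adj_{\kappa_i} x_i'$ forces $x_i \neq x_i'$. This irreflexivity is the one point worth making explicit, since it is what lets the existential ``for some $i$'' in Definition~\ref{product-adj-def} behave like ``for exactly one $i$.''

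For the forward direction, suppose $p$ and $q$ are $\times_{i=1}^v \kappa_i$-adjacent. Then there is an index $i$ with $x_i \adj_{\kappa_i} x_i'$ and $x_j = x_j'$ for every $j \neq i$. Because adjacency is irreflexive, $x_i \neq x_i'$, so $i$ is the unique coordinate at which $p$ and $q$ differ; in particular no $j \neq i$ can satisfy $x_j \adj_{\kappa_j} x_j'$. Hence exactly one index carries a $\kappa$-adjacency and all others agree, which is precisely the $NP_1$ condition. Conversely, if $p$ and $q$ are $NP_1(\kappa_1,\ldots,\kappa_v)$-adjacent, then there is exactly one index $i$ with $x_i \adj_{\kappa_i} x_i'$ and $x_j = x_j'$ for all other $j$; this $i$ witnesses the ``for some $i$'' requirement of Definition~\ref{product-adj-def}, so $p$ and $q$ are $\times_{i=1}^v \kappa_i$-adjacent.

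Because both inclusions hold for every pair of points, the two relations coincide. I expect no genuine obstacle here: the argument is a direct comparison of the defining clauses, and the only step deserving a sentence rather than a symbol is the appeal to irreflexivity of adjacency, which reconciles the existential phrasing of the Cartesian product adjacency with the ``exactly one'' phrasing of $NP_1$.
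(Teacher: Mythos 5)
Your proof is correct, and it is exactly the elementary definition-unwinding the paper has in mind when it states the proposition with the proof omitted ("The following has an elementary proof"). Your one substantive observation---that irreflexivity of each $\kappa_i$ is what makes the existential ``for some $i$'' of Definition~\ref{product-adj-def} match the ``exactly one index'' reading of $NP_1$ in Definition~\ref{NP_u-def}---is precisely the point worth recording, and you handle it correctly.
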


\begin{definition}
\label{lexico}
\rm{\cite{Harary}}
Let $(X_i,\kappa_i)$ be digital images,
$1 \le i \le v$. Let $x_i, x_i' \in X_i$.
Let $p=(x_1,\ldots,x_v)$,
$p'=(x_1',\ldots,x_v')$.
We say $p$ and $p'$ are adjacent in
the {\em composition} or {\em lexicographic} adjacency on $\Pi_{i=1}^v X_i$ if $x_1 \adj_{\kappa_1} x_1'$, or if for some
index $j$, $1 \le j < v$, we have
$(x_1,\ldots, x_j) = (x_1',\ldots,x_j')$
and $x_{j+1} \adj_{\kappa_{j+1}} x_{j+1}'$. The
adjacency is denoted
$L(\kappa_1,\ldots,\kappa_v)$. $\qed$
\end{definition}

\begin{figure}
\includegraphics[height=1.5in]{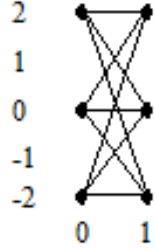}
\label{lex-adj-fig}
\caption{An illustration of lexicographic adjacency. This is
$[0,1]_{\Z} \times \{-2,0,2\}$, with
both factors regarded as subsets
of $(\Z,c_1)$, and
the $L(c_1,c_1)$ adjacency.}
\end{figure}

\begin{remark}
\label{gaps-allowed}
Notice that for $p$ and $p'$ to be
$L(\kappa_1,\ldots,\kappa_v)$-adjacent
with $x_k$ and $x_k'$ $\kappa_k$-adjacent, for indices
$m>k$ we do not require that
$x_m$ and $x_m'$ be either
equal or adjacent. See, e.g.,
Figure~%\ref{lex-adj-fig}
2,
where $(0,0)$ and $(1,2)$ are $L(c_1,c_1)$-adjacent.
This is unlike
other adjacencies discussed above.
$\qed$
\end{remark}

\subsection{Connectedness}
A subset $Y$ of a digital image $(X,\kappa)$ is
{\em $\kappa$-connected}~\cite{Rosenfeld},
or {\em connected} when $\kappa$
is understood, if for every pair of points $a,b \in Y$ there
exists a sequence $\{y_i\}_{i=0}^m \subset Y$ such that
$a=y_0$, $b=y_m$, and $y_i \adj_{\kappa} y_{i+1}$ for $0 \leq i < m$.

For two subsets $A,B\subset X$, we will say that $A$ and $B$ are \emph{adjacent} when there exist points $a\in A$ and $b\in B$ such that $a$ and $b$ are equal or adjacent. Thus sets with nonempty intersection are automatically adjacent, while disjoint sets may or may not be adjacent. It is easy to see that a finite union of connected adjacent sets is connected. 

\subsection{Continuous functions}
The following generalizes a definition of
~\cite{Rosenfeld}.

\begin{definition}
\label{continuous}
{\rm ~\cite{Boxer99}}
Let $(X,\kappa)$ and $(Y,\lambda)$ be digital images. A function
$f: X \rightarrow Y$ is $(\kappa,\lambda)$-continuous if for
every $\kappa$-connected $A$ of $X$ we have that
$f(A)$ is a $\lambda$-connected subset of $Y$. 
\end{definition}

When the adjacency relations are understood, we will simply say that $f$ is \emph{continuous}. Continuity can be reformulated in terms of adjacency of points:
\begin{thm}
\label{cont-by-adj}
{\rm ~\cite{Rosenfeld,Boxer99}}
A function $f:X\to Y$ is continuous if and only if, for any adjacent points $x,x'\in X$, the points $f(x)$ and $f(x')$ are equal or adjacent. \qed
\end{thm}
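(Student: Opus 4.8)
The plan is to prove both directions of the biconditional, treating continuity (as given in Definition~\ref{continuous}, phrased in terms of connected sets) as the hypothesis in one direction and the adjacency condition as the hypothesis in the other.

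First I would prove the forward direction: assume $f$ is continuous and let $x,x' \in X$ be adjacent points. The key observation is that the two-point set $A = \{x,x'\}$ is $\kappa$-connected, since the defining sequence for connectedness can be taken to be $x, x'$ itself (using $x \adj_\kappa x'$). By continuity, $f(A) = \{f(x), f(x')\}$ must be $\lambda$-connected. I would then argue that a connected set with at most two elements forces $f(x)$ and $f(x')$ to be equal or adjacent: if $f(x) = f(x')$ the set is a singleton and we are done; otherwise $f(A)$ has exactly two distinct points, and the connecting sequence guaranteed by connectedness must step between them via an edge, so $f(x) \adj_\lambda f(x')$.

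Second I would prove the converse: assume that adjacent points always map to equal-or-adjacent points, and let $A \subseteq X$ be $\kappa$-connected. I must show $f(A)$ is $\lambda$-connected. Take any two points $c,d \in f(A)$, say $c = f(a)$ and $d = f(b)$ with $a,b \in A$. Since $A$ is connected, there is a sequence $a = y_0, y_1, \ldots, y_m = b$ in $A$ with consecutive terms $\kappa$-adjacent. Applying $f$ gives a sequence $f(y_0), \ldots, f(y_m)$ in $f(A)$ joining $c$ to $d$; by the hypothesis each consecutive pair is equal or $\lambda$-adjacent. To convert this into a genuine connecting sequence in the sense of the definition (where consecutive terms must be adjacent, not merely equal), I would delete repeated consecutive terms, which preserves the endpoints and leaves a sequence whose consecutive terms are genuinely $\lambda$-adjacent. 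Hence $f(A)$ is $\lambda$-connected.

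The main subtlety, rather than a deep obstacle, is the careful handling of the distinction between \emph{equal or adjacent} and strict adjacency, which must be managed in both directions: in the forward direction when $f(x)=f(x')$ collapses $f(A)$ to a singleton, and in the converse when the image sequence may contain consecutive equal terms that need to be suppressed to meet the literal requirement $y_i \adj_\kappa y_{i+1}$ in the definition of connectedness. Everything else is a routine unwinding of the two definitions, so I would keep the argument short and emphasize only these edge cases.
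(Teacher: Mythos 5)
Your proof is correct. The paper states this theorem without proof, citing \cite{Rosenfeld,Boxer99}, and your argument is precisely the standard one from those sources: a direct unwinding of Definition~\ref{continuous} in both directions, with correct handling of the one genuine subtlety---that the paper's definition of connectedness demands strict adjacency (not equality) between consecutive terms of the witnessing sequence---via the singleton case when $f(x)=f(x')$ in the forward direction and the deletion of repeated consecutive terms in the converse.
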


Note that similar notions appear
in~\cite{Chen94,Chen04} under the names
{\em immersion}, {\em gradually varied operator},
and {\em gradually varied mapping}.

\begin{thm}
\label{composition}
\rm{\cite{Boxer94,Boxer99}}
If $f: (A,\kappa) \to (B,\lambda)$ and $g: (B,\lambda) \to (C, \mu)$ are
continuous, then $g \circ f: (A,\kappa) \to (C, \mu)$ is continuous. \qed
\end{thm}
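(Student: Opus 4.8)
The plan is to prove continuity of $g \circ f$ directly from Definition~\ref{continuous}, exploiting the fact that continuity there is phrased entirely in terms of preservation of connected sets. First I would fix an arbitrary $\kappa$-connected subset $S \subseteq A$; the goal is to show that $(g\circ f)(S)$ is a $\mu$-connected subset of $C$.

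The key observation is the set-theoretic identity $(g \circ f)(S) = g\bigl(f(S)\bigr)$, which lets me feed the image of $S$ through the two hypotheses in turn. Since $f$ is $(\kappa,\lambda)$-continuous and $S$ is $\kappa$-connected, $f(S)$ is a $\lambda$-connected subset of $B$. Applying the $(\lambda,\mu)$-continuity of $g$ to the $\lambda$-connected set $f(S)$ then yields that $g\bigl(f(S)\bigr)$ is $\mu$-connected. By the identity above this is exactly $(g\circ f)(S)$, so $g\circ f$ carries the arbitrary $\kappa$-connected set $S$ to a $\mu$-connected set, which is precisely the definition of $(\kappa,\mu)$-continuity.

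There is essentially no real obstacle here: the statement is a formal consequence of the fact that composition of functions commutes with taking images, combined with the ``connectedness is preserved'' formulation of continuity. An alternative, equally short route would instead invoke the point-adjacency characterization of Theorem~\ref{cont-by-adj}. Given adjacent points $a, a' \in A$, continuity of $f$ forces $f(a)$ and $f(a')$ to be equal or $\lambda$-adjacent; a short case split then finishes, since equal values map under $g$ to equal values, while $\lambda$-adjacent values map to equal-or-$\mu$-adjacent values by the continuity of $g$. In either case $(g\circ f)(a)$ and $(g\circ f)(a')$ are equal or $\mu$-adjacent, and Theorem~\ref{cont-by-adj} concludes. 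I would favour the connected-set argument, as it dispenses with the case analysis entirely and makes no reference to any particular product adjacency, keeping the proof valid for every adjacency relation considered in the paper.
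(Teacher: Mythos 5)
Your proof is correct and complete. Note that the paper itself gives no proof of this theorem --- it is quoted from the references \cite{Boxer94,Boxer99} with only a \qed --- so there is no internal argument to compare against; your primary route is the standard one, a direct application of Definition~\ref{continuous} to the identity $(g\circ f)(S)=g\bigl(f(S)\bigr)$, and your alternative via the point-adjacency characterization of Theorem~\ref{cont-by-adj} is equally valid (the two are interchangeable precisely because that theorem shows the two formulations of continuity agree). Your closing remark is also apt: neither argument uses anything about the specific adjacency relations, so the result holds verbatim for every adjacency considered in the paper.
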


\begin{exl}
\rm{\cite{Rosenfeld}}
\label{const-exl}
A constant function between digital images is continuous. \qed
\end{exl}

\begin{exl}
\label{id-exl}
The identity function $1_X: (X, \kappa) \to (X, \kappa)$ is continuous. $\Box$
\end{exl}

\begin{definition}
Let $(X,\kappa)$ be a digital image
in $\Z^n$.
Let $x,y \in X$. A {\em 
$\kappa$-path of length $m$ from $x$ to $y$}
is a set $\{x_i\}_{i=0}^m \subset X$
such that $x=x_0$, $x_m=y$, and
$x_{i-1}$ and $x_i$ are equal or $\kappa$-adjacent for $1 \leq i \leq m$. If $x=y$, we say $\{x\}$ is a
{\em path of length 0 from $x$ to $x$}.
\end{definition}

Notice that for a path from $x$ to $y$ as
described above, the function
$f: [0,m]_{\Z} \to X$ defined by
$f(i)=x_i$ is $(c_1,\kappa)$-continuous. Such
a function is also called a {\em 
$\kappa$-path of length $m$ from $x$ to $y$}.

\subsection{Digital homotopy}
A homotopy between continuous functions may be thought of as
a continuous deformation of one of the functions into the 
other over a finite time period.

\begin{definition}{\rm (\cite{Boxer99}; see also \cite{Khalimsky})}
\label{htpy-2nd-def}
Let $(X,\kappa)$ and $(Y,\kappa')$ be digital images.
Let $f,g: X \rightarrow Y$ be $(\kappa,\kappa')$-continuous functions.
Suppose there is a positive integer $m$ and a function
$F: X \times [0,m]_{{\Z}} \rightarrow Y$
such that

\begin{itemize}
\item for all $x \in X$, $F(x,0) = f(x)$ and $F(x,m) = g(x)$;
\item for all $x \in X$, the induced function
      $F_x: [0,m]_{{\Z}} \rightarrow Y$ defined by
          \[ F_x(t) ~=~ F(x,t) \mbox{ for all } t \in [0,m]_{{\Z}} \]
          is $(2,\kappa')-$continuous. That is, $F_x(t)$ is a path in $Y$.
\item for all $t \in [0,m]_{{\Z}}$, the induced function
         $F_t: X \rightarrow Y$ defined by
          \[ F_t(x) ~=~ F(x,t) \mbox{ for all } x \in  X \]
          is $(\kappa,\kappa')-$continuous.
\end{itemize}
Then $F$ is a {\rm digital $(\kappa,\kappa')-$homotopy between} $f$ and
$g$, and $f$ and $g$ are {\rm digitally $(\kappa,\kappa')-$homotopic in} $Y$.
If for some $x_0 \in X$ we have $F(x_0,t)=F(x_0,0)$ for all
$t \in [0,m]_{{\Z}}$, we say $F$ {\rm holds $x_0$ fixed}, and $F$ is a
{\rm pointed homotopy}.
$\Box$
\end{definition}

We denote a pair of homotopic functions as
described above by $f \simeq_{\kappa,\kappa'} g$.
When the adjacency relations $\kappa$ and $\kappa'$ are understood in context,
we say $f$ and $g$ are {\em digitally homotopic} (or just {\em homotopic})
to abbreviate ``digitally 
$(\kappa,\kappa')-$homotopic in $Y$," and write
$f \simeq g$.

\begin{prop}
\label{htpy-equiv-rel}
{\rm ~\cite{Khalimsky,Boxer99}}
Digital homotopy is an equivalence relation among
digitally continuous functions $f: X \rightarrow Y$.
$\Box$
\end{prop}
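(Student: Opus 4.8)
The plan is to verify the three defining properties of an equivalence relation---reflexivity, symmetry, and transitivity---in each case exhibiting an explicit homotopy and checking the three clauses of Definition~\ref{htpy-2nd-def}. For \emph{reflexivity}, given a continuous $f:(X,\kappa)\to(Y,\kappa')$ I would take $m=1$ and use the stationary homotopy $F(x,t)=f(x)$ for all $t\in[0,1]_{\Z}$. Then $F(x,0)=F(x,1)=f(x)$; each slice $F_t=f$ is continuous by hypothesis; and each track $F_x$ is a constant map, hence continuous by Example~\ref{const-exl}. Thus $f\simeq_{\kappa,\kappa'}f$.

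For \emph{symmetry}, suppose $F:X\times[0,m]_{\Z}\to Y$ is a homotopy from $f$ to $g$. I would reverse time by setting $G(x,t)=F(x,m-t)$. The endpoint conditions then swap, giving $G(x,0)=g(x)$ and $G(x,m)=f(x)$. Each slice $G_t=F_{m-t}$ is continuous, and each track $G_x(t)=F_x(m-t)$ is the composite of $F_x$ with the reflection $t\mapsto m-t$ of $[0,m]_{\Z}$; since this reflection is a $(c_1,c_1)$-continuous self-map, Theorem~\ref{composition} shows that $G_x$ is again a path. Hence $g\simeq_{\kappa,\kappa'}f$.

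For \emph{transitivity}, suppose $F:X\times[0,m]_{\Z}\to Y$ runs from $f$ to $g$ and $G:X\times[0,n]_{\Z}\to Y$ runs from $g$ to $h$. I would concatenate them, defining $H:X\times[0,m+n]_{\Z}\to Y$ by $H(x,t)=F(x,t)$ for $0\le t\le m$ and $H(x,t)=G(x,t-m)$ for $m\le t\le m+n$. This is well defined at the seam $t=m$ because $F(x,m)=g(x)=G(x,0)$, and it satisfies $H(x,0)=f(x)$ and $H(x,m+n)=h(x)$. Each slice $H_t$ coincides with a slice of $F$ or of $G$ and so is continuous.

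The step I expect to demand the most care is the continuity of the tracks $H_x:[0,m+n]_{\Z}\to Y$, i.e.\ showing that a concatenation of two paths is a path. Using the adjacency formulation of continuity (Theorem~\ref{cont-by-adj}), it suffices to check that $H_x(t)$ and $H_x(t+1)$ are equal or $\kappa'$-adjacent for each consecutive pair in $[0,m+n]_{\Z}$. Every such pair lies entirely in $[0,m]_{\Z}$ or entirely in $[m,m+n]_{\Z}$: the pair $(m-1,m)$ is governed by $F_x$ and the pair $(m,m+1)$ by $G_x$, with the common value $H_x(m)=g(x)$ making the two descriptions agree. The required adjacency is therefore inherited from the fact that $F_x$ and $G_x$ are themselves paths, and it follows that $H$ is a homotopy, so $f\simeq_{\kappa,\kappa'}h$.
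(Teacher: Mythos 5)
Your proposal is correct and is essentially the standard argument: the paper states this proposition without proof, citing \cite{Khalimsky,Boxer99}, and the classical proof given there proceeds exactly as you do, via the stationary homotopy for reflexivity, time reversal for symmetry, and concatenation of homotopies for transitivity. Your verification of the track continuity at the seam (every $c_1$-adjacent pair $(t,t+1)$ in $[0,m+n]_{\Z}$ lies wholly in one of the two pieces, with the common value $g(x)$ at $t=m$) is precisely the point that needs care, and you handle it correctly using Theorem~\ref{cont-by-adj}.
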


%Further, composition preserves homotopy:

%\begin{Proposition}
%\label{compose-htpy}
%{\rm ~\cite{Boxer99}}
%If $f_0, f_1: X \rightarrow Y$ are
%$(\kappa,\lambda)-$continuous functions with $f_0~\simeq_{\kappa,\lambda}~f_1$
% and $g_0, g_1: Y \rightarrow {\Z}$ are
%$(\lambda,\mu)-$continuous functions with
%$g_0~\simeq_{\lambda,\mu}~g_1$, then
%$g_0 \circ f_0~\simeq_{\kappa,\mu}~g_1 \circ f_1$.
%$\Box$
%\end{Proposition}

\begin{definition}
{\rm ~\cite{Boxer05}}
\label{htpy-type}
Let $f: X \rightarrow Y$ be a $(\kappa,\kappa')$-continuous function and let
$g: Y \rightarrow X$ be a $(\kappa',\kappa)$-continuous function such that
\[ f \circ g \simeq_{\kappa',\kappa'} 1_X \mbox{ and }
   g \circ f \simeq_{\kappa,\kappa} 1_Y. \]
Then we say $X$ and $Y$ have the {\rm same $(\kappa,\kappa')$-homotopy type}
and that $X$ and $Y$ are $(\kappa,\kappa')$-{\rm homotopy equivalent}, denoted 
$X \simeq_{\kappa,\kappa'} Y$ or as
$X \simeq Y$ when $\kappa$ and $\kappa'$ are
understood.
If for some $x_0 \in X$ and $y_0 \in Y$ we have
$f(x_0)=y_0$, $g(y_0)=x_0$,
and there exists a homotopy between $f \circ g$
and $1_X$ that holds $x_0$ fixed, and 
a homotopy between $g \circ f$
and $1_Y$ that holds $y_0$ fixed, we say
$(X,x_0,\kappa)$ and $(Y,y_0,\kappa')$ are
{\rm pointed homotopy equivalent} and that $(X,x_0)$ 
and $(Y,y_0)$ have the 
{\rm same pointed homotopy type}, denoted 
$(X,x_0) \simeq_{\kappa,\kappa'} (Y,y_0)$ or as
$(X,x_0) \simeq (Y,y_0)$ when 
$\kappa$ and $\kappa'$ are understood.
$\Box$
\end{definition}

It is easily seen, from 
Proposition~\ref{htpy-equiv-rel}, that having the
same homotopy type (respectively, the same
pointed homotopy type) is an equivalence relation
among digital images (respectively, among pointed
digital images).

\begin{comment}
The paper~\cite{BoSt1} introduces three generalizations of homotopy type,
called \em{homotopic similarity}, \em{long homotopy type}, and {\em real homotopy type}. Each of these
is shown to coincide with homotopy type for finite
digital images. In~\cite{Boxer16a}, it is shown that
each of these is preserved by the $NP_v$-adjacency.

\begin{definition}
\label{htpy-sim-def}
\rm{\cite{BoSt1}}
Let $X$ and $Y$ be digital images.
We say $(X,\kappa)$ and $(Y,\lambda)$
are {\rm homotopically similar}, denoted
$X \simeq_{\kappa,\lambda}^s Y$, if
there exist subsets
$\{X_j\}_{j=1}^{\infty}$ of $X$ and
$\{Y_j\}_{j=1}^{\infty}$ of $Y$ such that:
\begin{itemize}
\item $X = \bigcup_{j=1}^{\infty} X_j$, 
$Y = \bigcup_{j=1}^{\infty} Y_j$, 
and, for all $j$,
$X_j \subset X_{j+1}$,
$Y_j \subset Y_{j+1}$. 
\item There are continuous functions
      $f_j: X_j \rightarrow Y_j$, 
      $g_j: Y_j \rightarrow X_j$ such that
      $g_j \circ f_j \simeq_{\kappa,\kappa} 1_{X_j}$ and
      $f_j \circ g_j \simeq_{\lambda,\lambda} 1_{Y_j}$.
\item For $v \leq w$, 
      $f_w|X_v \simeq_{\kappa,\lambda} f_v$ in $Y_v$ and
      $g_w|Y_v \simeq_{\lambda,\kappa} g_v$ in $X_v$.
\end{itemize}

If all of these homotopies are pointed with respect to
some $x_1 \in X_1$ and $y_1 \in Y_1$,
we say $(X,x_1)$ and $(Y,y_1)$ are 
{\rm pointed homotopically similar},
denoted $(X,x_1) \simeq_{\kappa,\lambda}^s (Y,y_1)$
or $(X,x_1) \simeq^s (Y,y_1)$ when $\kappa$ and
$\lambda$ are understood.
$\Box$
\end{definition}

\todo{Def of long htpy type}
\end{comment}

\subsection{Continuous and connectivity preserving multivalued functions}
Given sets $X$ and $Y$, a \emph{multivalued function} $f:X\to Y$ assigns a subset of $Y$ to each point of $x$. We will  write $f:X \multimap Y$. For $A \subset X$ and a multivalued function $f:X\multimap Y$, let $f(A) = \bigcup_{x \in A} f(x)$. 

\begin{definition}
\label{mildly}
\rm{\cite{Kovalevsky}}
A multivalued function $f:X\multimap Y$ is \emph{connectivity preserving} if $f(A)\subset Y$ is connected whenever $A\subset X$ is connected.
\end{definition}

As is the case with Definition \ref{continuous}, we can reformulate connectivity preservation in terms of adjacencies.

\begin{thm}
\rm{\cite{BoxSta16}}
\label{mildadj}
A multivalued function $f:X \multimap Y$ is connectivity preserving if and only if the following are satisfied:
\begin{itemize}
\item For every $x \in X$, $f(x)$ is a connected subset of $Y$.
\item For any adjacent points $x,x'\in X$, the sets $f(x)$ and $f(x')$ are adjacent. \qed
\end{itemize}
\end{thm}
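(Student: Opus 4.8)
The plan is to prove the two directions of the biconditional separately, treating it as the equivalence between connectivity preservation (Definition~\ref{mildly}) and the conjunction of the two listed conditions.

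For the forward direction I would assume $f:X\multimap Y$ is connectivity preserving and recover the two bullet conditions by feeding $f$ the simplest connected sets. Since a singleton $\{x\}$ is connected, $f(\{x\})=f(x)$ is connected, which is the first condition. For the second, suppose $x\adj x'$; then $\{x,x'\}$ is connected, so $f(\{x,x'\})=f(x)\cup f(x')$ is connected, and I would argue that a connected union of two (nonempty) sets forces them to be adjacent. Concretely, if $f(x)$ and $f(x')$ were not adjacent they would in particular be disjoint, with no point of one equal or adjacent to a point of the other; then any path in $f(x)\cup f(x')$ joining a point of $f(x)$ to a point of $f(x')$ would, at its first exit from $f(x)$, contain a consecutive equal-or-adjacent pair straddling the two sets, contradicting non-adjacency. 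Hence the union could not be connected, establishing the second condition.

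For the reverse direction I would assume both conditions and show $f(A)$ is connected whenever $A\subset X$ is connected. Given $p,q\in f(A)$, choose $a,b\in A$ with $p\in f(a)$ and $q\in f(b)$, and use connectedness of $A$ to produce a sequence $a=a_0,a_1,\ldots,a_m=b$ in $A$ with consecutive terms equal or adjacent. By the first condition each $f(a_i)$ is connected; by the second condition, together with the trivial observation that intersecting sets are adjacent (which covers the case $a_i=a_{i+1}$), consecutive sets $f(a_i)$ and $f(a_{i+1})$ are adjacent. I would then invoke the fact recorded in the Preliminaries that a finite union of connected, consecutively adjacent sets is connected, so $\bigcup_{i=0}^m f(a_i)$ is a connected subset of $f(A)$ containing both $p$ and $q$. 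Since $p$ and $q$ were arbitrary, $f(A)$ is connected.

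I expect the only delicate point to be the implication used in the forward direction, namely that connectedness of $f(x)\cup f(x')$ yields adjacency of $f(x)$ and $f(x')$; everything else reduces either to applying the hypothesis to singletons and two-point sets or to the already-cited gluing lemma for chains of adjacent connected sets. One convention worth fixing at the outset is that the values $f(x)$ are nonempty, since adjacency of sets requires witnessing points in both and would otherwise interact awkwardly with the empty set.
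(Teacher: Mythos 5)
Your proof is correct: necessity follows by applying connectivity preservation to singletons and to two-point connected sets (with your first-exit argument correctly showing that a connected union of two nonempty, non-adjacent sets is impossible), and sufficiency follows by chaining the connected sets $f(a_i)$ along a path in $A$ and invoking the gluing fact from the Preliminaries. Note that this paper states Theorem~\ref{mildadj} without proof, citing \cite{BoxSta16}; your argument is essentially the standard one given in that source, and your observation that the values $f(x)$ must be taken nonempty is the right convention to make explicit.
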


Definition~\ref{mildly} is related to a definition of multivalued continuity for subsets of $\Z^n$ given and explored by Escribano, Giraldo, and Sastre in \cite{egs08, egs12} based on subdivisions. (These papers make a small error with respect to compositions, that is corrected in \cite{gs15}.) Their definitions are as follows:
\begin{definition}
For any positive integer $r$, the \emph{$r$-th subdivision} of $\Z^n$ is
\[ \Z_r^n = \{ (z_1/r, \dots, z_n/r) \mid z_i \in \Z \}. \]
An adjacency relation $\kappa$ on $\Z^n$ naturally induces an adjacency relation (which we also call $\kappa$) on $\Z_r^n$ as follows: $(z_1/r, \dots, z_n/r), (z_1'/r, \dots, z_n'/r)$ are adjacent in $\Z^n_r$ if and only if $(z_1, \dots, z_n)$ and $(z_1', \dots, z_n')$ are adjacent in $\Z^n$.

Given a digital image $(X,\kappa) \subset (\Z^n,\kappa)$, the \emph{$r$-th subdivision} of $X$ is 
\[ S(X,r) = \{ (x_1,\dots, x_n) \in \Z^n_r \mid (\lfloor x_1 \rfloor, \dots, \lfloor x_n \rfloor) \in X \}. \]

Let $E_r:S(X,r) \to X$ be the natural map sending $(x_1,\dots,x_n) \in S(X,r)$ to $(\lfloor x_1 \rfloor, \dots, \lfloor x_n \rfloor)$. \qed
\end{definition}

\begin{definition}
For a digital image $(X,\kappa) \subset (\Z^n,\kappa)$, a function $f:S(X,r) \to Y$ \emph{induces a multivalued function $F:X\multimap Y$} if $x \in X$ implies
\[ F(x) = \bigcup_{x' \in E^{-1}_r(x)} \{f(x')\}. \qed \]
\end{definition}

\begin{definition}
\label{multi-cont}
A multivalued function $F:X\multimap Y$ is called \emph{continuous} when there is some $r$ such that $F$ is induced by some single valued continuous function $f:S(X,r) \to Y$. 
\qed
\end{definition}

\begin{comment}
\begin{figure}
\begin{center}
\begin{tabular}{cccc}
\begin{tikzpicture}[scale=.4]
\foreach \x/\y in {1/0,0/1} {
	\filldraw[fill=gray, xshift=2*\x cm,yshift=2*\y cm]
		(45:1.2) \foreach \t in {135,225,315,45} { -- (\t:1.2) };
}
\end{tikzpicture}\qquad
&
\begin{tikzpicture}[scale=.2]
\foreach \x/\y in {2/0,2/1,3/0,3/1,0/2,0/3,1/2,1/3} {
	\filldraw[fill=gray, xshift=2*\x cm,yshift=2*\y cm]
		(45:1.2) \foreach \t in {135,225,315,45} { -- (\t:1.2) };
}
\end{tikzpicture}\qquad
&
\begin{tikzpicture}[scale=.4]
\foreach \x/\y in {0/0,0/1} {
	\filldraw[fill=gray, xshift=2*\x cm,yshift=2*\y cm]
		(45:1.2) \foreach \t in {135,225,315,45} { -- (\t:1.2) };
}
\end{tikzpicture}\qquad
&
\begin{tikzpicture}[scale=.2]
\foreach \x/\y in {0/0,0/1,1/0,1/1,0/2,0/3,1/2,1/3} {
	\filldraw[fill=gray, xshift=2*\x cm,yshift=2*\y cm]
		(45:1.2) \foreach \t in {135,225,315,45} { -- (\t:1.2) };
}
\end{tikzpicture}
\\
$X$ & $S(X,2)$ & $Y$ & $S(Y,2)$
\end{tabular}
\end{comment}
\begin{figure}
\includegraphics[height=1.25in]{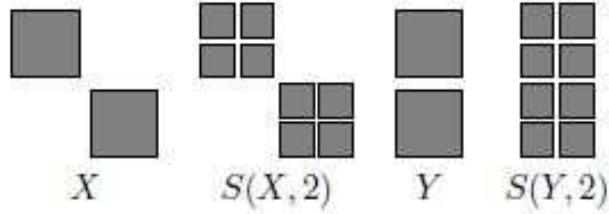}
\begin{center}
\caption{\cite{BoxSta16} Two images $X$ and $Y$ with their second subdivisions. (Subdivisions are drawn at half-scale.)
\label{subdivfig}}
\end{center}
\end{figure}

Note~\cite{BoxSta16} that the subdivision construction (and thus the notion of continuity) depends on the particular embedding of $X$ as a subset of $\Z^n$. In particular we may have $X, Y \subset \Z^n$ with $X$ isomorphic to $Y$ but $S(X,r)$ not isomorphic to $S(Y,r)$. 
E.g., in Figure~\ref{subdivfig}, when we use 8-adjacency for all images, $X$ and $Y$ are isomorphic, each being a set of two adjacent points, but $S(X,2)$ and $S(Y,2)$ are not isomorphic since $S(X,2)$ can be disconnected by removing a single point, while this is impossible in $S(Y,2)$. 

The definition of connectivity preservation makes no reference to $X$ as being embedded inside of any particular integer lattice $\Z^n$.

\begin{prop}
\label{pt-images-connected}
\rm{\cite{egs08,egs12}}
Let $F:X\multimap Y$ be a continuous multivalued function
between digital images. Then
\begin{itemize}
\item for all $x \in X$, $F(x)$ is connected; and
\item for all connected subsets $A$ of $X$, $F(A)$ is connected.
\qed
\end{itemize}
\end{prop}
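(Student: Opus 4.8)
The plan is to prove Proposition~\ref{pt-images-connected} by reducing multivalued continuity to the combinatorial characterization of connectivity preservation supplied by Theorem~\ref{mildadj}. That is, I would show directly that a continuous multivalued function $F:X\multimap Y$ (continuous in the sense of Definition~\ref{multi-cont}, via a subdivision) is automatically connectivity preserving in the sense of Definition~\ref{mildly}; the two bulleted conclusions then follow immediately, the first being verbatim Definition~\ref{mildly} and the second being the defining property of connectivity preservation.

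First I would fix $r$ and a single-valued $(\kappa,\lambda)$-continuous map $f:S(X,r)\to Y$ inducing $F$, so that $F(x)=f(E_r^{-1}(x))$ for each $x\in X$. The crucial geometric fact I would establish is that for each $x\in X$ the fiber $E_r^{-1}(x)$ is a connected subset of $S(X,r)$: it is exactly the set of points of $\Z_r^n$ whose floor equals $x$, i.e.\ a small grid cube, which is connected in the induced adjacency. Since $f$ is continuous, $F(x)=f(E_r^{-1}(x))$ is then connected by Definition~\ref{continuous}, giving the first bullet. For the second bullet of Theorem~\ref{mildadj}, I would take adjacent $x,x'\in X$ and argue that their fibers $E_r^{-1}(x)$ and $E_r^{-1}(x')$ are adjacent subsets of $S(X,r)$ (their closure-cubes abut), so there are adjacent points $s\in E_r^{-1}(x)$, $s'\in E_r^{-1}(x')$; continuity of $f$ makes $f(s),f(s')$ equal or adjacent, whence $F(x)$ and $F(x')$ are adjacent. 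With both bullets verified, Theorem~\ref{mildadj} yields that $F$ is connectivity preserving, and Definition~\ref{mildly} then gives that $F(A)$ is connected for every connected $A\subset X$.

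I would assemble the second conclusion cleanly as follows: given connected $A\subset X$, write $A=\{a_j\}$ and observe $F(A)=\bigcup_{x\in A}F(x)$ is a union of connected sets (first bullet) that is ``chained'' by adjacency along a connecting path in $A$ (second bullet), so by the remark in the Connectedness subsection that a finite union of connected, pairwise-adjacent-along-a-path sets is connected, $F(A)$ is connected. Strictly, for infinite $A$ one covers any two points of $F(A)$ by the images of finitely many points of $A$ lying on a path between their preimages, reducing to the finite case.

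The main obstacle, and the step deserving the most care, is the claim that the fibers $E_r^{-1}(x)$ are connected and that fibers over adjacent points are adjacent, since these are exactly the places where the particular adjacency $\kappa$ and the subdivision's induced adjacency interact. The subtlety is genuine: as the paper emphasizes just before the proposition, subdivision depends on the embedding and can change connectivity properties, so I would need to argue at the level of the induced adjacency on $\Z_r^n$ rather than appeal to any intrinsic isomorphism. Concretely I expect to verify that for $c_u$-type adjacencies a unit cell and its neighboring cells behave as required; handling a fully general $\kappa$ would require assuming only the structural features of the induced adjacency actually used, and I would flag that dependence explicitly rather than claim more generality than the argument supports.
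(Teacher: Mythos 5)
Your proof is correct, but note the point of comparison: the paper offers no proof of Proposition~\ref{pt-images-connected} at all --- it is quoted from \cite{egs08,egs12}, stated with a \qed, and the same goes for the neighboring Theorem~\ref{cont-hierarchy}, which is quoted from \cite{BoxSta16}. What you have written is, in effect, a self-contained re-derivation of Theorem~\ref{cont-hierarchy} (continuity of a multivalued function implies connectivity preservation), from which the proposition follows instantly via Definition~\ref{mildly} and Theorem~\ref{mildadj}; this is precisely the logical route the cited literature takes, so your argument is the standard one rather than a novel alternative. Your two key lemmas are the right crux and both check out for the $c_u$-adjacencies that the subdivision machinery is built around: the fiber $E_r^{-1}(x)$ is the scaled integer box $\prod_i [rx_i, rx_i+r-1]$, which is $c_1$-connected and hence $\kappa$-connected for any $c_u$-adjacency $\kappa$; and for $x \adj_{c_u} x'$ one exhibits adjacent fiber points explicitly by taking $z_i = rx_i + r - 1$, $z_i' = rx_i'$ in each coordinate where $x_i' = x_i + 1$ (symmetrically where $x_i' = x_i - 1$) and $z_i = z_i'$ elsewhere, so the abutment claim is combinatorial and not merely Euclidean intuition. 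Your caution about arbitrary adjacencies $\kappa$ on $\Z^n$ is warranted and well placed --- the fiber-connectivity step genuinely uses the $c_u$ structure, and this is the implicit setting of Definition~\ref{multi-cont} in the paper (compare Lemma~\ref{gcm-subdiv}, which is stated only for $c_a$, $c_b$). The one small refinement worth making explicit in the chaining step for the second bullet: given a path $a_0 \adj \dots \adj a_m$ in $A$, consecutive images $F(a_j)$, $F(a_{j+1})$ are adjacent by your second lemma, so the finite union $\bigcup_{j=0}^{m} F(a_j)$ is connected by the remark in the Connectedness subsection, and your reduction of the infinite case to such finite paths is exactly right.
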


\begin{thm}
\label{cont-hierarchy}
\rm{\cite{BoxSta16}}
For $(X,\kappa) \subset (\Z^n,\kappa)$, if $F:X\multimap Y$ 
is a continuous multivalued function, then $F$ is connectivity preserving. \qed
\end{thm}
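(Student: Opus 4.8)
The plan is to verify the adjacency-based characterization of connectivity preservation given in Theorem~\ref{mildadj}, drawing the needed connectivity facts from Proposition~\ref{pt-images-connected}. Since $F$ is continuous, Proposition~\ref{pt-images-connected} applies, and it supplies exactly the two ingredients we need: that each point-image $F(x)$ is connected, and that $F(A)$ is connected for every connected $A \subset X$. By Theorem~\ref{mildadj}, to conclude that $F$ is connectivity preserving it then suffices to check (i) that $F(x)$ is connected for each $x \in X$, and (ii) that $F(x)$ and $F(x')$ are adjacent whenever $x \adj_{\kappa} x'$.

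Condition (i) is immediate: it is precisely the first assertion of Proposition~\ref{pt-images-connected}. For condition (ii), I would fix adjacent points $x, x' \in X$. Then $\{x,x'\}$ is a connected subset of $X$, so by the second assertion of Proposition~\ref{pt-images-connected} the set $F(\{x,x'\}) = F(x) \cup F(x')$ is connected. Each of $F(x)$ and $F(x')$ is nonempty---indeed $x \in E_r^{-1}(x)$, so $f(x) \in F(x)$---and each is connected by (i). I would then invoke the elementary fact that connectedness of a union of two nonempty subsets forces the pieces to be adjacent: if $F(x)$ and $F(x')$ were not adjacent, then no point of one would be equal or adjacent to a point of the other, so the decomposition $F(x) \cup F(x')$ would exhibit $F(x) \cup F(x')$ as disconnected, a contradiction. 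Hence $F(x)$ and $F(x')$ are adjacent, giving (ii).

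The argument is genuinely short because all of the substantive work is already encapsulated in Proposition~\ref{pt-images-connected}; in fact its second bullet is verbatim the definition of connectivity preservation (Definition~\ref{mildly}), so the theorem could even be quoted directly from it. The only place where the hypothesis $(X,\kappa) \subset (\Z^n,\kappa)$ enters is through that proposition, since the notion of a continuous multivalued function depends on the embedding via subdivisions whereas connectivity preservation does not. The one small step I must not skip is the converse of the observation (made in the subsection on connectedness) that a finite union of connected adjacent sets is connected---namely that connectedness of the union forces adjacency of the pieces. This is the only real obstacle, and it is routine; I expect no serious difficulty beyond this bookkeeping.
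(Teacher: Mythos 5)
Your proposal is correct, and as you observe yourself in the closing paragraph, the theorem is actually immediate: the second bullet of Proposition~\ref{pt-images-connected} asserts verbatim that $F(A)$ is connected for every connected $A\subset X$, which is precisely Definition~\ref{mildly}, and this one-line derivation is all the paper (which states the result with a citation to \cite{BoxSta16} and no written proof) intends. Your main argument, routing through the adjacency characterization of Theorem~\ref{mildadj}, is a valid but redundant detour; its only nontrivial step --- that connectedness of $F(x)\cup F(x')$ forces $F(x)$ and $F(x')$ to be adjacent --- is handled correctly, since non-adjacent sets are in particular disjoint with no equal-or-adjacent pair between them, so any path in the union starting in $F(x)$ can never cross into the nonempty set $F(x')$. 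In short: the proof is right, and your parenthetical shortcut, not the detour, is the paper's argument.
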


The subdivision machinery often makes it difficult to prove that a given multivalued function is continuous. By contrast, many maps can easily be shown to be connectivity preserving. 

\begin{comment}
\begin{prop}
\label{1-to-all}
\rm{\cite{BoxSta16}}
Let $X$ and $Y$ be digital images.
Suppose $Y$ is connected. Then the
multivalued function $f: X \multimap Y$ defined by
$f(x)=Y$ for all $x \in X$ is connectivity preserving. \qed
\end{prop}

\begin{prop}
\label{finite-to-infinite}
\rm{\cite{BoxSta16}}
Let $F: X \multimap Y$ be a multivalued
surjection between digital images 
$X,Y\subset (\Z^n, \kappa)$. If $X$ is finite and $Y$
is infinite, then $F$ is not $(\kappa,\kappa)$-continuous. \qed
\end{prop}

\begin{cor}
\rm{\cite{BoxSta16}}
Let $F: X \multimap Y$ be the
multivalued function
between digital images defined by
$F(x)=Y$ for all $x \in X$. If $X$ is finite and $Y$
is infinite and connected, then
$F$ is connectivity preserving but not continuous. \qed
\end{cor}

Examples of connectivity preserving but not continuous multivalued functions on finite spaces are given in \cite{BoxSta16}.
\end{comment}

\subsection{Other notions of multivalued continuity}
Other notions of continuity have been given
for multivalued functions between graphs (equivalently,
between digital images). We have the following.

\begin{definition}
\rm{~\cite{Tsaur}}
\label{Tsaur-def}
Let $F: X \multimap Y$ be a multivalued function between
digital images.
\begin{itemize}
\item $F$ has {\em weak continuity} if for each pair of
      adjacent $x,y \in X$, $f(x)$ and $f(y)$ are adjacent
      subsets of $Y$.
\item $F$ has {\em strong continuity} if for each pair of
      adjacent $x,y \in X$, every point of $f(x)$ is adjacent
      or equal to some point of $f(y)$ and every point of 
      $f(y)$ is adjacent or equal to some point of $f(x)$.
     \qed
\end{itemize}
\end{definition}

\begin{prop}
\label{mild-and-weak}
\rm{\cite{BoxSta16}}
Let $F: X \multimap Y$ be a multivalued function between
digital images. Then $F$ is connectivity preserving if and
only if $F$ has weak continuity and for all $x \in X$,
$F(x)$ is connected. \qed
\end{prop}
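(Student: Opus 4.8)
The plan is to obtain this proposition as an immediate corollary of Theorem~\ref{mildadj}, observing that the two conditions appearing there are, after unwinding definitions, exactly the two conditions on the right-hand side of the stated equivalence. Theorem~\ref{mildadj} already asserts that $F$ is connectivity preserving if and only if (i)~$F(x)$ is connected for every $x \in X$, and (ii)~for every pair of adjacent points $x, x' \in X$ the sets $F(x)$ and $F(x')$ are adjacent. The first condition is verbatim one of the hypotheses in our proposition, so the only work is to identify condition (ii) with weak continuity.

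To do this I would simply recall Definition~\ref{Tsaur-def}: the function $F$ has \emph{weak continuity} precisely when, for each pair of adjacent $x, y \in X$, the sets $F(x)$ and $F(y)$ are adjacent subsets of $Y$. This is literally condition (ii) of Theorem~\ref{mildadj}. Substituting ``weak continuity'' for condition (ii) in the characterization of Theorem~\ref{mildadj} then yields the claimed equivalence directly, with no further argument required in either direction.

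The only point demanding any care is consistency in the meaning of ``adjacent subsets.'' Both Theorem~\ref{mildadj} and Definition~\ref{Tsaur-def} invoke the notion of adjacency of two subsets $A, B \subset Y$ introduced in the Connectedness subsection, namely that there exist $a \in A$ and $b \in B$ that are equal or adjacent; since this is the same definition in both places, no ambiguity arises. Consequently I expect no genuine obstacle: the proposition is essentially a bookkeeping restatement of Theorem~\ref{mildadj} in the terminology of Definition~\ref{Tsaur-def}, and the proof reduces to pointing out that the second bullet of Theorem~\ref{mildadj} is exactly the defining property of weak continuity.
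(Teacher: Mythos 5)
Your proposal is correct, and it is the right derivation: the paper states this proposition without proof (quoting \cite{BoxSta16}), and within the paper's logical structure it does follow immediately from Theorem~\ref{mildadj} once one observes, as you do, that the second bullet of that theorem is verbatim the definition of weak continuity in Definition~\ref{Tsaur-def}, with ``adjacent subsets'' meaning the same thing in both places. No gap; your two-sentence substitution argument is the whole proof.
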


\begin{exl}
\rm{\cite{BoxSta16}}
\label{pt-images-discon}
If $F: [0,1]_{\Z} \multimap [0,2]_{\Z}$ is defined by
$F(0)=\{0,2\}$, $F(1)=\{1\}$, then $F$ has both weak and
strong continuity. Thus a multivalued function between
digital images that has weak or strong continuity need not
have connected point-images. By Theorem~\ref{mildadj} and
Proposition~\ref{pt-images-connected} it
follows that neither having weak continuity nor having
strong continuity implies that a multivalued function is
connectivity preserving or continuous.
$\Box$
\end{exl}

\begin{exl}
\rm{\cite{BoxSta16}}
Let $F: [0,1]_{\Z} \multimap [0,2]_{\Z}$ be defined by
$F(0)=\{0,1\}$, $F(1)=\{2\}$. Then $F$ is continuous and
has weak continuity but
does not have strong continuity. $\Box$
\end{exl}

\begin{prop}
\rm{\cite{BoxSta16}}
Let $F: X \multimap Y$ be a multivalued function between
digital images. If $F$ has strong continuity and for
each $x \in X$, $F(x)$ is connected, then $F$ is
connectivity preserving. \qed
\end{prop}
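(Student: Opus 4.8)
The plan is to reduce strong continuity to weak continuity and then invoke a characterization already at hand. By Proposition~\ref{mild-and-weak}, a multivalued function with connected point-images is connectivity preserving precisely when it has weak continuity. Since the hypothesis already supplies that $F(x)$ is connected for every $x \in X$, it suffices to prove that strong continuity implies weak continuity; the conclusion then follows immediately.

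To establish weak continuity, I would take an arbitrary pair of adjacent points $x, x' \in X$ and show that $F(x)$ and $F(x')$ are adjacent subsets, i.e.\ that they contain a pair of points that are equal or adjacent. First note that connectedness of $F(x)$ forces $F(x)$ to be nonempty, so I may choose some $a \in F(x)$. By the definition of strong continuity (Definition~\ref{Tsaur-def}), every point of $F(x)$ is equal or adjacent to some point of $F(x')$; applying this to $a$ produces a point $b \in F(x')$ with $a$ equal or adjacent to $b$. This witnessing pair is exactly what is required for $F(x)$ and $F(x')$ to be adjacent, so weak continuity holds.

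Combining the two ingredients—connected point-images (from the hypothesis) and weak continuity (just verified)—Proposition~\ref{mild-and-weak} yields that $F$ is connectivity preserving. Alternatively, one could appeal directly to Theorem~\ref{mildadj}, whose two bullet conditions are verified by precisely the same two observations.

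I do not expect a genuine obstacle here: strong continuity is a strictly stronger adjacency condition than weak continuity, and the only point worth remarking is that just one half of the strong-continuity requirement (every point of $F(x)$ near some point of $F(x')$) already produces a single equal-or-adjacent pair, which is all that set-adjacency demands. The substantive work in this circle of results was carried out in establishing Theorem~\ref{mildadj} and Proposition~\ref{mild-and-weak}; here the proof amounts to assembling those pieces, with the nonemptiness of $F(x)$ being the only detail one must not overlook.
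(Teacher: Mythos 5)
Your argument is correct and is essentially the intended one: the paper states this proposition without proof (it is quoted from \cite{BoxSta16}), and your route---extracting weak continuity from one half of the strong-continuity condition via a single witnessing pair, then invoking Proposition~\ref{mild-and-weak} (equivalently, checking the two conditions of Theorem~\ref{mildadj} directly)---is exactly the standard argument. One small correction: under the paper's stated definition of connectedness the empty set is vacuously connected, so nonemptiness of $F(x)$ is not a consequence of connectedness but of the standing convention that a multivalued function assigns a nonempty subset to each point; with that convention noted, your proof is complete.
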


The following shows that not requiring the image of a
point $F(p)$ to be connected can yield topologically unsatisfying 
consequences for weak and strong continuity.

\begin{exl}
\rm{\cite{BoxSta16}}
Let $X$ and $Y$ be nonempty digital images. Let
the multivalued function $f: X \multimap Y$ be defined by
$f(x)=Y$ for all $x \in X$.
\begin{itemize}
\item $f$ has both weak and strong continuity.
\item $f$ is connectivity preserving if and only if $Y$ is
      connected. \qed
\end{itemize}
\end{exl}

As a specific example~\cite{BoxSta16} consider $X= \{0\} \subset \Z$ and $Y = \{0,2\}$, all with $c_1$ adjacency. Then the function $F:X \multimap Y$ with $F(0) = Y$ has both weak and strong continuity, even though it maps a connected image surjectively onto a disconnected image.

\subsection{Shy maps and their inverses}
\begin{definition}
\label{shy-def}
\cite{Boxer05}
Let $f: X \rightarrow Y$ be a
continuous surjection of digital images. We say $f$ is
{\em shy} if
\begin{itemize}
\item for each $y \in Y$, $f^{-1}(y)$ is connected, and
\item for every $y_0,y_1 \in Y$ such that $y_0$ and $y_1$ are
      adjacent, $f^{-1}(\{y_0,y_1\})$ is
      connected. \qed
\end{itemize}
\end{definition}

Shy maps induce surjections on fundamental groups
~\cite{Boxer05}.
Some relationships between shy maps $f$ and their inverses
$f^{-1}$ as multivalued functions were studied in
~\cite{Boxer14,BoxSta16,Boxer16}.
Shyness as a factor or product property for the normal product adjacency was studied in~\cite{Boxer16a}.
We have the following.

\begin{thm}
\label{shy-thm}
\rm{\cite{BoxSta16,Boxer16}}
Let $f: X \to Y$ be a 
continuous surjection between digital images.
Then the following are equivalent.
\begin{itemize}
\item $f$ is a shy map.
\item For every connected $Y_0 \subset Y$ , $f^{-1}(Y_0)$
      is a connected subset of $X$.
\item $f^{-1}: Y \multimap X$ is a connectivity preserving multi-valued function.
\item $f^{-1}: Y \multimap X$ is a multi-valued function with weak continuity   
      such that for all $y \in Y$, $f^{-1}(y)$ is a connected subset of $X$. 
      \qed
\end{itemize}
\end{thm}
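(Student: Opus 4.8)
The plan is to label the four assertions as $(1)$ $f$ is shy, $(2)$ $f^{-1}(Y_0)$ is connected for every connected $Y_0\subset Y$, $(3)$ $f^{-1}\colon Y\multimap X$ is connectivity preserving, and $(4)$ $f^{-1}$ has weak continuity with every point-image $f^{-1}(y)$ connected. Most of these are linked by definitions or by results already in hand, so I would first clear away the easy equivalences and isolate the one substantive implication. Statements $(2)$ and $(3)$ coincide after unwinding Definition~\ref{mildly} for the multivalued function $f^{-1}$: here $f^{-1}(Y_0)=\bigcup_{y\in Y_0}f^{-1}(y)$ is exactly the image of $Y_0$ under $f^{-1}$, so ``$f^{-1}(Y_0)$ connected whenever $Y_0$ is connected'' is precisely connectivity preservation. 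Statement $(3)\Leftrightarrow(4)$ is Proposition~\ref{mild-and-weak} applied to $f^{-1}\colon Y\multimap X$, which says connectivity preservation is equivalent to weak continuity together with connectedness of each point-image. Finally $(2)\Rightarrow(1)$ is immediate by feeding the right test sets into $(2)$: a singleton $\{y\}$ is connected, so $f^{-1}(y)$ is connected; and for adjacent $y_0\adj y_1$ the set $\{y_0,y_1\}$ is connected, so $f^{-1}(\{y_0,y_1\})$ is connected. These two conclusions are exactly the clauses of Definition~\ref{shy-def}. Thus everything reduces to proving $(1)\Rightarrow(2)$.

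For $(1)\Rightarrow(2)$, I would assume shyness, fix a connected $Y_0\subset Y$, and take arbitrary $a,b\in f^{-1}(Y_0)$, aiming to join them by a path inside $f^{-1}(Y_0)$. Since $f(a),f(b)\in Y_0$ and $Y_0$ is connected, there is a path $f(a)=y_0,y_1,\ldots,y_m=f(b)$ in $Y_0$ with consecutive terms equal or adjacent. Writing $S_i:=f^{-1}(y_i)$, each $S_i$ is nonempty because $f$ is surjective and connected by the first clause of shyness, while each consecutive union $S_i\cup S_{i+1}=f^{-1}(\{y_i,y_{i+1}\})$ is connected by the second clause of shyness (or trivially when $y_i=y_{i+1}$). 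I would then glue the chain by induction on the number of terms: assuming $\bigcup_{i\le k}S_i$ is connected, I extract from connectedness of $S_k\cup S_{k+1}$ that $S_k$ and $S_{k+1}$ are adjacent sets, so $\bigcup_{i\le k}S_i$ and $S_{k+1}$ are adjacent connected sets, whose union is connected by the observation in the Connectedness subsection that a finite union of connected adjacent sets is connected. The resulting set $\bigcup_{i=0}^m S_i$ is a connected subset of $f^{-1}(Y_0)$ containing both $a$ and $b$, which yields the desired path; as $a,b$ were arbitrary, $f^{-1}(Y_0)$ is connected.

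The main obstacle I anticipate is the inductive gluing step, and in particular the small lemma buried inside it: from the bare statement that $S_k\cup S_{k+1}$ is connected one must deduce that $S_k$ and $S_{k+1}$ are \emph{adjacent} as sets, so that the union-of-adjacent-connected-sets fact actually applies. The argument is that any path in $S_k\cup S_{k+1}$ from a point of $S_k$ to a point of $S_{k+1}$ must at some step pass from $S_k$ to $S_{k+1}$, producing equal-or-adjacent witnesses; this is exactly where nonemptiness of every preimage, guaranteed by surjectivity of $f$, is indispensable, since an empty $S_i$ would sever the chain. Everything else is bookkeeping, and continuity of $f$ is not needed for the equivalence beyond ensuring $f^{-1}$ is a genuine multivalued function on all of $Y$.
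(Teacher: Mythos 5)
Your proposal is correct, and in fact the paper offers no internal proof to compare it against: Theorem~\ref{shy-thm} is stated with a citation to \cite{BoxSta16,Boxer16} and a \emph{qed} mark, so the argument lives in those references. Your reconstruction follows the standard route used there: the equivalence of the second and third items is purely definitional (unwinding Definition~\ref{mildly} for $f^{-1}$), the equivalence of the third and fourth items is exactly Proposition~\ref{mild-and-weak} applied to $f^{-1}$, the implication from the second item to shyness is obtained by testing on singletons and adjacent pairs, and the substantive direction is the chain-gluing argument you give, with each link $f^{-1}(\{y_i,y_{i+1}\})$ connected by shyness and nonempty by surjectivity. One small simplification is available in your inductive step: you do not need the ``crossing'' lemma extracting adjacency of $S_k$ and $S_{k+1}$ from connectedness of their union. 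Since $S_k$ is nonempty and $S_k \subset T_k \cap f^{-1}(\{y_k,y_{k+1}\})$, the connected sets $T_k$ and $f^{-1}(\{y_k,y_{k+1}\})$ intersect, hence are automatically adjacent in the paper's sense, and their union $T_{k+1}$ is connected directly by the observation in the Connectedness subsection. Your crossing argument is nevertheless valid (and correctly handles the disjoint case $y_k \neq y_{k+1}$), and your closing remark that continuity of $f$ plays no role beyond the standing hypothesis is accurate.
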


\subsection{Other tools}
Other terminology we use includes the following.
Given a digital image $(X,\kappa) \subset \Z^n$ and $x \in X$, the set of points adjacent to $x \in \Z^n$ and the
neighborhood of $x$ in $\Z^n$
%, and the boundary of $X$ in $\Z^n$
are, respectively,
\[N_{\kappa}(x) = \{y \in \Z^n \, | \, y \mbox{ is }
    \kappa\mbox{-adjacent to }x\},\]
\[N_{\kappa}^*(x) = N_{\kappa}(x) \cup \{x\}.
\]
\begin{comment}
and
\[ \delta_{\kappa}(X)=\{y \in X \, | \, 
    N_{\kappa}(y) \setminus X \neq \emptyset \}.
\]
\end{comment}

\section{Maps on products}
In this section, we consider various product
adjacencies with respect to continuity of functions.

\subsection{General properties}
\begin{definition}
Let $\kappa_1$ and $\kappa_2$ be adjacency
relations on a set $X$. We say
$\kappa_1$ {\em dominates} $\kappa_2$, $\kappa_1 \ge_d \kappa_2$,
or $\kappa_2$ {\em is dominated by} $\kappa_1$, $\kappa_2 \le_d \kappa_1$,
if for $x,x' \in X$, if $x$ and $x'$ are $\kappa_1$-adjacent then $x$ and $x'$ are $\kappa_2$-adjacent.
\end{definition}

\begin{exl}
We have the following comparisons of adjacencies.
\begin{itemize}
\item For $X \subset \Z^n$ and $1 \le u \le v \le n$,
$c_u \ge_d c_v$.
\item For $\Pi_{i=1}^v (X_i,\kappa_i)$ and $1 \le u \le v \le n$,
\[NP_u(\kappa_1,\ldots \kappa_v) \ge_d NP_v(\kappa_1,\ldots \kappa_v).
\]
\item For $\Pi_{i=1}^v (X_i,\kappa_i)$,
$T(\kappa_1,\ldots \kappa_v) \ge_d NP_v(\kappa_1,\ldots \kappa_v)$.
\item For $\Pi_{i=1}^v (X_i,\kappa_i)$, we have:
\begin{itemize}
\item $NP_u(\kappa_1,\ldots,\kappa_v) \ge_d L(\kappa_1,\ldots,\kappa_v)$ for $1 \le u \le v$;
\item $T(\kappa_1,\ldots,\kappa_v) \ge_d L(\kappa_1,\ldots,\kappa_v)$;
\item $\times_{i=1}^v \kappa_i \ge_d L(\kappa_1,\ldots,\kappa_v)$.
\end{itemize}
\end{itemize}
\end{exl}

\begin{proof} These follow
immediately from the
definitions of these adjacencies.
\end{proof}

The next example shows that
there are adjacencies that can
be applied to the same set $X$ such that neither dominates the other.

\begin{exl}
\label{neither-dominates}
In $X=\Z^6 = \Z^3 \times \Z^3$,
neither of $T(c_2,c_2)$ nor
$T(c_1,c_3)$ dominates the other.
\end{exl}

\begin{proof}
Consider the points
$p=(0,0,0,0,0,0)$ and
$q=(1,1,0,1,1,0)$. We have
$p \adj_{T(c_2,c_2)} q$ but
$p$ and $q$ are not $T(c_1,c_3)$-adjacent. Therefore
$T(c_2,c_2)$ does not dominate
$T(c_1,c_3)$.

Now consider $r=(1,0,0,1,1,1)$.
We have $p \adj_{T(c_1,c_3)} r$
but $p$ and $r$ are not
$T(c_2,c_2)$-adjacent. Therefore
$T(c_1,c_3)$ does not dominate
$T(c_2,c_2)$.
\end{proof}

Domination, and being dominated,
are transitive relations among
the adjacencies of a graph. I.e., we have the following.

\begin{prop} Given adjacencies
$\kappa$, $\lambda$, $\mu$ for a graph, if $\kappa \le_d \lambda$ 
and $\lambda \le_d \mu$, then
$\kappa \le_d \mu$.
\end{prop}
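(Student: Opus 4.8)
The plan is entirely formal: I would simply unwind the definition of $\le_d$ into an implication between adjacency relations and then compose the two given implications. The one point requiring care is the direction of that implication, since domination reverses the naive ``larger relation'' reading. By definition $\kappa \le_d \lambda$ asserts that whenever two points are $\lambda$-adjacent they are also $\kappa$-adjacent; that is, $\lambda$-adjacency implies $\kappa$-adjacency. Likewise $\lambda \le_d \mu$ says that $\mu$-adjacency implies $\lambda$-adjacency.

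With both hypotheses rewritten in this explicit implicational form, I would fix an arbitrary pair of points $x, x'$ of the graph and suppose $x \adj_{\mu} x'$. Applying $\lambda \le_d \mu$ yields $x \adj_{\lambda} x'$, and then applying $\kappa \le_d \lambda$ yields $x \adj_{\kappa} x'$. Since $x$ and $x'$ were arbitrary, every $\mu$-adjacent pair is $\kappa$-adjacent, which is exactly the assertion $\kappa \le_d \mu$.

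There is no genuine obstacle here; the argument is a two-step chain of implications with no case analysis and no computation, in the same spirit as the earlier domination examples that ``follow immediately from the definitions.'' The only thing to guard against is misreading the symbol $\le_d$ and chaining the implications in the wrong order, which would produce the reverse containment (false in general). Keeping the implications in the explicit form ``$x \adj_{\mu} x' \Rightarrow x \adj_{\lambda} x' \Rightarrow x \adj_{\kappa} x'$'' makes the correct direction self-evident and completes the proof.
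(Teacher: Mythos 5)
Your proof is correct, and it is precisely the elementary definition-unwinding argument that the paper leaves to the reader (its ``proof'' is simply ``Elementary, and left to the reader''). You also correctly handle the one subtlety, namely that $\kappa \le_d \lambda$ means $\lambda$-adjacency implies $\kappa$-adjacency, so your chain $x \adj_{\mu} x' \Rightarrow x \adj_{\lambda} x' \Rightarrow x \adj_{\kappa} x'$ runs in the right direction and establishes $\kappa \le_d \mu$ exactly as intended.
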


\begin{proof}
Elementary, and left to the reader.
\end{proof}

\begin{prop}
Let $f: X \to Y$ be a function.
\begin{itemize}
\item Let $\lambda_1$ and $\lambda_2$ be adjacency relations on $Y$. If $f$ is $(\kappa, \lambda_1)$ continuous and $\lambda_1 \ge_d \lambda_2$, then $f$ is
$(\kappa, \lambda_2)$ continuous.
\item Let $\kappa_1$ and $\kappa_2$ be adjacency relations on $X$. If $f$ is $(\kappa_1, \lambda)$-continuous and $\kappa_1 \le_d \kappa_2$, then $f$ is
$(\kappa_2, \lambda)$-continuous.
\end{itemize}
\end{prop}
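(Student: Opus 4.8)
The plan is to reduce both parts to the pointwise characterization of continuity given in Theorem~\ref{cont-by-adj}, which replaces the connectedness-based Definition~\ref{continuous} with the condition that adjacent points map to points that are equal or adjacent. Once continuity is phrased this way, each claim becomes a short logical implication that follows directly from the definition of domination.

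For the first part, I would take $x,x' \in X$ that are $\kappa$-adjacent and apply $(\kappa,\lambda_1)$-continuity to conclude that $f(x)$ and $f(x')$ are equal or $\lambda_1$-adjacent. In the first case there is nothing to prove; in the second, the hypothesis $\lambda_1 \ge_d \lambda_2$ says precisely that $\lambda_1$-adjacency implies $\lambda_2$-adjacency, so $f(x)$ and $f(x')$ are $\lambda_2$-adjacent. Either way $f(x)$ and $f(x')$ are equal or $\lambda_2$-adjacent, and Theorem~\ref{cont-by-adj} then yields $(\kappa,\lambda_2)$-continuity. Intuitively, passing to a dominated adjacency in the codomain only weakens the target condition, so continuity is inherited for free.

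For the second part, I would instead start with $x,x'$ that are $\kappa_2$-adjacent. The hypothesis $\kappa_1 \le_d \kappa_2$ means $\kappa_2 \ge_d \kappa_1$, i.e.\ $\kappa_2$-adjacency implies $\kappa_1$-adjacency, so $x$ and $x'$ are also $\kappa_1$-adjacent. Now $(\kappa_1,\lambda)$-continuity applies and gives that $f(x)$ and $f(x')$ are equal or $\lambda$-adjacent, which is exactly what Theorem~\ref{cont-by-adj} requires for $(\kappa_2,\lambda)$-continuity. Here the point is that a dominating adjacency in the domain presents fewer adjacent pairs to test, and every such pair was already covered by the finer $\kappa_1$-adjacency.

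I do not expect any genuine obstacle: the only thing to watch is keeping the direction of domination straight, since the codomain adjacency is \emph{relaxed} (we move to the dominated relation $\lambda_2$) while the domain adjacency is \emph{tightened} (we move to the dominating relation $\kappa_2$). The argument uses neither finiteness of the images nor any special structure of the product adjacencies, so it applies verbatim to every adjacency introduced in Section~\ref{prelims}.
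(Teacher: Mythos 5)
Your proof is correct and takes the same route as the paper, which simply remarks that the claims follow from the definitions of continuity and of $\ge_d$; you have merely spelled out the details via the adjacency characterization of continuity (Theorem~2.11), with both directions of domination handled correctly. No gaps.
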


\begin{proof} The assertions follows from the definitions of continuity and the $\ge_d$ relation.
\end{proof}

Given functions $f_i: (X_i, \kappa_i) \to (Y_i, \lambda_i)$,
$1 < i \leq v$, the function
\[ \Pi_{i=1}^v f_i: \Pi_{i=1}^v X_i \to \Pi_{i=1}^v Y_i\]
is defined by
\[ (\Pi_{i=1}^v f_i)(x_1, \ldots, x_v) = (f_1(x_1), \ldots, f_v(x_v)), \mbox{ where } x_i \in X_i. \]

\subsection{Normal product}
Here, we recall continuity properties of the normal product adjacency.

\begin{thm}
\label{prod-cont}
{\rm \cite{Boxer16a}}
Let $f_i: (X_i, \kappa_i) \to (Y_i, \lambda_i)$, $1 < i \leq v$.
Then the product map
\[ f=\Pi_{i=1}^v f_i: (\Pi_{i=1}^v X_i, NP_v(\kappa_1, \ldots, \kappa_v)) \to (\Pi_{i=1}^v Y_i, NP_v(\lambda_1, \ldots, \lambda_v)) \]
is continuous if and only if each $f_i$ is continuous. $\qed$
\end{thm}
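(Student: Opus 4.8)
The plan is to reduce everything to the pointwise adjacency characterization of continuity in Theorem~\ref{cont-by-adj}, which lets me avoid reasoning directly about connected sets. The key preliminary observation is that, since here the upper bound $u$ equals $v$, two points $p=(x_1,\dots,x_v)$ and $q=(x_1',\dots,x_v')$ are $NP_v(\kappa_1,\dots,\kappa_v)$-adjacent precisely when $p \neq q$ and, for every index $i$, the coordinates $x_i$ and $x_i'$ are equal or $\kappa_i$-adjacent. Consequently, $p$ and $q$ are equal or $NP_v$-adjacent if and only if each coordinate pair is equal or adjacent. This ``coordinatewise'' reformulation is what makes both directions fall out mechanically.

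For the forward direction, I would suppose each $f_i$ is continuous and take $p,q$ that are $NP_v(\kappa_1,\dots,\kappa_v)$-adjacent. Then for each $i$, the coordinates $x_i$ and $x_i'$ are equal or $\kappa_i$-adjacent, so Theorem~\ref{cont-by-adj} applied to $f_i$ gives that $f_i(x_i)$ and $f_i(x_i')$ are equal or $\lambda_i$-adjacent. By the coordinatewise reformulation above, $f(p)$ and $f(q)$ are therefore equal or $NP_v(\lambda_1,\dots,\lambda_v)$-adjacent, and Theorem~\ref{cont-by-adj} then yields continuity of $f$.

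For the reverse direction, I would assume $f$ is continuous, fix an index $k$, and aim to show $f_k$ is continuous. Choosing a basepoint $a_i \in X_i$ in each factor with $i \neq k$ (this is the only place where nonemptiness of the factors enters), I would embed an arbitrary $\kappa_k$-adjacent pair $x_k \adj_{\kappa_k} x_k'$ into the product as points $p$ and $q$ that agree with the fixed basepoints off coordinate $k$. Since $p$ and $q$ differ only in coordinate $k$, where they are adjacent, they are $NP_v$-adjacent, so continuity of $f$ forces $f(p)$ and $f(q)$ to be equal or $NP_v(\lambda_1,\dots,\lambda_v)$-adjacent. Reading off the $k$-th coordinate---the only one in which $f(p)$ and $f(q)$ can differ---shows that $f_k(x_k)$ and $f_k(x_k')$ are equal or $\lambda_k$-adjacent, whence $f_k$ is continuous by Theorem~\ref{cont-by-adj}.

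The argument is almost entirely bookkeeping driven by the coordinatewise reformulation, so I do not expect a substantive obstacle. The only point genuinely worth watching is the reverse direction's use of basepoints, which presupposes that every factor $X_i$ is nonempty; under the standing convention that digital images are nonempty this is automatic, and no harder difficulty arises.
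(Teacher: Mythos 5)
Your proof is correct: the coordinatewise reformulation of $NP_v$-adjacency (equal-or-adjacent in every coordinate, with distinctness forcing at least one genuinely adjacent coordinate) is valid, both directions follow from Theorem~\ref{cont-by-adj} exactly as you argue, and the basepoint embedding in the factor direction is the right device, with nonemptiness correctly flagged as the only hypothesis it uses. The paper itself states Theorem~\ref{prod-cont} without proof, citing \cite{Boxer16a}, and your argument is the standard one given there, so there is nothing of substance to compare beyond noting the approaches coincide.
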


\begin{thm}
\label{prod-iso}
\rm{\cite{Boxer16a}}
Let $X = \Pi_{i=1}^v X_i$.
Let $f_i: (X_i,\kappa_i) \to (Y_i\,\lambda_i)$, $1 \leq i \leq v$.
\begin{itemize}
\item For $1 \leq u \leq v$, if
the product map
$f=\Pi_{i=1}^v f_i: (X, NP_u(\kappa_1,\ldots,\kappa_v)) \to 
(\Pi_{i=1}^v Y_i, NP_u(\lambda_1,\ldots,\kappa_v))$ is an isomorphism, then
for $1 \leq i \leq v$, $f_i$ is an
isomorphism.
\item If $f_i$ is an isomorphism for all $i$, then the product map
$f=\Pi_{i=1}^v f_i: (X, NP_v(\kappa_1,\ldots,\kappa_v)) \to 
(\Pi_{i=1}^v Y_i, NP_v(\lambda_1,\ldots,\kappa_v))$ is an isomorphism. $\qed$
\end{itemize}
\end{thm}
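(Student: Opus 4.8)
The plan is to use the standard characterization of an isomorphism of digital images as a bijection $g$ for which $x \adj x'$ if and only if $g(x) \adj g(x')$; by Theorem~\ref{cont-by-adj} this is exactly the requirement that $g$ and $g^{-1}$ both be continuous (bijectivity upgrading ``equal or adjacent'' to ``adjacent'' in each direction). For an isomorphism $f_i$ this says that the status of the pair $(x_i,x_i')$---whether $x_i=x_i'$, whether $x_i\adj_{\kappa_i}x_i'$, or neither---is carried bijectively onto the status of $(f_i(x_i),f_i(x_i'))$ in $Y_i$. Both assertions will then follow by tracking, coordinate by coordinate, which indices contribute an equality and which contribute an adjacency.

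For the first assertion I would begin by checking that each $f_i$ is a bijection. Injectivity follows by taking two points of $X$ that agree in every coordinate except the $i$-th, where they carry two given points of $X_i$: these are distinct in $X$, so injectivity of $f$ forces $f_i$ to separate the two $X_i$-points; surjectivity is the analogous use of surjectivity of $f$. To see that $f_i$ preserves adjacency, assume $x_i\adj_{\kappa_i}x_i'$ and form the ``probe'' points $p,q\in X$ that agree off the $i$-th coordinate and carry $x_i,x_i'$ there. Since $p,q$ differ in exactly one coordinate with an adjacency there and $1\le u$, they are $NP_u$-adjacent, so $f(p)\adj_{NP_u}f(q)$. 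As $f(p)$ and $f(q)$ agree in every coordinate except the $i$-th and are distinct by injectivity of $f_i$, the $NP_u$ definition forces $f_i(x_i)\adj_{\lambda_i}f_i(x_i')$. The reflection direction is the mirror image: start from $y_i\adj_{\lambda_i}y_i'$, pull them back through the bijections to probe points whose images differ only in coordinate $i$, and apply continuity of $f^{-1}$ to conclude $\kappa_i$-adjacency of the preimages. Hence each $f_i$ is an isomorphism.

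For the second assertion, each $f_i$ is assumed to be a bijection, so $f=\Pi_{i=1}^v f_i$ is a bijection. Given $p=(x_1,\dots,x_v)$ and $q=(x_1',\dots,x_v')$, set $A=\{i:x_i\adj_{\kappa_i}x_i'\}$ and $E=\{i:x_i=x_i'\}$. Because each $f_i$ is an isomorphism, $f_i(x_i)=f_i(x_i')$ iff $x_i=x_i'$ and $f_i(x_i)\adj_{\lambda_i}f_i(x_i')$ iff $x_i\adj_{\kappa_i}x_i'$, so the sets $A$ and $E$ computed for the image pair $f(p),f(q)$ coincide exactly with those for $p,q$. Now $NP_v$-adjacency of a pair is precisely the condition that $A\neq\emptyset$ while $A\cup E$ exhausts all $v$ indices (the ``at most $v$'' clause being automatic), a condition depending only on $A$ and $E$; hence it holds for $p,q$ iff it holds for $f(p),f(q)$. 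Therefore $f$ preserves and reflects $NP_v$-adjacency and is an isomorphism. (The same coordinate-tracking is insensitive to the particular value, so the argument in fact yields the conclusion for any $NP_u$, though only $NP_v$ is claimed here.)

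I expect the only delicate point to be the probe-point step in the first assertion: one must verify that agreement in all coordinates but the $i$-th, together with $NP_u$-adjacency of a \emph{distinct} image pair, forces the single differing coordinate to be $\lambda_i$-adjacent. This is exactly where injectivity of $f_i$ (to rule out equality of the images) and the combinatorial structure of the $NP_u$ definition are jointly used; everything else reduces to bookkeeping with the definitions.
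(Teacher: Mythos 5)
Your proposal is correct, but there is nothing in this paper to compare it against: the theorem is quoted from~\cite{Boxer16a} and stated here with no proof, so you have in effect supplied the missing argument. Your route is the natural one and it is sound. For the first bullet, the probe-point step works exactly as you anticipate: if $p,q$ agree off coordinate~$i$, injectivity of $f_i$ (which you correctly derive from injectivity of $f$ before invoking it) rules out $f(p)=f(q)$, and then the $NP_u$ definition---at least one adjacent coordinate, all others equal---forces the unique differing coordinate of $f(p),f(q)$ to be $\lambda_i$-adjacent; the reflection direction needs the identity $f^{-1}=\Pi_{i=1}^v f_i^{-1}$, which holds once each $f_i$ is known to be a bijection, and which you should state explicitly. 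For the second bullet, an alternative and shorter route available inside this paper is to apply Theorem~\ref{prod-cont} to $f$ and to $f^{-1}=\Pi_{i=1}^v f_i^{-1}$, since an isomorphism is precisely a bijection with both directions continuous; that is presumably how the cited source argues. What your direct bookkeeping with the index sets $A=\{i: x_i \adj_{\kappa_i} x_i'\}$ and $E=\{i: x_i=x_i'\}$ buys, and the citation-based route does not immediately give, is the strengthening you note parenthetically: since an isomorphism $f_i$ preserves the sets $A$ and $E$ exactly, and $NP_u$-adjacency is the condition $A\neq\emptyset$, $|A|\le u$, $A\cup E=\{1,\ldots,v\}$, the converse direction in fact holds for every $u$ with $1\le u\le v$, not only $u=v$; Theorem~\ref{prod-cont} is stated only for $NP_v$, so it cannot yield this directly. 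Two minor points of hygiene: your probe-point constructions implicitly require every factor $X_j$ and $Y_j$ to be nonempty (standard for digital images, but worth a word), and the characterization of an isomorphism as a bijection that preserves and reflects adjacency should be flagged as the conjunction of Theorem~\ref{cont-by-adj} applied to $f$ and to $f^{-1}$, as you do in your opening paragraph.
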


\begin{thm}
\label{projection-cont}
{\rm \cite{Han05,Boxer16a}}
The projection maps
$p_i: (\Pi_{j=1}^v X_j, NP_u(\kappa_1, \ldots, \kappa_v)) \to (X_i, \kappa_i)$
defined by $p_i(x_1, \ldots, x_v) = x_i$
for $x_i \in (X_i,\kappa_i)$, are all continuous, for $1 \leq u \leq v$. $\qed$
\end{thm}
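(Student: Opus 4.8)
The plan is to invoke the pointwise characterization of continuity supplied by Theorem~\ref{cont-by-adj}: a function between digital images is continuous exactly when it sends adjacent points to points that are equal or adjacent. This reformulation turns the problem into a single-coordinate check against Definition~\ref{NP_u-def}, bypassing any need to reason about connected subsets directly.

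First I would fix an index $i$ and take two arbitrary $NP_u(\kappa_1,\ldots,\kappa_v)$-adjacent points $p = (x_1, \ldots, x_v)$ and $q = (x_1', \ldots, x_v')$ in $\Pi_{j=1}^v X_j$. By Definition~\ref{NP_u-def}, adjacency of $p$ and $q$ means that for at least one and at most $u$ indices $j$ we have $x_j \adj_{\kappa_j} x_j'$, while for every remaining index $x_j = x_j'$. The key observation is that this dichotomy holds coordinatewise for every single index: for each $j$, we have either $x_j \adj_{\kappa_j} x_j'$ or $x_j = x_j'$, with no third possibility.

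Applying this to the coordinate $j = i$, I obtain that $p_i(p) = x_i$ and $p_i(q) = x_i'$ are either $\kappa_i$-adjacent or equal. Since this holds for every pair of adjacent points $p, q$, Theorem~\ref{cont-by-adj} immediately yields that $p_i$ is $(NP_u(\kappa_1,\ldots,\kappa_v),\kappa_i)$-continuous. As $i$ was arbitrary, all the projection maps are continuous.

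There is no genuine obstacle here; the argument collapses to a one-line consequence of the definitions once Theorem~\ref{cont-by-adj} is in hand. The only point that merits care is recognizing that the $NP_u$ condition, although it globally constrains the \emph{number} of coordinates in which $p$ and $q$ may differ, nonetheless forces each \emph{individual} coordinate into the equal-or-adjacent alternative, which is precisely what projection continuity demands. Notably the bound $u$ never enters the reasoning, which explains why the statement holds uniformly across all $1 \leq u \leq v$.
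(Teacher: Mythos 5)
Your proof is correct, and it is essentially the expected argument: the paper states Theorem~\ref{projection-cont} without proof (importing it from \cite{Han05,Boxer16a}), and its proof of the analogous projection result for the tensor product adjacency (Theorem~\ref{T-projection-cont}) uses exactly your method --- a coordinatewise equal-or-adjacent check combined with the pointwise characterization of continuity in Theorem~\ref{cont-by-adj}. Your closing observation that the bound $u$ plays no role, since Definition~\ref{NP_u-def} forces every individual coordinate into the equal-or-adjacent dichotomy regardless of how many coordinates differ, is precisely the right explanation for the uniformity over $1 \leq u \leq v$.
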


\subsection{Tensor product}
For the tensor product adjacency, we have the following.

\begin{prop}
\label{no-1zs}
Suppose $X=\Pi_{i=1}^v X_i$ has a pair of $T(\kappa_1,\ldots,\kappa_v)$-adjacent points. Then
\begin{itemize}
\item each $X_i$ has 2 $\kappa_i$-adjacent points;
      and
\item If $f: (X,T(\kappa_1,\ldots,\kappa_v)) \to 
      (\Pi_{j=1}^w Y_j, T(\lambda_1,\ldots,\lambda_w))$
      is continuous and not constant on some
      component of $X$, then for every $j$, $Y_j$
      has 2 $\lambda_j$-adjacent points.      
\end{itemize}
\end{prop}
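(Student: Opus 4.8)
The plan is to treat the two bullets separately, deriving each directly from the definition of the tensor product adjacency (Definition~\ref{tensor-def}) together with the adjacency reformulation of continuity (Theorem~\ref{cont-by-adj}).

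For the first assertion I would simply unwind the hypothesis. Suppose $p = (x_1,\ldots,x_v)$ and $q = (x_1',\ldots,x_v')$ are $T(\kappa_1,\ldots,\kappa_v)$-adjacent. By definition this means $x_i \adj_{\kappa_i} x_i'$ for every $i$; since adjacency forces $x_i \neq x_i'$, each factor $X_i$ contains the two $\kappa_i$-adjacent points $x_i, x_i'$. There is essentially nothing to prove here beyond reading off the definition.

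For the second assertion, let $C$ be a component of $X$ on which $f$ is not constant. The first step is to produce a single $T(\kappa_1,\ldots,\kappa_v)$-adjacent pair in $C$ whose images under $f$ differ. Choose $a, b \in C$ with $f(a) \neq f(b)$; since $C$ is connected there is a path $a = z_0, z_1, \ldots, z_m = b$ in $C$ with consecutive terms equal or adjacent. Because $f(z_0) \neq f(z_m)$, some consecutive pair $z_{k-1}, z_k$ satisfies $f(z_{k-1}) \neq f(z_k)$; as $f$ is a function, this forces $z_{k-1} \neq z_k$, so $z_{k-1}$ and $z_k$ are genuinely $T(\kappa_1,\ldots,\kappa_v)$-adjacent. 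I would then apply continuity: by Theorem~\ref{cont-by-adj}, $f(z_{k-1})$ and $f(z_k)$ are equal or $T(\lambda_1,\ldots,\lambda_w)$-adjacent, and since they are unequal they are $T(\lambda_1,\ldots,\lambda_w)$-adjacent. Writing $f(z_{k-1}) = (y_1,\ldots,y_w)$ and $f(z_k) = (y_1',\ldots,y_w')$, the definition of the tensor product adjacency gives $y_j \adj_{\lambda_j} y_j'$ for every $j$, so each $Y_j$ contains a $\lambda_j$-adjacent pair, as required.

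The only step requiring any care — and the crux of the argument — is extracting from ``non-constant on a connected component'' an honestly adjacent (not merely equal-or-adjacent) pair of points on which $f$ takes distinct values; everything else is a direct appeal to the definitions. It is worth noting that the tensor product adjacency is precisely what makes the conclusion hold for every $j$ at once: a single $T$-adjacency among the images already encodes adjacency in all $w$ coordinates simultaneously, so no separate argument per factor is needed.
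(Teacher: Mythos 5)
Your proposal is correct and follows essentially the same route as the paper: read the first assertion directly off Definition~\ref{tensor-def}, then for the second produce a $T$-adjacent pair with unequal images and note that a single $T(\lambda_1,\ldots,\lambda_w)$-adjacency forces $\lambda_j$-adjacency in every coordinate. The only difference is that you carefully justify the existence of that adjacent pair via a path in the component (ruling out the equal-points case because $f$ is a function), a step the paper's proof asserts in one clause without elaboration.
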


\begin{proof} Let 
$p=(x_1,\ldots,x_v)$ and $p'=(x_1',\ldots,x_v')$
be $T(\kappa_1,\ldots,\kappa_v)$-adjacent in $X$.
Then for each $i$, $x_i$ and $x_i'$ are
$\kappa_i$-adjacent in $X_i$, which establishes
the first assertion. Further, if $f$ is
as hypothesized, the continuity of $f$ implies
there are $T(\kappa_1,\ldots,\kappa_v)$-adjacent
$p,p'$ such that
$f(p)=(y_1,\ldots, y_w)$ and $f(p')=(y_1',\ldots, y_w')$ are unequal, hence $T(\lambda_1,\ldots,\lambda_w)$-adjacent. Therefore, for all~$j$,
$y_j$ and $y_j'$ are $\lambda_j$-adjacent.
\end{proof}

It is easy to construct examples showing that the assertions obtained
from Proposition~\ref{no-1zs} by
substituting the normal product
adjacency $NP_v$ for $T$ are false.

\begin{thm}
\label{prod-cont-implies-factor}
Let $X=\Pi_{i=1}^v X_i$, $Y=\Pi_{i=1}^v Y_i$.
If the product map
\[ f=\Pi_{i=1}^v f_i: (X, T(\kappa_1,\ldots,\kappa_v)) \to (Y, T(\lambda_1, \ldots, \lambda_v)) \]
is continuous, then for each~$i$,
$f_i: (X_i,\kappa_i) \to (Y_i,\lambda_i)$ is
continuous.
\end{thm}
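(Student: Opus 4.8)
The plan is to verify continuity of each $f_i$ by means of the pointwise adjacency criterion of Theorem~\ref{cont-by-adj}. That is, I would fix an index $i$ together with a pair of $\kappa_i$-adjacent points $x_i \adj_{\kappa_i} x_i'$ in $X_i$, and aim to show that $f_i(x_i)$ and $f_i(x_i')$ are equal or $\lambda_i$-adjacent in $Y_i$. The idea is to realize this single-coordinate adjacency as the $i$-th coordinate of a genuine $T(\kappa_1,\ldots,\kappa_v)$-adjacency in $X$, so that the hypothesized continuity of the product map $f$ can be brought to bear.

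Concretely, for each index $j \neq i$ I would select a $\kappa_j$-adjacent pair $a_j \adj_{\kappa_j} a_j'$ in $X_j$, and then form the points $p, p' \in X$ whose $i$-th coordinates are $x_i, x_i'$ and whose $j$-th coordinates (for $j \neq i$) are $a_j, a_j'$. By Definition~\ref{tensor-def}, every coordinate of $p$ is adjacent to the corresponding coordinate of $p'$, so $p \adj_{T(\kappa_1,\ldots,\kappa_v)} p'$. Continuity of $f$, via Theorem~\ref{cont-by-adj}, then forces $f(p)$ and $f(p')$ to be equal or $T(\lambda_1,\ldots,\lambda_v)$-adjacent. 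Reading off the $i$-th coordinate settles both cases at once: if $f(p) = f(p')$ then in particular $f_i(x_i) = f_i(x_i')$; while if $f(p) \adj_T f(p')$ then, since tensor adjacency requires adjacency in \emph{every} coordinate, we obtain $f_i(x_i) \adj_{\lambda_i} f_i(x_i')$. In either case $f_i$ respects the adjacency of $x_i, x_i'$, which is exactly what continuity demands.

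The step I expect to be the main obstacle is the selection of a $\kappa_j$-adjacent pair in each of the remaining factors $X_j$, $j \neq i$: the tensor product genuinely requires \emph{simultaneous} adjacency in all coordinates, so the construction of $p, p'$ collapses if some factor $X_j$ possesses no adjacent pair at all. By Proposition~\ref{no-1zs}, the availability of the needed pairs in all factors is equivalent to $X$ having a $T(\kappa_1,\ldots,\kappa_v)$-adjacent pair. When some factor is degenerate in this sense, $X$ has no $T$-adjacent points, the continuity hypothesis on $f$ becomes vacuous, and one cannot hope to recover continuity of the factor maps on the non-degenerate factors; so the statement should be read in the setting where each factor carries an adjacent pair, equivalently where $X$ admits a $T$-adjacent pair. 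This caveat is precisely where the tensor product departs from the normal product: for $NP_v$ one may keep all other coordinates fixed and \emph{equal} (as in Theorem~\ref{prod-cont}), whereas for $T$ the off-coordinates must themselves be adjacent, which is why the degenerate case must be isolated and handled separately.
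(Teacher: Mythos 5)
Your proof is essentially the paper's own argument: the paper likewise realizes the single-coordinate adjacency $x_i \adj_{\kappa_i} x_i'$ as the $i$-th coordinate of a pair $p, p'$ that are adjacent in \emph{every} coordinate, invokes continuity of $f$ to get $f(p)$ and $f(p')$ equal or $T(\lambda_1,\ldots,\lambda_v)$-adjacent, and reads off coordinate $i$. Your closing caveat about factors lacking any adjacent pair is not in the paper --- its proof silently presumes adjacent pairs exist in all the other coordinates --- so your version matches, and indeed slightly sharpens, the published argument.
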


\begin{proof}
If $x_i, x_i'$ are $\kappa_i$-adjacent
in $X_i$, then
$p=(x_1,\ldots,x_v)$ and $p'=(x_1',\ldots,x_v')$
are $T(\kappa_1,\ldots,\kappa_v)$-adjacent
in $X$. Thus $f(p)$ and $f(p')$ are equal or
$T(\lambda_1,\ldots,\lambda_v)$-adjacent in $Y$.
This implies $f_i(x_i)$ and $f_i(x_i')$ are
equal or $\lambda_i$-adjacent in $Y_i$.
Thus $f_i$ is continuous.
\end{proof}

However, the converse to
Theorem~\ref{prod-cont-implies-factor} is
not generally true, as shown in the following.

\begin{exl}
\label{need-local-1-1}
Let $f: [0,1]_{\Z} \to [0,1]_{\Z}$ be the identity
function. Let $g: [0,1]_{\Z} \to [0,1]_{\Z}$ be the
constant function $g(x)=0$. Then, using
Examples~\ref{id-exl} and~\ref{const-exl}, $f$ and $g$ are
each $(c_1,c_1)$-continuous, but
$f \times g: [0,1]_{\Z} \times [0,1]_{\Z} \to
[0,1]_{\Z} \times [0,1]_{\Z}$ is not
$(T(c_1,c_1),T(c_1,c_1))$-continuous.
\end{exl}

\begin{proof} This follows from the observations
that $(0,0)$ and $(1,1)$ are $T(c_1,c_1)$-adjacent,
but $(f \times g)(0,0)=(0,0)$ and $(f \times g)(1,1)=(1,0)$ are 
neither equal nor $T(c_1,c_1)$-adjacent.
\end{proof}

A partial converse to
Theorem~\ref{prod-cont-implies-factor} is obtained by using
the following notion.

\begin{definition}
A continuous function
$f: (X, \kappa) \to (Y, \lambda)$ is
{\em locally one-to-one} if
$f|_{N_{\kappa}^*(x,1)}$ is one-to-one for all
$x \in X$. $\qed$
\end{definition}

Note any function between digital images that is one-to-one must be locally one-to-one.

\begin{thm}
\label{T-prod-continuity}
Suppose $f_i: (X_i, \kappa_i) \to (Y_i,\lambda_i)$ is continuous and locally
one-to-one for $1 \le i \le v$. Then the product function
$f=\Pi_{i=1}^v f_i: \Pi_{i=1}^v X_i \to \Pi_{i=1}^v Y_i$ is $(T(\kappa_1,\ldots, \kappa_v), T(\lambda_1,\ldots,\lambda_v))$-continuous and locally one-to-one.
\end{thm}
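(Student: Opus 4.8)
The plan is to verify the two assertions separately, in each case reducing to a coordinate-wise argument via Theorem~\ref{cont-by-adj} together with the local injectivity hypothesis.

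For continuity, I would invoke Theorem~\ref{cont-by-adj}: it suffices to show that any $T(\kappa_1,\ldots,\kappa_v)$-adjacent points have images that are equal or $T(\lambda_1,\ldots,\lambda_v)$-adjacent. So let $p=(x_1,\ldots,x_v)$ and $p'=(x_1',\ldots,x_v')$ be $T(\kappa_1,\ldots,\kappa_v)$-adjacent; by definition $x_i \adj_{\kappa_i} x_i'$ for every $i$, so in particular $x_i \neq x_i'$. Continuity of each $f_i$ gives that $f_i(x_i)$ and $f_i(x_i')$ are equal or $\lambda_i$-adjacent. The crucial step is to rule out the equality: since $x_i$ and $x_i'$ are $\kappa_i$-adjacent, both lie in $N_{\kappa_i}^*(x_i)$, on which $f_i$ is one-to-one, so $f_i(x_i) \neq f_i(x_i')$. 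Hence $f_i(x_i) \adj_{\lambda_i} f_i(x_i')$ for every $i$, which is exactly the condition for $f(p)$ and $f(p')$ to be $T(\lambda_1,\ldots,\lambda_v)$-adjacent.

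For the local one-to-one property, I would fix $p=(x_1,\ldots,x_v)$ and take $q=(y_1,\ldots,y_v)$ and $q'=(y_1',\ldots,y_v')$ in the closed $T(\kappa_1,\ldots,\kappa_v)$-neighborhood of $p$ with $f(q)=f(q')$; the goal is $q=q'$. The key observation is that in each coordinate $y_i \in N_{\kappa_i}^*(x_i)$, because either $q=p$ (so $y_i=x_i$) or $q$ is $T$-adjacent to $p$ (so $y_i \adj_{\kappa_i} x_i$); the same holds for $y_i'$. Since $f(q)=f(q')$ forces $f_i(y_i)=f_i(y_i')$ for each $i$, and $f_i$ is injective on $N_{\kappa_i}^*(x_i)$, we conclude $y_i=y_i'$ for all $i$, i.e.\ $q=q'$.

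The main obstacle — really the whole point of the theorem — is the passage from ``equal or adjacent'' in each coordinate to genuine adjacency in each coordinate in the continuity argument. Plain continuity of the $f_i$ does not suffice, since $T$-adjacency requires strict adjacency in every coordinate simultaneously; Example~\ref{need-local-1-1} shows precisely how a collapse to equality in a single coordinate destroys $T$-continuity. Local injectivity is exactly the hypothesis that prevents this collapse, and once it is in hand both parts are routine coordinate-wise verifications. The only point requiring care is to observe that $x_i'$ lies in $N_{\kappa_i}^*(x_i)$ (together with $x_i$ itself), so that the local injectivity of $f_i$ applies to the relevant pair.
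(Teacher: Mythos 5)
Your proposal is correct and takes essentially the same route as the paper: reduce to adjacent points via Theorem~\ref{cont-by-adj}, note that $T$-adjacency forces $x_i \adj_{\kappa_i} x_i'$ in \emph{every} coordinate, and use local injectivity of each $f_i$ (applied to the pair $x_i, x_i' \in N_{\kappa_i}^*(x_i)$) to upgrade ``equal or $\lambda_i$-adjacent'' to genuine $\lambda_i$-adjacency, giving $T(\lambda_1,\ldots,\lambda_v)$-adjacency of the images. Your second half is in fact more complete than the paper's: the paper simply concludes ``so $f$ is continuous and locally one-to-one,'' whereas you supply the needed verification that for $q, q'$ in the closed $T$-neighborhood of $p$ with $f(q)=f(q')$, each coordinate satisfies $y_i, y_i' \in N_{\kappa_i}^*(x_i)$, so injectivity of $f_i$ on that neighborhood yields $q=q'$ --- a point worth making explicit, since adjacent points having distinct images does not by itself give injectivity on $N^*(p)$.
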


\begin{proof}
Suppose $f_i: (X_i, \kappa_i) \to (Y_i,\lambda_i)$ is continuous and 
locally one-to-one for $1 \le i \le v$. Let $p=(x_1,\ldots, x_v)$
and $p'=(x_1',\ldots, x_v')$ be
$T(\kappa_1,\ldots, \kappa_v)$-adjacent,
where $x_i$ and $x_i'$ are
$\kappa_i$-adjacent in $X_i$. Since
$f_i$ is continuous and locally one-to-one,
we must have that $f_i(x_i)$ and $f_i(x_i')$
are $\lambda_i$-adjacent in $Y_i$. Thus,
$f(p)$ and $f(p')$ are
$T(\lambda_1,\ldots,\lambda_v)$-adjacent, so
$f$ is continuous and locally one-to-one.
\end{proof}

\begin{thm}
\label{T-prod-cont}
Let $X=\Pi_{i=1}^v X_i$, $Y=\Pi_{i=1}^v Y_i$.
Then the product map
\[ f=\Pi_{i=1}^v f_i: (X, T(\kappa_1,\ldots,\kappa_v)) \to (Y, T(\lambda_1, \ldots, \lambda_v)) \]
is an isomorphism if and only if each
$f_i$ is an isomorphism.
\end{thm}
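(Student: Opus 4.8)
The plan is to use the standard reformulation that a bijection between digital images is an isomorphism precisely when both it and its inverse are continuous, and then to deduce each direction from the two tensor-product continuity results already in hand: Theorem~\ref{prod-cont-implies-factor} (continuity of the product map forces continuity of each factor) and Theorem~\ref{T-prod-continuity} (continuity plus local injectivity of the factors gives continuity of the product). Throughout I assume, as is standard, that all $X_i$ and $Y_i$ are nonempty, so that $f=\Pi_{i=1}^v f_i$ is a bijection if and only if every $f_i$ is a bijection, in which case $f^{-1}=\Pi_{i=1}^v f_i^{-1}$. First I would record the reformulation itself: by Theorem~\ref{cont-by-adj}, for a bijection $g$ the relation $g(x)=g(x')$ holds only when $x=x'$, so continuity of $g$ amounts to $x\adj x' \Rightarrow g(x)\adj g(x')$, and continuity of $g^{-1}$ supplies the reverse implication; together these say exactly that $g$ is an isomorphism.

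For the forward direction, I would assume $f$ is an isomorphism, hence a continuous bijection whose inverse is continuous. Bijectivity of $f$ gives bijectivity of each $f_i$, and continuity of $f$ together with Theorem~\ref{prod-cont-implies-factor} gives continuity of each $f_i$. Applying the same theorem to $f^{-1}=\Pi_{i=1}^v f_i^{-1}$, which is continuous because $f$ is an isomorphism, yields continuity of each $f_i^{-1}$. Thus each $f_i$ is a continuous bijection with continuous inverse, i.e.\ an isomorphism.

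For the converse, I would assume each $f_i$ is an isomorphism. Then each $f_i$ is a continuous bijection, and being one-to-one it is locally one-to-one (as noted just after the definition of local injectivity), so Theorem~\ref{T-prod-continuity} shows $f=\Pi_{i=1}^v f_i$ is $(T(\kappa_1,\ldots,\kappa_v),T(\lambda_1,\ldots,\lambda_v))$-continuous. Applying the same theorem to the isomorphisms $f_i^{-1}$ shows $f^{-1}=\Pi_{i=1}^v f_i^{-1}$ is continuous as well. Since $f$ is a bijection, it is therefore an isomorphism.

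The substantive content is carried entirely by the two cited theorems; the rest is bookkeeping with inverses and with bijectivity of products. The one point that genuinely requires care is in the converse: mere continuity of the $f_i$ does \emph{not} force continuity of $f$ for the tensor adjacency (cf.\ Example~\ref{need-local-1-1}), so it is essential that an isomorphism, being injective, is locally one-to-one, which is exactly the hypothesis that makes Theorem~\ref{T-prod-continuity} applicable. I expect this to be the only place where a careless argument could break down.
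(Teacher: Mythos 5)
Your proof is correct and follows essentially the same route as the paper's: both directions are handled by exactly the two cited results, with Theorem~\ref{prod-cont-implies-factor} applied to $f$ and to $f^{-1}=\Pi_{i=1}^v f_i^{-1}$ for the forward direction, and Theorem~\ref{T-prod-continuity} (via the observation that one-to-one implies locally one-to-one) applied to the $f_i$ and $f_i^{-1}$ for the converse. Your explicit flagging of the locally one-to-one hypothesis, which the paper leaves implicit when invoking Theorem~\ref{T-prod-continuity}, is a small but welcome clarification rather than a different argument.
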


\begin{proof}
If $f$ is an isomorphism, each $f_i$ must
be one-to-one and onto. Therefore,
$f_i^{-1}: Y_i \to X_i$ is a single-valued
function.

By Theorem~\ref{prod-cont-implies-factor}, each $f_i$ is continuous. Since
$f^{-1}=\Pi_{i=1}^v f_i^{-1}$, it follows
from Theorem~\ref{prod-cont-implies-factor}
that each $f_i^{-1}$ is continuous. Hence
$f_i$ is an isomorphism.

Conversely, if each $f_i$ is an isomorphism,
then $f$ is one-to-one and onto, so
$f^{-1}=\Pi_{i=1}^v f_i^{-1}$ is a
single-valued function. By Theorem~\ref{T-prod-continuity}, $f$ is continuous.

Similarly, $f^{-1}$ is continuous.
Therefore, $f$ is an isomorphism.
\end{proof}

\begin{thm}
\label{T-projection-cont}
The projection maps
$p_i: (\Pi_{i=1}^v X_i, T(\kappa_1, \ldots, \kappa_v)) \to (X_i, \kappa_i)$
defined by $p_i(x_1, \ldots, x_v) = x_i$
for $x_i \in X_i$ are all continuous.
\end{thm}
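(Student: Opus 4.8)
The plan is to invoke the pointwise characterization of continuity from Theorem~\ref{cont-by-adj}, which reduces the problem to checking the behavior of $p_i$ on adjacent pairs of points. So I would fix an index $i$ and let $p=(x_1,\ldots,x_v)$ and $p'=(x_1',\ldots,x_v')$ be any two $T(\kappa_1,\ldots,\kappa_v)$-adjacent points of $\Pi_{j=1}^v X_j$. The goal is then to show that $p_i(p)=x_i$ and $p_i(p')=x_i'$ are equal or $\kappa_i$-adjacent in $X_i$.

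The key step is simply to unwind the definition of the tensor product adjacency (Definition~\ref{tensor-def}): by definition, $p$ and $p'$ being $T(\kappa_1,\ldots,\kappa_v)$-adjacent means that for \emph{all} indices $j$ we have $x_j \adj_{\kappa_j} x_j'$. In particular this holds for our fixed index $i$, so $x_i \adj_{\kappa_i} x_i'$. Hence $p_i(p)$ and $p_i(p')$ are in fact $\kappa_i$-adjacent (not merely equal or adjacent), which more than suffices. Applying Theorem~\ref{cont-by-adj} then yields continuity of $p_i$, and since $i$ was arbitrary, all the projection maps are continuous.

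I do not expect any genuine obstacle here: the strong requirement built into the tensor product adjacency---that adjacency hold in every coordinate at once---is exactly what makes the projections behave well, since each projection retains one of those guaranteed coordinatewise adjacencies. It is worth contrasting this with the situation for projections from the normal product, where a coordinate may be held fixed (equal) rather than advanced; there the conclusion is ``equal or adjacent'' rather than outright adjacency, but for the tensor product we always land in the adjacent case. The only point requiring any care is purely notational, namely keeping the index $i$ fixed while the argument runs, and observing that the all-coordinates condition specializes to the single coordinate being projected onto.
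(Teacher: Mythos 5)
Your proof is correct and follows essentially the same route as the paper's: both unwind Definition~\ref{tensor-def} to observe that $T$-adjacency forces $\kappa_i$-adjacency in every coordinate, hence in the projected one, and then apply the adjacency characterization of continuity (Theorem~\ref{cont-by-adj}). Your added remark that the projected pair is always strictly adjacent (never merely equal), in contrast with the normal product case, is accurate but not a substantive difference in the argument.
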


\begin{proof} Let $p=(x_1, \ldots, x_v)$ and
$p'=(x_1', \ldots,x_v')$ be
$T(\kappa_1, \ldots, \kappa_v)$-adjacent in
$\Pi_{i=1}^v X_i$, 
where $x_i,x_i' \in X_i$. Then for all indices $i$,
$x_i=p_i(p)$ and $x_i'=p_i(p')$ are
$\kappa_i$-adjacent. Thus, $p_i$ is continuous.
\end{proof}

\begin{comment}
The statement analogous to 
Theorem~\ref{projection-cont} is not generally true if a $c_u$-adjacency
is used instead of a normal product adjacency, as shown in the following.

\begin{exl}
\cite{BoxKar12}
Let $X=[0,1]_{\Z} \subset \Z$.
Let $Y=\{(0,0),(1,1)\} \subset \Z^2$.
Then the projection map
$p_2: (X \times Y, c_3) \to (Y,c_1)$ is
not continuous, since $X \times Y$ 
is $c_3$-connected and $Y$ is not
$c_1$-connected. \qed
\end{exl}
\end{comment}

A seeming oddity is that a common method of
injection that is often continuous, is not continuous when the tensor product
adjacency is used, as shown in the following.

\begin{prop}
\label{natural-inject-not-cont}
Let $(X,\kappa)$ and $(Y,\lambda)$ be
digital images. Let $y \in Y$. If $X$ has a pair of $\kappa$-adjacent points, then the
function $f: X \to (X \times Y,T(\kappa,\lambda))$ defined by
$f(x)=(x,y)$ is not continuous.
\end{prop}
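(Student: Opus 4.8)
The plan is to invoke the point-adjacency characterization of continuity from Theorem~\ref{cont-by-adj}: to show that $f$ fails to be continuous, it suffices to exhibit a single pair of $\kappa$-adjacent points of $X$ whose images under $f$ are neither equal nor $T(\kappa,\lambda)$-adjacent in $X \times Y$.

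By hypothesis $X$ contains a pair of $\kappa$-adjacent points; call them $a$ and $b$. Since an adjacency relation is irreflexive (adjacent points are by definition distinct), we have $a \neq b$. I would then compute the images $f(a) = (a,y)$ and $f(b) = (b,y)$ and rule out both ways they could fail to witness discontinuity. First, they are unequal, because their first coordinates $a$ and $b$ differ. Second, they are not $T(\kappa,\lambda)$-adjacent: by Definition~\ref{tensor-def}, $T(\kappa,\lambda)$-adjacency requires adjacency in \emph{every} factor, and in the second factor both points equal $y$, which is not $\lambda$-adjacent to itself. Hence $f(a)$ and $f(b)$ are neither equal nor $T(\kappa,\lambda)$-adjacent while $a \adj_\kappa b$, so by Theorem~\ref{cont-by-adj} the map $f$ is not continuous.

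There is no genuine obstacle here; the only point that must be made explicit is that adjacency is irreflexive, so freezing the second coordinate at the constant value $y$ automatically destroys $T$-adjacency regardless of what happens in the first factor. This is exactly the mechanism already observed in Example~\ref{need-local-1-1}, where a product of continuous maps failed to be $T$-continuous because one factor collapsed an adjacency to equality.
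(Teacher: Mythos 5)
Your proof is correct and follows essentially the same route as the paper: the paper's one-line argument likewise observes that for $\kappa$-adjacent $x,x'$, the images $(x,y)$ and $(x',y)$ fail to be $T(\kappa,\lambda)$-adjacent because the second coordinates are equal. Your version merely makes explicit the two details the paper leaves implicit --- that $f(x)\neq f(x')$ since $x\neq x'$, and that irreflexivity of $\lambda$ rules out tensor adjacency --- which is a fine elaboration but not a different argument.
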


\begin{proof} This is because given $\kappa$-adjacent
$x,x' \in X$, $f(x)=(x,y)$ and
$f(x')=(x',y)$ are not $T(\kappa,\lambda)$-adjacent.
\end{proof}

\subsection{Cartesian product}
\begin{thm}
\label{Cart-prod-cont}
Let $f_i: (X_i,\kappa_i) \to (Y_i,\lambda_i)$ be functions
between digital images, $1 \le i \le v$.
Let $X=\Pi_{i=1}^v X_i$, $Y=\Pi_{i=1}^v Y_i$. 
Then the product
function $f=\Pi_{i=1}^v f_i: X \to Y$ is $(\times_{i=1}^v \kappa_i, \times_{i=1}^v \lambda_i)$-continuous if 
and only if each $f_i$ is continuous.
\end{thm}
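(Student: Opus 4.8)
The plan is to reduce both implications to the pointwise characterization of continuity in Theorem~\ref{cont-by-adj}, exploiting the defining rigidity of the Cartesian product adjacency (Definition~\ref{product-adj-def}): two points of $\Pi_{i=1}^v X_i$ are $\times_{i=1}^v \kappa_i$-adjacent precisely when they differ in a single coordinate and are $\kappa_i$-adjacent there. This single-coordinate structure is exactly what makes both directions go through cleanly, in contrast with the tensor product, where adjacency forces simultaneous movement in every coordinate.

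For the converse direction (each $f_i$ continuous implies $f$ continuous), I would take $\times_{i=1}^v \kappa_i$-adjacent points $p=(x_1,\ldots,x_v)$ and $p'=(x_1',\ldots,x_v')$. By Definition~\ref{product-adj-def} there is a unique index $i$ with $x_i \adj_{\kappa_i} x_i'$ and $x_j = x_j'$ for all $j \neq i$. Then $f_j(x_j) = f_j(x_j')$ for $j \neq i$, while continuity of $f_i$ together with Theorem~\ref{cont-by-adj} gives that $f_i(x_i)$ and $f_i(x_i')$ are equal or $\lambda_i$-adjacent. In either case $f(p)$ and $f(p')$ agree in every coordinate except possibly the $i$-th, so they are equal or $\times_{i=1}^v \lambda_i$-adjacent; applying Theorem~\ref{cont-by-adj} once more yields continuity of $f$.

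For the forward direction I would argue directly. Fix an index $i$ and $\kappa_i$-adjacent points $x_i, x_i' \in X_i$. Choosing an arbitrary base point $a_j \in X_j$ in each remaining factor, I form the points $p$ and $p'$ that carry these base points off coordinate $i$ and carry $x_i$, respectively $x_i'$, in coordinate $i$. Then $p$ and $p'$ differ only in coordinate $i$, where they are $\kappa_i$-adjacent, so they are $\times_{i=1}^v \kappa_i$-adjacent, and continuity of $f$ forces $f(p)$ and $f(p')$ to be equal or adjacent. Since these images already agree in every coordinate $j \neq i$, their only possible discrepancy is in coordinate $i$, whence $f_i(x_i)$ and $f_i(x_i')$ are equal or $\lambda_i$-adjacent; continuity of $f_i$ then follows from Theorem~\ref{cont-by-adj}.

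The main obstacle — really the only delicate point — is the base-point construction in the forward direction, which requires each factor $X_j$ to be nonempty so that the test points $p$ and $p'$ exist. I would either invoke the standing nonemptiness convention for digital images or dispose of the degenerate case (in which $X$ is empty and the claim about $f$ is vacuous) separately. It is worth flagging that this freezing-of-coordinates trick is precisely what fails for the tensor product adjacency, as witnessed by Example~\ref{need-local-1-1}, which is why the present statement needs no local injectivity hypothesis of the sort required in Theorem~\ref{T-prod-continuity}.
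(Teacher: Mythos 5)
Your proof is correct and follows essentially the same route as the paper's: both directions reduce to the pointwise adjacency criterion of Theorem~\ref{cont-by-adj}, using the single-differing-coordinate structure of $\times_{i=1}^v \kappa_i$, with the forward direction testing $f$ on points that freeze all coordinates except the $i$-th. Your explicit handling of the nonemptiness of the factors in the base-point construction is a minor refinement the paper leaves implicit, but it does not change the argument.
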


\begin{proof}
Suppose $f$ is continuous.
Let $x_i \adj_{\kappa_i} x_i'$ in
$X_i$. Let
$p=(x_1,\ldots,x_v)$,
$p'=(x_1,\ldots,x_{i-1},x_i',x_{i+1},\ldots,x_v)$.
Then $p \adj_{\times_{i=1}^v \kappa_i} p'$, so either $f(p)=f(p')$ or $f(p) \adj_{\times_{i=1}^v \lambda_i} f(p')$. The former case implies
$f_i(x_i)=f_i(x_i')$ and the latter
case implies $f_i(x_i) \adj_{\lambda_i} f_i(x_i')$. Hence
$f_i$ is continuous.

Suppose each $f_i$ is continuous.
Let $p$ and $p'$ be
$\times_{i=1}^v \kappa_i$-adjacent points of
$X$. Then there is only one index $k$ in which $p$ and $p'$ differ, i.e., for some
$x_i \in X_i$ and $x_k' \in X_k$,
$p=(x_1, \ldots, x_v)$,
$p'=(x_1, \ldots,x_{k-1},x_k', x_{k+1}, \ldots, x_v)$, and
$x_k \adj_{\kappa_k}x_k'$.
Then $f(p)$ and $f(p')$ have the
same $i^{th}$ coordinate for
$i \neq k$, and have
$k^{th}$ coordinates of
$f_k(x_k)$ and $f_k(x_k')$, respectively. Continuity of
$f_k$ implies either $f_k(x_k)=f_k(x_k')$ or
$f_k(x_k) \adj_{\kappa_k} f_k(x_k')$. Therefore, $f$ is
continuous.
\end{proof}

\begin{thm}
\label{product-projection-cont}
The projection maps
$p_i: (\Pi_{i=1}^v X_i, \times_{i=1}^v \kappa_i) \to (X_i, \kappa_i)$
defined by $p_i(x_1, \ldots, x_v) = x_i$
for $x_i \in X_i$ are all continuous.
\end{thm}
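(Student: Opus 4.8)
The plan is to invoke the pointwise characterization of continuity from Theorem~\ref{cont-by-adj}: it suffices to show that whenever $p$ and $p'$ are $\times_{i=1}^v \kappa_i$-adjacent in $\Pi_{i=1}^v X_i$, the images $p_i(p)$ and $p_i(p')$ are equal or $\kappa_i$-adjacent in $X_i$, for each index $i$.

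First I would unwind the definition of the Cartesian product adjacency (Definition~\ref{product-adj-def}). Suppose $p=(x_1,\ldots,x_v)$ and $p'=(x_1',\ldots,x_v')$ are $\times_{i=1}^v \kappa_i$-adjacent. By definition there is exactly one index $k$ with $x_k \adj_{\kappa_k} x_k'$, while $x_j = x_j'$ for every $j \neq k$. Now fix an arbitrary projection index $i$ and split into two cases. If $i = k$, then $p_i(p) = x_k$ and $p_i(p') = x_k'$ are $\kappa_k$-adjacent by hypothesis. If $i \neq k$, then $p_i(p) = x_i = x_i' = p_i(p')$, so the images coincide. In either case $p_i(p)$ and $p_i(p')$ are equal or $\kappa_i$-adjacent, which is exactly the condition required by Theorem~\ref{cont-by-adj}. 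Hence each $p_i$ is continuous.

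I do not expect any genuine obstacle here; the statement is elementary once the defining condition of the Cartesian product adjacency is spelled out, since that adjacency forces agreement in all but one coordinate and therefore projects either to an adjacency or to an equality. As an even shorter alternative, one could appeal to Proposition~\ref{CartesianAndNP}, which identifies $\times_{i=1}^v \kappa_i$ with $NP_1(\kappa_1,\ldots,\kappa_v)$, and then cite Theorem~\ref{projection-cont} in the special case $u=1$; I would mention this as a one-line derivation but present the direct argument above for self-containedness.
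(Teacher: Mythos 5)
Your direct argument is correct, but note that your ``shorter alternative'' is in fact the paper's entire proof: the paper simply cites Proposition~\ref{CartesianAndNP} (identifying $\times_{i=1}^v \kappa_i$ with $NP_1(\kappa_1,\ldots,\kappa_v)$) together with Theorem~\ref{projection-cont} in the case $u=1$. Your main argument instead unwinds Definition~\ref{product-adj-def} from scratch and verifies the adjacency criterion of Theorem~\ref{cont-by-adj}: the case split ($i=k$ yields $\kappa_k$-adjacent images, $i\neq k$ yields equal images) is sound and complete. The two routes trade off as you would expect. The paper's citation route buys brevity and keeps the Cartesian-product theory visibly parallel to the normal-product machinery, since every fact about $\times_{i=1}^v\kappa_i$ can be filtered through $NP_1$; your direct argument buys self-containedness and makes transparent exactly why projections behave well under this adjacency --- agreement is forced in all but one coordinate --- which illuminates the contrast with the lexicographic adjacency, where Example~\ref{L-projection-not-cont} shows the projections $p_i$ for $i>1$ fail to be continuous precisely because that forcing is absent. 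One small wording point: Definition~\ref{product-adj-def} literally says ``for some $i$'' with all other coordinates equal; your claim that there is \emph{exactly} one such index is a (trivial) inference from the fact that adjacent points are distinct, not the literal wording, though nothing in your proof actually depends on uniqueness.
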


\begin{proof}
This follows from Proposition~\ref{CartesianAndNP} and Theorem~\ref{projection-cont}.
\end{proof}

By contrast with Proposition~\ref{natural-inject-not-cont}, we have the following.

\begin{prop}
\label{natural-inject-product-cont}
Let $(X_i,\kappa_i)$ be digital images, $1 \le i \le v$.
Let $x_i \in X_i$. The
functions $I_i: X_i \to (\Pi_{i=1}^v X_i,\times_{i=1}^v \kappa_i)$ defined by
\[ I_i(x)= \left \{ \begin{array}{ll}
(x,x_2, \ldots, x_v) & \mbox{for } i=1; \\
(x_1, \ldots, x_{i-1}, x, x_{i+1} \ldots,
 x_v) & \mbox{for } 1 < i < v; \\
 (x_1, \ldots, x_{v-1}, x) & \mbox{for } i=v,
\end{array} \right .
\]
are continuous.
\end{prop}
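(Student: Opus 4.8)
The plan is to verify continuity directly via the adjacency criterion of Theorem~\ref{cont-by-adj}, which says that a function is continuous precisely when it sends adjacent points to points that are equal or adjacent. Fix an index $i$ and consider $I_i: (X_i,\kappa_i) \to (\Pi_{j=1}^v X_j, \times_{j=1}^v \kappa_j)$. The key observation is that $I_i$ holds every coordinate other than the $i$-th fixed at its prescribed value $x_j$, and lets only the $i$-th coordinate vary. This single-coordinate behavior is exactly what the Cartesian product adjacency is designed to detect.

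First I would take two $\kappa_i$-adjacent points $x, x' \in X_i$ and compare $I_i(x)$ and $I_i(x')$. By the defining formulas, these two tuples agree in every coordinate $j \neq i$ (both equal to $x_j$), while their $i$-th coordinates are $x$ and $x'$, which are $\kappa_i$-adjacent by hypothesis. Next I would invoke Definition~\ref{product-adj-def}: two tuples are $\times_{j=1}^v \kappa_j$-adjacent exactly when they differ in a single coordinate, say the $k$-th, with $\kappa_k$-adjacent entries in that slot and equal entries elsewhere. Our pair satisfies this with $k=i$, so $I_i(x) \adj_{\times_{j=1}^v \kappa_j} I_i(x')$. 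Since adjacent points of the domain thus map to adjacent points of the codomain, Theorem~\ref{cont-by-adj} yields continuity.

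There is no genuine obstacle here; the whole content is the alignment between ``changing exactly one coordinate'' and the definition of the Cartesian product adjacency. The only minor bookkeeping is that $I_i$ is defined piecewise across the three cases $i=1$, $1<i<v$, and $i=v$, but these are purely notational variants of the same tuple and require no separate argument. It is worth noting that this result stands in sharp contrast to Proposition~\ref{natural-inject-not-cont}, where the analogous injection fails to be continuous for the tensor product adjacency precisely because $T(\kappa,\lambda)$ requires \emph{every} coordinate to move, a demand that holding all but one coordinate fixed can never meet.
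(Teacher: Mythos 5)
Your proof is correct and takes the same route as the paper, which simply states that the result ``follows immediately from Definition~\ref{product-adj-def}''; you have merely spelled out the immediate argument (adjacent $x,x'$ yield images differing only in the $i$-th coordinate, with $\kappa_i$-adjacent entries there, so the images are $\times_{j=1}^v \kappa_j$-adjacent, and Theorem~\ref{cont-by-adj} gives continuity). No gaps; the closing remark contrasting with Proposition~\ref{natural-inject-not-cont} is apt but not needed.
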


\begin{proof}
This follows immediately from
Definition~\ref{product-adj-def}.
\end{proof}

\begin{thm}
\label{product-cont}
Let $X=\Pi_{i=1}^v X_i$, $Y=\Pi_{i=1}^v Y_i$.
Then the product map
\[ f=\Pi_{i=1}^v f_i: (X, \times_{i=1}^v \kappa_i) \to (Y, \times_{i=1}^v \lambda_i) \]
is an isomorphism if and only if 
each $f_i$ is an isomorphism,.
\end{thm}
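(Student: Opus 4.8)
The plan is to mirror the proof of Theorem~\ref{T-prod-cont} (the tensor-product analogue), recalling that an isomorphism of digital images is precisely a bijection $f$ such that both $f$ and $f^{-1}$ are continuous. The argument here should in fact be cleaner than in the tensor case, because the key tool, Theorem~\ref{Cart-prod-cont}, is already a biconditional: for the Cartesian product adjacency, the product map is continuous \emph{if and only if} each factor is continuous. Thus I do not expect to need the local-one-to-one detour that Theorem~\ref{T-prod-cont} required, since there the corresponding factor-to-product implication (the converse of Theorem~\ref{prod-cont-implies-factor}) failed.

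For the forward direction, I would assume $f$ is an isomorphism, so $f$ is a bijection and hence each $f_i$ is a bijection; consequently each $f_i^{-1}\colon Y_i\to X_i$ is single-valued and $f^{-1}=\Pi_{i=1}^v f_i^{-1}$. Applying the ``only if'' direction of Theorem~\ref{Cart-prod-cont} to the continuous map $f$ yields that every $f_i$ is continuous, and applying it again to the continuous map $f^{-1}=\Pi_{i=1}^v f_i^{-1}$ yields that every $f_i^{-1}$ is continuous. Hence each $f_i$ is a continuous bijection with continuous inverse, i.e.\ an isomorphism. For the converse, assuming each $f_i$ is an isomorphism makes $f$ a bijection with $f^{-1}=\Pi_{i=1}^v f_i^{-1}$, and the ``if'' direction of Theorem~\ref{Cart-prod-cont} gives continuity of $f$ (from continuity of the $f_i$) and of $f^{-1}$ (from continuity of the $f_i^{-1}$), so $f$ is an isomorphism.

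The only step requiring any care, and thus the main obstacle, is the purely set-theoretic fact that the product map $\Pi_{i=1}^v f_i$ is a bijection exactly when each $f_i$ is a bijection. This uses that the factors are nonempty: injectivity of the product forces injectivity of each coordinate map (a collision in one coordinate, with the others held fixed at chosen points, produces a collision of the product), and surjectivity of the product forces surjectivity of each coordinate map. Since digital images are taken to be nonempty, this decomposition holds, and the rest of the proof is a routine application of Theorem~\ref{Cart-prod-cont} in both directions.
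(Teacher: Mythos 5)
Your proof is correct, and its converse half coincides with the paper's. The forward half, however, takes a genuinely different route. The paper disposes of the ``$f$ isomorphism $\Rightarrow$ each $f_i$ isomorphism'' direction in one line by invoking Proposition~\ref{CartesianAndNP} (the identification $\times_{i=1}^v \kappa_i = NP_1(\kappa_1,\ldots,\kappa_v)$) together with the first bullet of Theorem~\ref{prod-iso}, which is quoted from~\cite{Boxer16a} and already asserts the factor property for all $NP_u$ adjacencies. You instead argue directly: you decompose bijectivity of $f$ into bijectivity of the factors (correctly flagging the set-theoretic point that this needs the factors nonempty, a hypothesis both proofs use silently), observe $f^{-1}=\Pi_{i=1}^v f_i^{-1}$, and then apply the biconditional Theorem~\ref{Cart-prod-cont} twice, once to $f$ and once to $f^{-1}$. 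This is exactly the template of the paper's proof of Theorem~\ref{T-prod-cont} for the tensor product (where Theorem~\ref{prod-cont-implies-factor} plays the role of your two applications of Theorem~\ref{Cart-prod-cont}), and as you note it is cleaner here because Theorem~\ref{Cart-prod-cont} is a full equivalence, so no locally one-to-one hypothesis is needed in either direction. What each approach buys: the paper's citation of Theorem~\ref{prod-iso} is shorter and reuses the $NP_u$ machinery, but depends on the external result of~\cite{Boxer16a}; your argument is self-contained within the tools of this paper and makes explicit the bijectivity bookkeeping that the cited route leaves implicit. Both are valid proofs of Theorem~\ref{product-cont}.
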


\begin{proof}
Suppose $f$ is an isomorphism.
Then it follows from Proposition~\ref{CartesianAndNP} and 
Theorem~\ref{prod-iso} that $f_i$ is an isomorphism.

Suppose each $f_i$ is an isomorphism. Then
$f$ must be one-to-one and onto, and by Theorem~\ref{Cart-prod-cont}, $f$ is continuous.
Similarly, $f^{-1}=\Pi_{i=1}^v f_i^{-1}$ is continuous. Therefore,
$f$ is an isomorphism.
\end{proof}

\subsection{Lexicographic adjacency}
\begin{thm}
\label{lexico-1-1}
Suppose $f_i: (X_i,\kappa_i) \to (Y_i,\lambda_i)$ is a function
between digital images, $1 \le i \le v$. Let $f=\Pi_{i=1}^v f_i: \Pi_{i=1}^v X_i \to \Pi_{i=1}^v Y_i$ be the product function.
\begin{itemize}
\item If $f$ is $(L(\kappa_1,\ldots,\kappa_v),L(\lambda_1,\ldots,\lambda_v))$-continuous,
then each $f_i$ is $(\kappa_i,\lambda_i)$-continuous. Further,
if $f$ is locally one-to-one, then each $f_i$ is locally one-to-one.
\item If each $f_i$ is a continuous function that is
locally one-to-one, then
$f$ is
$(L(\kappa_1,\ldots,\kappa_v),L(\lambda_1,\ldots,\lambda_v))$-continuous.
\end{itemize}
\end{thm}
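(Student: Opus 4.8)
The plan is to work throughout with the pointwise reformulation of continuity from Theorem~\ref{cont-by-adj}, so that in each direction it suffices to track how adjacent points map under $f$. The organizing observation about Definition~\ref{lexico} is that two distinct tuples $p=(x_1,\ldots,x_v)$ and $p'=(x_1',\ldots,x_v')$ are $L(\kappa_1,\ldots,\kappa_v)$-adjacent precisely when, at the \emph{first} index $k$ at which they differ, one has $x_k \adj_{\kappa_k} x_k'$; by Remark~\ref{gaps-allowed} the coordinates beyond $k$ are entirely unconstrained. I would use this ``first point of difference'' description repeatedly, together with the fact that the adjacencies here are irreflexive, so that $a \adj b$ forces $a \neq b$.

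For the forward implication of the first bullet, to show a fixed factor $f_i$ is continuous I would fix base points $a_j \in X_j$ for $j \neq i$ and, given $\kappa_i$-adjacent $x_i, x_i'$, form the tuples $p$ and $p'$ carrying $x_i, x_i'$ in coordinate $i$ and agreeing with the $a_j$ elsewhere. These agree on coordinates $1,\ldots,i-1$ and are $\kappa_i$-adjacent in coordinate $i$, so $p \adj_L p'$, and continuity of $f$ gives that $f(p)$ and $f(p')$ are equal or $L$-adjacent. Since $f(p)$ and $f(p')$ can differ only in coordinate $i$, the first-point-of-difference description forces any $L$-adjacency between them to be witnessed at index $i$ itself; reading off that coordinate yields $f_i(x_i) = f_i(x_i')$ or $f_i(x_i) \adj_{\lambda_i} f_i(x_i')$, which is the continuity of $f_i$. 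For the local injectivity clause I would run the same construction: given $x_i', x_i'' \in N^*_{\kappa_i}(x_i)$ with $f_i(x_i')=f_i(x_i'')$, I embed them into tuples $p', p''$ lying in $N^*_L(p)$ (membership in the closed neighborhood only requires equality-or-adjacency in the single moving coordinate); then $f(p')=f(p'')$, and local injectivity of $f$ collapses them to $p'=p''$, hence $x_i'=x_i''$.

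The converse (second bullet) is where the hypotheses do real work, and is the step I expect to be the main obstacle. Given $L$-adjacent $p,p'$, let $k$ be the first coordinate where they differ, so $x_k \adj_{\kappa_k} x_k'$ and all earlier factors of $f(p)$ and $f(p')$ agree. The danger highlighted by Remark~\ref{gaps-allowed} is that the coordinates past $k$ are arbitrary, so if $f_k$ were to \emph{collapse} the pair $x_k, x_k'$ then the first disagreement of $f(p)$ and $f(p')$ could migrate to some later index $m$ where $f_m(x_m)$ and $f_m(x_m')$ are neither equal nor $\lambda_m$-adjacent, destroying $L$-adjacency. Local injectivity of $f_k$ is exactly what rules this out: since $x_k, x_k' \in N^*_{\kappa_k}(x_k)$ are distinct, we get $f_k(x_k) \neq f_k(x_k')$, and continuity of $f_k$ then upgrades this to $f_k(x_k) \adj_{\lambda_k} f_k(x_k')$. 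Thus $f(p)$ and $f(p')$ agree on coordinates $1,\ldots,k-1$ and are $\lambda_k$-adjacent in coordinate $k$, so $f(p) \adj_L f(p')$ by Definition~\ref{lexico}, completing the argument.
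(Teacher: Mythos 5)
Your proposal is correct and follows essentially the same route as the paper's proof: the same embedding of adjacent factor points into tuples differing at a single coordinate for the forward direction, and the same key step in the converse, where local injectivity of $f_k$ at the first index of disagreement forces $f_k(x_k) \adj_{\lambda_k} f_k(x_k')$ and thereby preserves $L$-adjacency. Your treatment of the locally one-to-one clause is in fact slightly more careful than the paper's (you verify injectivity of $f_i$ on the full closed neighborhood $N^*_{\kappa_i}(x_i)$ rather than only distinctness of images of adjacent points), but the argument is the same in substance.
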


\begin{proof}
Suppose $f$ is $(L(\kappa_1,\ldots,\kappa_v),L(\lambda_1,\ldots,\lambda_v))$-continuous.
Let $x_i,x_i' \in X_i$ such that
$x_i \adj_{\kappa_i} x_i'$. Let
$p_0=(x_1,x_2, \ldots, x_v)$ and 
let
\[ p_i = \left \{ \begin{array}{ll}
(x_1', x_2, \ldots, x_v) & \mbox{for } i=1; \\
(x_1, \ldots, x_{i-1}, x_i',
 x_{i+1}, \ldots, x_v) & \mbox{for } 1 < i < v; \\
 (x_1, \ldots, x_{v-1}, x_v') &
 \mbox{for } i=v.
\end{array} \right .   
\]
Notice
\begin{equation}
\label{Lex-observ}
\mbox{$p_0$ and $p_i$ differ
only at index $i$ and
$p_0 \adj_{L(\kappa_1,\ldots,\kappa_v)} p_i$ for $1 \le i \le v$.}
\end{equation}
Therefore,
$f(p_0)$ and $f(p_i)$ are
$L(\lambda_1,\ldots,\lambda_v)$-adjacent or equal. It follows from
statement~(\ref{Lex-observ}) that $f_i(x_i)$ and $f_i(x_i')$
are $\lambda_i$-adjacent or equal. Since $\{x_i,x_i'\}$ is
an arbitrary set of $\kappa_i$-adjacent members of
$X_i$, $f_i$ is $(\kappa_i,\lambda_i)$-continuous. Further
if $f$ is locally one-to-one,
then from statement~(\ref{Lex-observ}), $f_i(x_i)$ and $f_i(x_i')$
are not equal, so $f_i$ is locally one-to-one.

Suppose each $f_i$ is continuous
and locally one-to-one.
Let $p,p' \in X = \Pi_{i=1}^v X_i$, where
$p=(x_1,\ldots,x_v)$, $p'=(x_1',\ldots,x_v')$, for
$x_i,x_i' \in X_i$. Assume
$p \adj_{L(\kappa_1,\ldots,\kappa_v)} p'$. Let $k$ be
the smallest index such that
$x_k \adj_{\kappa_k} x_k'$.
Since $f_k$ is locally
one-to-one,
\begin{equation}
\label{adj-values}
f_k(x_k) \adj_{\lambda_k} f_k(x_k').
\end{equation}
\begin{itemize}
\item If $k=1$, it follows
from Definition~\ref{lexico} that
$f(p) \adj_{L(\lambda_1,\ldots,\lambda_v)} f(p')$.
\item Otherwise, $i < k$ implies
      $x_i = x_i'$, hence
      $f_i(x_i) = f_i(x_i')$.
      Together with statement~(\ref{adj-values}), this implies
      $f(p) \adj_{L(\lambda_1,\ldots,\lambda_v)} f(p')$.
\end{itemize}
Then $f$ is
$(L(\kappa_1,\ldots,\kappa_v),L(\lambda_1,\ldots,\lambda_v))$-continuous, since $p$ and $p'$ were arbitrarily chosen.
\end{proof}

The following example illustrates
the importance of the locally
one-to-one hypothesis in
Theorem~\ref{lexico-1-1}.

\begin{exl}
\label{lexico-factor-not-implies-prod}
Let $X_i = [0,i]_{\Z}$ for
$i \in \{1,2\}$. Let
$f: X_1 \to X_2$ be the constant function with value $0$.
Then $f$ and $1_{X_2}$ are
$(c_1,c_1)$ continuous.
However, $f \times 1_{X_2}: X_1 \times X_2 \to X_2^2$ is not
$(L(c_1,c_1),L(c_1,c_1))$-continuous.
\end{exl}

\begin{proof}
Consider the points $p=(0,0)$ and $p'=(1,2)$. These points are
$L(c_1,c_1)$-adjacent in $X_1 \times X_2$. However,
$(f \times 1_{X_2})(p) = (0,0)$
and $(f \times 1_{X_2})(p') = (0,2)$ are neither equal nor
$L(c_1,c_1)$-adjacent in $X_2^2$.
\end{proof}

\begin{thm}
\label{L-iso}
Suppose $f_i: (X_i,\kappa_i) \to (Y_i,\lambda_i)$ is a function
between digital images, $1 \le i \le v$. Let $f=\Pi_{i=1}^v f_i: \Pi_{i=1}^v X_i \to \Pi_{i=1}^v Y_i$ be the product function.
Then $f$ is an
$(L(\kappa_1,\ldots,\kappa_v),L(\lambda_1,\ldots,\lambda_v))$-isomorphism if and
only if each $f_i$ is a
$(\kappa_i,\lambda_i)$-isomorphism.
\end{thm}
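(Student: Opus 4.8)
The plan is to reduce everything to Theorem~\ref{lexico-1-1} together with the elementary fact that a product map $\Pi_{i=1}^v f_i$ is a bijection precisely when each factor $f_i$ is a bijection (assuming, as the paper does throughout, that the images are nonempty). The observation that makes Theorem~\ref{lexico-1-1} applicable in both directions is that any one-to-one function between digital images is automatically locally one-to-one, as noted after the definition of that term; thus whenever $f$ or the $f_i$ are bijections, the locally-one-to-one hypotheses required by Theorem~\ref{lexico-1-1} come for free. The structure will mirror that of Theorems~\ref{T-prod-cont} and~\ref{product-cont}.

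For the forward direction, I would assume $f$ is an $(L(\kappa_1,\ldots,\kappa_v),L(\lambda_1,\ldots,\lambda_v))$-isomorphism. Being a bijection, $f$ is one-to-one, hence locally one-to-one, and continuous, so the first bullet of Theorem~\ref{lexico-1-1} yields that each $f_i$ is $(\kappa_i,\lambda_i)$-continuous. Next I would argue each $f_i$ is a bijection: surjectivity follows by projecting preimages under the surjective $f$, and injectivity follows by comparing two points of $\Pi_{i=1}^v X_i$ that differ only in coordinate $i$ (here the nonemptiness of the remaining factors is used). Consequently $f^{-1}=\Pi_{i=1}^v f_i^{-1}$ is again a product map and is a bijection, hence locally one-to-one; since $f$ is an isomorphism, $f^{-1}$ is $L$-continuous, so applying the first bullet of Theorem~\ref{lexico-1-1} to $f^{-1}$ shows each $f_i^{-1}$ is $(\lambda_i,\kappa_i)$-continuous. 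Therefore each $f_i$ is a $(\kappa_i,\lambda_i)$-isomorphism.

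For the converse, I would assume each $f_i$ is a $(\kappa_i,\lambda_i)$-isomorphism. Then each $f_i$ is continuous and one-to-one, hence locally one-to-one, so the second bullet of Theorem~\ref{lexico-1-1} shows $f$ is $L$-continuous; and since each $f_i$ is a bijection, $f$ is a bijection with $f^{-1}=\Pi_{i=1}^v f_i^{-1}$. Each $f_i^{-1}$ is continuous and one-to-one, hence locally one-to-one, so a second application of the second bullet of Theorem~\ref{lexico-1-1} shows $f^{-1}$ is $L$-continuous. Thus $f$ is an isomorphism.

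The main obstacle is not any deep step but the bookkeeping around the locally-one-to-one hypothesis. Theorem~\ref{lexico-1-1} does not deduce continuity of the factors from continuity of the product without it, and Example~\ref{lexico-factor-not-implies-prod} shows the hypothesis cannot be dropped; so one must be careful to invoke the injectivity of $f$ and of $f^{-1}$ at exactly the right moments, and to confirm that transferring bijectivity between $f$ and its factors is legitimate under the standing nonemptiness assumption on digital images.
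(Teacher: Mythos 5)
Your proposal is correct and takes essentially the same approach as the paper: the paper's entire proof of Theorem~\ref{L-iso} is the sentence ``This follows easily from Theorem~\ref{lexico-1-1},'' and your argument---using the observation that one-to-one functions are automatically locally one-to-one so that both bullets of Theorem~\ref{lexico-1-1} apply to $f$, to $f^{-1}=\Pi_{i=1}^v f_i^{-1}$, and to the factors---is precisely the fleshed-out version of that reduction. Your explicit handling of the transfer of bijectivity between $f$ and the $f_i$ (including the nonemptiness assumption) supplies details the paper leaves implicit.
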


\begin{proof}
This follows easily from
Theorem~\ref{lexico-1-1}.
\end{proof}

\begin{prop}
The projection map
$p_1: (\Pi_{i=1}^v X_i, L(\kappa_1,\ldots,\kappa_v)) \to (X_1,\kappa_1)$ is continuous.
\end{prop}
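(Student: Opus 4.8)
The plan is to invoke the adjacency reformulation of continuity, Theorem~\ref{cont-by-adj}: rather than arguing about connected subsets directly, it suffices to show that whenever two points $p,p'$ of $\Pi_{i=1}^v X_i$ are $L(\kappa_1,\ldots,\kappa_v)$-adjacent, their images under $p_1$ are either equal or $\kappa_1$-adjacent in $X_1$. Write $p=(x_1,\ldots,x_v)$ and $p'=(x_1',\ldots,x_v')$, so that $p_1(p)=x_1$ and $p_1(p')=x_1'$.

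Next I would simply unwind Definition~\ref{lexico} and split into its two defining cases. If $x_1 \adj_{\kappa_1} x_1'$, then $p_1(p)=x_1$ and $p_1(p')=x_1'$ are $\kappa_1$-adjacent and we are done. Otherwise there is an index $j$ with $1 \le j < v$ such that $(x_1,\ldots,x_j)=(x_1',\ldots,x_j')$ and $x_{j+1}\adj_{\kappa_{j+1}}x_{j+1}'$; since $j \ge 1$, equality of the first $j$ coordinates already forces $x_1=x_1'$, so $p_1(p)=p_1(p')$. In both cases $p_1(p)$ and $p_1(p')$ are equal or $\kappa_1$-adjacent, and Theorem~\ref{cont-by-adj} then gives continuity.

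I do not expect a genuine obstacle here; the one point worth flagging is that the argument uses, in the second branch, only that equality of the initial block pins down the first coordinate, so the adjacency occurring at coordinate $j+1$ is never needed. This is precisely why the argument is special to $p_1$ and should not be expected to extend to the other projections $p_i$, $i>1$: by the gaps-allowed behavior noted in Remark~\ref{gaps-allowed}, later coordinates of $L$-adjacent points need be neither equal nor adjacent, so the naive analogue of this proof breaks down for $i>1$.
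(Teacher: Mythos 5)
Your proof is correct and is essentially the paper's own argument: the paper likewise unwinds Definition~\ref{lexico} to observe that $L$-adjacency of $p$ and $p'$ forces $x_1 = x_1'$ or $x_1 \adj_{\kappa_1} x_1'$, and then applies the adjacency criterion for continuity. Your added remark about why the argument is special to $p_1$ matches the paper's Example~\ref{L-projection-not-cont}, which shows the projections $p_i$ for $i>1$ are indeed not continuous.
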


\begin{proof}
Let $p \adj_{L(\kappa_1,\ldots,\kappa_v)} p'$ in $\Pi_{i=1}^v X_i$. Then 
$p=(x_1,\ldots,x_v)$,
$p'=(x_1',\ldots,x_v')$ for some
$x_i,x_i' \in X_i$, where
either $x_1=x_1'$ or $x_1 \adj_{\kappa_1} x_1'$. Since
$p_1(p)=x_1$ and $p_1(p')=x_1'$,
it follows that $p_1$ is continuous.
\end{proof}

By contrast, we have the following.

\begin{exl}
\label{L-projection-not-cont}
Let $v>1$. The projection maps
$p_i: ([0,2]_{\Z}^v,L(c_1,\ldots,c_1)) \to ([0,2]_{\Z},c_1)$ are not
continuous for $1<i\le v$.
\end{exl}

\begin{proof} Let
$x=(0,0,\ldots,0)$, $y=(1,2,\ldots,2)$. Then
$x \adj_{L(c_1,\ldots,c_1)} y$ in
$[0,2]_{\Z}^v$, but $i>1$ implies
$p_i(x)=0$ and $p_i(y)=2$, which 
are not $c_1$-adjacent in $[0,2]_{\Z}$. The assertion follows.
\end{proof}

\subsection{More on isomorphisms}
We have the following.

\begin{thm}
\label{permute}
Let
$\sigma: \{i\}_{i=1}^v \to \{i\}_{i=1}^v$ be a permutation.
Let $f_i: (X_i,\kappa_i) \to (Y_{\sigma(i)},\lambda_{\sigma(i)})$ be an isomorphism
of digital images, $1 \le i \le v$. Let $1 \le u \le v$. 
Let $(\kappa,\lambda)$ be any of
\[(NP_u(\kappa_1,\ldots,\kappa_v),NP_u(\lambda_{\sigma(1)},\ldots,\lambda_{\sigma(v)})), \]
\[(T(\kappa_1,\ldots,\kappa_v),T(\lambda_{\sigma(1)},\ldots,\lambda_{\sigma(v)})), \mbox{ or} \]
\[(\times_{i=1}^v \kappa_i,\times_{i=1}^v \lambda_{\sigma(i)}). \]
Let $X=\Pi_{i=1}^v X_i$, $Y= \Pi_{i=1}^v Y_{\sigma(i)}$.
Then the function
$f: X \to Y$ defined by
\[ f(x_1,\ldots,x_v)=(f_1(x_1), \ldots,(f_v(x_v))
\]
is an isomorphism.
\end{thm}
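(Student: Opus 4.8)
The plan is to observe first that the permutation $\sigma$ plays only a bookkeeping role. Since the $i$-th coordinate of $f(x_1,\ldots,x_v)$ is $f_i(x_i)\in Y_{\sigma(i)}$, which is exactly the $i$-th factor of $Y=\Pi_{i=1}^v Y_{\sigma(i)}$, the map $f$ is simply the product map of the isomorphisms $f_i$ into a relabeled target. Concretely, I would set $Z_i=Y_{\sigma(i)}$ and $\mu_i=\lambda_{\sigma(i)}$, so that each $f_i:(X_i,\kappa_i)\to(Z_i,\mu_i)$ is an isomorphism, $Y=\Pi_{i=1}^v Z_i$, and in every one of the three cases the target adjacency becomes the unpermuted product adjacency built from $\mu_1,\ldots,\mu_v$ (that is, $NP_u(\mu_1,\ldots,\mu_v)$, $T(\mu_1,\ldots,\mu_v)$, or $\times_{i=1}^v\mu_i$). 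This reduces the claim to the statement that a product of isomorphisms is an isomorphism for each of $NP_u$, $T$, and $\times$.

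First I would note that $f=\Pi_{i=1}^v f_i$ is a bijection, with inverse $\Pi_{i=1}^v f_i^{-1}$, since each $f_i$ is a bijection. The core of the argument is then a single coordinate-wise observation that applies uniformly to all three adjacencies. Because each $f_i$ is an isomorphism, for every pair $x_i,x_i'\in X_i$ we have $x_i=x_i'$ iff $f_i(x_i)=f_i(x_i')$ (bijectivity) and $x_i\adj_{\kappa_i}x_i'$ iff $f_i(x_i)\adj_{\mu_i}f_i(x_i')$ (isomorphism, in both directions). Thus $f_i$ preserves, in both directions, the status ``equal, adjacent, or neither'' of each coordinate pair.

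Next I would feed this observation into each adjacency definition, each of which is phrased purely in terms of these coordinate-wise statuses: for $T$ every coordinate pair must be adjacent; for $\times_{i=1}^v\kappa_i$ exactly one coordinate is adjacent and the rest equal; and for $NP_u$ between $1$ and $u$ coordinates are adjacent with all remaining coordinates equal. In each case the equivalences above give $p\adj_\kappa p'$ iff $f(p)\adj_\lambda f(p')$ for arbitrary $p,p'\in X$, and together with bijectivity this shows $f$ is an isomorphism. For the subcases $T$, $\times$, and $NP_v$ one may alternatively just invoke Theorems~\ref{T-prod-cont}, \ref{product-cont}, and~\ref{prod-iso} applied to the relabeled factors.

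The only point requiring care, rather than a genuine obstacle, is the $NP_u$ case for general $u$ with $1\le u\le v$: the earlier isomorphism result, Theorem~\ref{prod-iso}, supplies the implication ``each $f_i$ an isomorphism implies $f$ an isomorphism'' only for $u=v$. The uniform counting argument above handles every $u$ simultaneously, since preserving the coordinate-wise status in both directions automatically preserves both the number of adjacent coordinates and the requirement that all non-adjacent coordinates coincide. Everything else is routine verification from the definitions.
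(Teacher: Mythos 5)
Your proposal is correct and takes essentially the same route as the paper, whose proof simply notes that $f$ is bijective and that continuity of $f$ and $f^{-1}$ ``follows easily from the definitions of the adjacencies under discussion'' --- your relabeling $Z_i = Y_{\sigma(i)}$, $\mu_i = \lambda_{\sigma(i)}$ and the coordinate-wise ``equal/adjacent/neither'' status argument are exactly the details the paper leaves implicit. Your observation that Theorem~\ref{prod-iso} covers only $u=v$, so that general $NP_u$ needs the direct counting argument rather than a citation, is a valid point of care that the paper's terse proof glosses over.
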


\begin{proof}
It is easy to see that $f$ is one-to-one and onto. Continuity of $f$ and of 
$f^{-1}$ follows easily from the definitions of the adjacencies under
discussion. Thus, $f$ is an isomorphism.
\end{proof}

The following example shows that the
lexicographic adjacency does not yield
a conclusion analogous to that of 
Theorem~\ref{permute}.

\begin{exl}
\label{pretzel}
Let $X_1 = \{0,1\} \subset (\Z,c_1)$. Let $X_2 = \{0,2\}\subset (\Z,c_1)$. Then $X=(X_1 \times X_2, L(c_1,c_1))$ and $Y=(X_2 \times X_1, L(c_1,c_1))$ are not isomorphic.
\end{exl}

\begin{proof}
Observe that $X$ is connected, since the 4 points of $X$ form a path in the sequence
\[ (0,0),~(1,0),~(0,2),~(1,2)
\]
(see Figure~2).
However, $Y$ is not connected, as there
is no path in $Y$ from $(0,0)$ to $(2,0)$.
The assertion follows.
\end{proof}

\section{Connectedness}
In this section, we compare product 
adjacencies with respect to the
property of connectedness.

\begin{thm}
\label{prod-connected}
\rm{\cite{Boxer16a}}
Let $(X_i,\kappa_i)$ be digital images, $i \in \{1,2, \ldots, v\}$. Then
$(X_i,\kappa_i)$ is connected for all $i$ if and only
$(\Pi_{i=1}^v X_i, NP_v(\kappa_1, \ldots, \kappa_v))$ is connected. $\qed$
\end{thm}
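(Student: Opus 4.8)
The plan is to prove the two implications separately. For the reverse direction, I would assume that $(\Pi_{i=1}^v X_i, NP_v(\kappa_1,\ldots,\kappa_v))$ is connected and apply the continuity of the projections $p_i$ guaranteed by Theorem~\ref{projection-cont} in the case $u=v$. By Definition~\ref{continuous} a continuous map carries connected sets to connected sets, and each $p_i$ is onto $X_i$ (assuming, as throughout, that every factor is nonempty); hence $X_i = p_i\!\left(\Pi_{j=1}^v X_j\right)$ is $\kappa_i$-connected for each $i$.

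For the forward direction, I would assume each $(X_i,\kappa_i)$ is connected and build an explicit path between arbitrary $p=(x_1,\ldots,x_v)$ and $q=(x_1',\ldots,x_v')$. For every $i$, connectedness supplies a $\kappa_i$-path $x_i=x_i^{(0)}, x_i^{(1)}, \ldots, x_i^{(m_i)}=x_i'$ in $X_i$. Letting $m=\max_i m_i$ and padding each path by repeating its final point, I obtain, for every $i$, a length-$m$ sequence whose consecutive terms are equal or $\kappa_i$-adjacent. Reading these off coordinatewise produces points $P^{(k)}=(x_1^{(k)},\ldots,x_v^{(k)})$, $0\le k\le m$, with $P^{(0)}=p$ and $P^{(m)}=q$.

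The crux is to check that consecutive $P^{(k)}$ and $P^{(k+1)}$ are equal or $NP_v$-adjacent. In each coordinate $i$ the entries $x_i^{(k)}$ and $x_i^{(k+1)}$ are equal or $\kappa_i$-adjacent by construction, so if $P^{(k)}\neq P^{(k+1)}$ then at least one coordinate is $\kappa_i$-adjacent while all remaining coordinates are equal or $\kappa_i$-adjacent. This is exactly the defining condition of $NP_v$-adjacency in Definition~\ref{NP_u-def} with $u=v$. Discarding repeated consecutive points then yields a genuine $NP_v$-path from $p$ to $q$, so the product is connected.

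The step I expect to carry the weight is this last verification, since it depends essentially on $NP_v$ allowing all $v$ coordinates to advance simultaneously in a single step; this is precisely what lets the coordinate paths proceed in lockstep. I would stress that the coordinatewise ``equal or adjacent'' condition matches the definition of $NP_v$-adjacency exactly—were one to replace $NP_v$ by some $NP_u$ with $u<v$, no single step could move more than $u$ coordinates, and the synchronized path would instead have to be subdivided into substeps (though connectedness would still follow).
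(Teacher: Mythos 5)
Your proof is correct, but a direct comparison is not possible: the paper states Theorem~\ref{prod-connected} only as a quoted result from \cite{Boxer16a}, with no proof given in this document. The closest in-paper analogue is Theorem~\ref{product-Pi-conn}, the corresponding statement for the Cartesian product adjacency $\times_{i=1}^v \kappa_i = NP_1(\kappa_1,\ldots,\kappa_v)$, and there the author builds the path one coordinate at a time, using the injections $I_i$ of Proposition~\ref{natural-inject-product-cont} and concatenating the images $I_i(P_i)$, because $NP_1$ permits only one coordinate to change per step. Your lockstep construction is genuinely different and is tailored to $NP_v$: padding all coordinate paths to the common length $m=\max_i m_i$ and advancing every coordinate simultaneously yields a path of length at most $\max_i m_i$ rather than $\sum_i m_i$, and your verification that consecutive points $P^{(k)}, P^{(k+1)}$ are equal or $NP_v$-adjacent matches Definition~\ref{NP_u-def} with $u=v$ exactly (at least one and at most $v$ adjacent coordinates, all others equal); discarding consecutive repeats correctly reconciles this with the strict-adjacency form of connectedness used in Section~\ref{prelims}. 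Since $NP_1(\kappa_1,\ldots,\kappa_v) \ge_d NP_v(\kappa_1,\ldots,\kappa_v)$, the coordinate-at-a-time route of Theorem~\ref{product-Pi-conn} would also prove the forward direction here, so the two approaches are interchangeable for $NP_v$, with yours giving the shorter path and making transparent, as you note, exactly where $u=v$ is used. Your reverse direction, via continuity and surjectivity of the projections (Theorem~\ref{projection-cont} with $u=v$, plus Definition~\ref{continuous} and nonemptiness of the factors), is the standard argument and is also the one the paper uses for its analogues (e.g., Theorems~\ref{Pi-conn} and~\ref{product-Pi-conn}).
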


\begin{comment}
The statement analogous to Theorem~\ref{prod-connected}
is not generally true if a $c_u$-adjacency is
used instead of $NP_v(\kappa_1, \ldots, \kappa_v)$
for $X \times Y$, as shown by the following.

\begin{exl}
\rm{\cite{BoxKar12}} Let $X=[0,1]_{\Z}$,
$Y=\{(0,0),(1,1)\} \subset \Z^2$. Then
$X \times Y$ is $c_2$-connected, but $Y$ is not
$c_1$-connected. Also, $X$ is $c_1$-connected
and $Y$ is $c_2$-connected, but $X \times Y$ is
not $c_1$-connected. \qed
\end{exl}
\end{comment}

\begin{thm}
\label{Pi-conn}
Let $(X_i,\kappa_i)$ be digital images, $i \in \{1,2, \ldots, v\}$.
If 
$\Pi_{i=1}^v X_i$ is $T(\kappa_1, \ldots, \kappa_v)$-connected, then $X_i$ is $\kappa_i$-connected for all $i$.
\end{thm}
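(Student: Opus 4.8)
The plan is to use the projection maps, which we already know are continuous for the tensor product adjacency by Theorem~\ref{T-projection-cont}, together with the fact (Definition~\ref{continuous}) that a continuous function carries connected sets to connected sets. Since each projection $p_i \colon (\Pi_{j=1}^v X_j, T(\kappa_1,\ldots,\kappa_v)) \to (X_i,\kappa_i)$ is continuous and surjective, and $\Pi_{j=1}^v X_j$ is assumed $T$-connected, the image $p_i(\Pi_{j=1}^v X_j) = X_i$ is $\kappa_i$-connected. This handles every factor at once and is essentially immediate.

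The one point demanding care is surjectivity of $p_i$: the projection carries $\Pi_{j=1}^v X_j$ onto all of $X_i$ simply because every $x_i \in X_i$ appears as the $i$th coordinate of some tuple (take any choice of the remaining coordinates, which is possible as each $X_j$ is nonempty). So $p_i(\Pi_{j=1}^v X_j) = X_i$, and continuity applied to the connected set $\Pi_{j=1}^v X_j$ forces $X_i$ to be $\kappa_i$-connected.

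The main obstacle, such as it is, is the degenerate case where $\Pi_{j=1}^v X_j$ has no $T$-adjacent pairs at all, or a factor is empty; but $T$-connectedness of a product with more than one point already requires (by Proposition~\ref{no-1zs}) that each $X_i$ possess adjacent points, so no genuine difficulty arises. I would therefore simply invoke Theorem~\ref{T-projection-cont} and Definition~\ref{continuous}, noting that connectedness is preserved under continuous surjections.

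\begin{proof}
By Theorem~\ref{T-projection-cont}, each projection map
$p_i \colon (\Pi_{j=1}^v X_j, T(\kappa_1,\ldots,\kappa_v)) \to (X_i,\kappa_i)$
is continuous. Moreover $p_i$ is surjective: for any $x_i \in X_i$, choosing arbitrary coordinates $x_j \in X_j$ for $j \neq i$ yields a point of $\Pi_{j=1}^v X_j$ whose $i$th coordinate is $x_i$. Since $\Pi_{j=1}^v X_j$ is $T(\kappa_1,\ldots,\kappa_v)$-connected by hypothesis, Definition~\ref{continuous} gives that $p_i(\Pi_{j=1}^v X_j) = X_i$ is $\kappa_i$-connected. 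As $i$ was arbitrary, $X_i$ is $\kappa_i$-connected for all $i$.
\end{proof}
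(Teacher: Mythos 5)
Your proof is correct and follows exactly the paper's own argument: the paper proves Theorem~\ref{Pi-conn} by citing Definition~\ref{continuous} together with the continuity of the projections (Theorem~\ref{T-projection-cont}), which is precisely your route. The only difference is that you explicitly verify surjectivity of $p_i$, a detail the paper leaves implicit.
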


\begin{proof}
These assertions follow from
Definition~\ref{continuous} and Theorem~\ref{T-projection-cont}.
\end{proof}

However, the converse to
Theorem~\ref{Pi-conn} is not
generally true, as shown by the
following.

\begin{exl}
\label{conn-not-preserved}
Let $X = \{0\} \subset \Z$,
$Y = [0,1]_{\Z} \subset \Z$. Then
$X$ and $Y$ are each $c_1$-connected.
However:
\begin{itemize}
\item $X \times Y = \{(0,0), (0,1)\}$ is not
$T(c_1,c_1)$-connected.
\item $Y \times Y$ has two $T(c_1,c_1)$-components,
$\{(0,0), (1,1)\}$ and $\{(1,0),(0,1)\}$. $\qed$
\end{itemize}
\end{exl}

See also %Figure~\ref{msc8-fig}(c),
Figure~1(c),
which illustrates that $MSC_8 \times [0,1]_{\Z}$ is
not $T(c_2,c_1)$-connected, although $MSC_8$ is $c_2$-connected and
$[0,1]_{\Z}$ is $c_1$-connected.

For the Cartesian product adjacency, we have the following.

\begin{thm}
\label{product-Pi-conn}
Let $(X_i,\kappa_i)$ be digital images, $i \in \{1,2, \ldots, v\}$. Then
$\Pi_{i=1}^v X_i$ is $\times_{i=1}^v \kappa_i$-connected if and only if $X_i$ is $\kappa_i$-connected for all $i$.
\end{thm}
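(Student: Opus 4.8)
The plan is to prove the two implications separately, and in each direction to lean on the continuity results already established for the Cartesian product adjacency together with the basic fact that continuous functions carry connected sets to connected sets (Definition~\ref{continuous}).

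For the forward (``only if'') direction, I assume $\Pi_{i=1}^v X_i$ is $\times_{i=1}^v \kappa_i$-connected. By Theorem~\ref{product-projection-cont} each projection $p_i$ is continuous, so by Definition~\ref{continuous} the image $p_i(\Pi_{j=1}^v X_j)$ is $\kappa_i$-connected. Since each projection is onto $X_i$ (the factors being nonempty), this image is exactly $X_i$, and hence each $X_i$ is $\kappa_i$-connected. This direction is essentially immediate.

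For the reverse (``if'') direction, I assume each $(X_i,\kappa_i)$ is connected and let $p=(x_1,\ldots,x_v)$ and $q=(y_1,\ldots,y_v)$ be arbitrary points of the product; I would build a path from $p$ to $q$ by correcting one coordinate at a time. Define the intermediate points $q_0=p$ and $q_k=(y_1,\ldots,y_k,x_{k+1},\ldots,x_v)$ for $1 \le k \le v$, so that $q_v=q$. The points $q_{k-1}$ and $q_k$ differ only in coordinate $k$, where they read $x_k$ and $y_k$ respectively. Because $X_k$ is $\kappa_k$-connected, there is a $\kappa_k$-path from $x_k$ to $y_k$; inserting this path into coordinate $k$ while holding the remaining coordinates fixed yields a sequence each of whose consecutive pairs differs in coordinate $k$ alone by a $\kappa_k$-adjacency, hence is $\times_{i=1}^v \kappa_i$-adjacent or equal by Definition~\ref{product-adj-def}. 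Concatenating these segments over $k=1,\ldots,v$ produces a $\times_{i=1}^v \kappa_i$-path from $p$ to $q$. Equivalently, each such ``slice'' is the continuous image of the connected $X_k$ under one of the injections of Proposition~\ref{natural-inject-product-cont}, so it is connected; consecutive slices share the point $q_k$, and a finite union of connected adjacent sets is connected.

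I expect no genuine obstacle here: the only care needed is the bookkeeping that a single-coordinate move along an adjacency of a factor is exactly what the Cartesian product adjacency permits, together with the trivial edge case that nonemptiness of the factors guarantees surjectivity of the projections. The contrast with Theorem~\ref{Pi-conn} and Example~\ref{conn-not-preserved} is instructive: the tensor product forces all coordinates to change simultaneously, which can destroy connectedness, whereas the Cartesian product adjacency permits moving one coordinate at a time, which is precisely what makes the path construction succeed.
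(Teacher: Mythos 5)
Your proposal is correct and follows essentially the same route as the paper: the ``only if'' direction via the continuous projections of Theorem~\ref{product-projection-cont}, and the ``if'' direction by concatenating paths that correct one coordinate at a time using the injections of Proposition~\ref{natural-inject-product-cont}. If anything, your explicit intermediate points $q_k$ spell out the bookkeeping (which basepoints each injection must use) that the paper's phrase ``it is easily seen that $\bigcup_{i=1}^v I_i(P_i)$ is a path'' leaves implicit.
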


\begin{proof}
Suppose $X=\Pi_{i=1}^v X_i$ is $\times_{i=1}^v \kappa_i$-connected. It follows from Proposition~\ref{product-projection-cont} that each $X_i$ is
$\kappa_i$-connected.

Suppose each $X_i$ is $\kappa_i$-connected. Let $p = (x_1, \ldots, x_v)$
and $p' = (x_1', \ldots, x_v')$ be points of $X$ such that
$x_i, x_i' \in X_i$. There are $\kappa_i$-paths $P_i$ in $X_i$ from
$x_i$ to $x_i'$. If the functions $I_i$ are as in
Proposition~\ref{natural-inject-product-cont}, then it is easily seen that
$\bigcup_{i=1}^v I_i(P_i)$ is a $\times_{i=1}^v \kappa_i$-path in $X$ from
$p$ to $p'$. Since $p$ and $p'$ were arbitrarily
chosen, it follows that $X$ is $\times_{i=1}^v \kappa_i$-connected.
\end{proof}

\begin{prop}
\label{Lex-1st-factor}
Let $(X,\kappa)$ and $(Y,\lambda)$
be digital images, such that $|X| > 1$. Then
$(X \times Y, L(\kappa,\lambda))$
is connected if and only if
$(X,\kappa)$ is connected.
\end{prop}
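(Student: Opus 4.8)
The plan is to prove the two implications separately. For the direction asserting that connectedness of the product forces connectedness of $X$, I would invoke the continuity of the first projection. The map $p_1 : (X \times Y, L(\kappa,\lambda)) \to (X,\kappa)$ is continuous (as established for the lexicographic adjacency earlier) and is onto, since $Y$ is nonempty. Because a continuous function carries connected sets to connected sets (Definition~\ref{continuous}), connectedness of $X \times Y$ yields that $X = p_1(X \times Y)$ is $\kappa$-connected. This direction requires no use of the hypothesis $|X|>1$.

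The reverse direction is where the real work lies, and its crux is the following fiber observation, which is exactly where $|X|>1$ is used. Since $X$ is connected and contains more than one point, every $x \in X$ has a $\kappa$-neighbor $w$. Therefore, for any $y,y' \in Y$, the first coordinates in each step of the sequence $(x,y),\ (w,y'),\ (x,y')$ are $\kappa$-adjacent, and by Definition~\ref{lexico} first-coordinate adjacency alone suffices for $L(\kappa,\lambda)$-adjacency, irrespective of the second coordinates. Hence $(x,y)$ and $(x,y')$ lie in a common $L(\kappa,\lambda)$-component. In other words, each fiber $\{x\} \times Y$ is contained in a single component of the product, even though $Y$ itself need not be $\lambda$-connected.

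Granting this, I would finish by concatenating two paths. Given arbitrary $(x,y)$ and $(x',y')$, first move from $(x,y)$ to $(x,y')$ via the detour $(x,y),(w,y'),(x,y')$ above. Then, choosing a $\kappa$-path $x=z_0,z_1,\ldots,z_m=x'$ in the connected space $X$, observe that $(z_0,y'),(z_1,y'),\ldots,(z_m,y')$ is an $L(\kappa,\lambda)$-path, since consecutive $z_i$ are equal or $\kappa$-adjacent and hence the corresponding product points are equal or $L(\kappa,\lambda)$-adjacent. Splicing these shows $(x,y)$ and $(x',y')$ lie in the same component; as they were arbitrary, $X \times Y$ is connected.

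The principal obstacle is that $Y$ is not assumed connected, so one cannot in general pass between points of a fiber using $\lambda$-adjacencies. The detour-through-a-neighbor trick circumvents this by exploiting that $L(\kappa,\lambda)$-adjacency ignores the second coordinate once the first coordinates are $\kappa$-adjacent. This same feature explains why $|X|>1$ cannot be dropped: if $|X|=1$, then $(X \times Y, L(\kappa,\lambda))$ is isomorphic to $(Y,\lambda)$, whose connectedness is equivalent to that of $Y$ rather than of the (trivially connected) singleton $X$.
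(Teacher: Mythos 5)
Your proof is correct and follows essentially the same approach as the paper: the sufficiency direction uses exactly the paper's detour-through-a-$\kappa$-neighbor trick (made possible by $|X|>1$ and the fact that first-coordinate adjacency alone gives $L(\kappa,\lambda)$-adjacency), followed by a path in the first coordinate. For necessity you invoke the continuity of the projection $p_1$ (proved earlier in the paper) where the paper argues the contrapositive directly by observing no product path can cross between components of $X$ -- but these are the same observation packaged differently, so there is nothing substantively new or missing.
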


\begin{proof}
Suppose $(X,\kappa)$ is connected.
Let $p=(x,y)$, $p'=(x',y')$, with
$x,x' \in X$, $y,y' \in Y$.
\begin{itemize}
\item If $x=x'$ then, since $|X| > 1$ and $X$ is connected, there exists $x_0 \in X$ such that $x \adj_{\kappa} x_0$. Therefore,
$p \adj_{L(\kappa,\lambda)} (x_0,y) \adj_{L(\kappa,\lambda)}
(x,y')=p'$.
\item Suppose $x \neq x'$. Since
$X$ is connected, there
is a path in $X$, $P=\{x_i\}_{i=0}^n$, such that
\[x=x_0 \adj_{\kappa} x_1 \adj_{\kappa} \dots \adj_{\kappa} x_{n-1} \adj_{\kappa} x_n=x'.
\]
Therefore,
\[ p = (x_0,y) \adj_{L(\kappa,\lambda)} (x_1, y') \adj_{L(\kappa,\lambda)} (x_2,y')
\adj_{L(\kappa,\lambda)} \ldots \]
\[ \adj_{L(\kappa,\lambda)} (x_n,y')=p'.
\]
\end{itemize}
Therefore,
$(X \times Y, L(\kappa,\lambda))$
is connected.

Suppose $(X,\kappa)$ is not
connected. Then there exist
$x,x' \in X$ such that $x$ and
$x'$ are in distinct components
of $X$. Let $y,y' \in Y$. By
Definition~\ref{lexico},
there is no path in
$(X \times Y, L(\kappa,\lambda))$
from $(x,y)$ to $(x',y')$.
Therefore, $(X \times Y, L(\kappa,\lambda))$ is not connected.
\end{proof}

An argument similar to that used
for the proof of Proposition~\ref{Lex-1st-factor}
yields the following.

\begin{thm}
\label{lex-conn}
Let $(X_i,\kappa_i)$ be digital
images, $1 \leq i \le v$.
Suppose $k$ is the smallest
index for which $|X_k|>1$.
Then $(\Pi_{i=1}^v X_i, L(\kappa_1,\ldots,\kappa_v))$ is
connected if and only if
$(X_k,\kappa_k)$ is connected. $\qed$
\end{thm}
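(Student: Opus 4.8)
The plan is to generalize the two-case argument of Proposition~\ref{Lex-1st-factor} from two factors to $v$ factors, with the index $k$ playing the role that the first factor played there. The key structural observation is that in the lexicographic adjacency $L(\kappa_1,\ldots,\kappa_v)$, adjacency is decided by the first coordinate in which the two points differ via an adjacency (reading left to right): by Definition~\ref{lexico}, $p$ and $p'$ are adjacent precisely when there is some index $j$ with $(x_1,\ldots,x_{j-1})=(x_1',\ldots,x_{j-1}')$ and $x_j \adj_{\kappa_j} x_j'$, with no constraint on the coordinates after $j$ (as emphasized in Remark~\ref{gaps-allowed}). Since $k$ is the smallest index with $|X_k|>1$, every factor $X_i$ with $i<k$ is a singleton, so the first $k-1$ coordinates of every point of $\Pi_{i=1}^v X_i$ are forced to be equal across all points; thus the ``active'' leading coordinate is always coordinate $k$, and connectedness of the product is governed by $X_k$.

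For the forward direction (assuming $(X_k,\kappa_k)$ connected, show the product connected), I would mimic the two bullet points of Proposition~\ref{Lex-1st-factor} with coordinate $k$ in place of coordinate $1$. Given $p=(x_1,\ldots,x_v)$ and $p'=(x_1',\ldots,x_v')$, note $x_i=x_i'$ automatically for $i<k$. If $x_k=x_k'$, then since $|X_k|>1$ and $X_k$ is connected there is some $x_0 \in X_k$ with $x_k \adj_{\kappa_k} x_0$, and I would route through an intermediate point that changes coordinate $k$ to $x_0$ and then back, simultaneously allowed to alter the later coordinates freely because coordinate $k$ being adjacent suffices for $L$-adjacency. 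If $x_k \neq x_k'$, take a $\kappa_k$-path $x_k=z_0 \adj_{\kappa_k} z_1 \adj_{\kappa_k} \cdots \adj_{\kappa_k} z_n=x_k'$ in $X_k$ and build the product path whose coordinate-$k$ entries trace this path while the coordinates after $k$ jump to their $p'$-values after the first step; each consecutive pair is $L$-adjacent because coordinate $k$ is adjacent and coordinates before $k$ agree.

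For the converse (assuming the product connected, show $(X_k,\kappa_k)$ connected), I would argue contrapositively as in Proposition~\ref{Lex-1st-factor}: if $X_k$ were disconnected, pick $x_k,x_k'$ in distinct $\kappa_k$-components. Any $L(\kappa_1,\ldots,\kappa_v)$-path between points with these $k$-th coordinates must, at each step, either leave coordinate $k$ fixed (when adjacency is witnessed at an index $>k$, since indices $<k$ are singletons and cannot provide adjacency) or move coordinate $k$ along a $\kappa_k$-edge (when adjacency is witnessed at index $k$). Hence the sequence of $k$-th coordinates along the path is a $\kappa_k$-path in $X_k$, contradicting that $x_k,x_k'$ lie in different components; this shows no such product path exists, so the product is disconnected.

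The main obstacle — really the one point requiring care rather than difficulty — is the converse bookkeeping: one must verify that along any product path, adjacency witnessed at an index $j>k$ genuinely forces the $k$-th coordinate to be unchanged, which uses that indices $i<k$ give singletons (so $j$ cannot be $<k$) together with the defining condition $(x_1,\ldots,x_{j-1})=(x_1',\ldots,x_{j-1}')$ of Definition~\ref{lexico}, which for $j>k$ includes equality of coordinate $k$. This is precisely where the minimality of $k$ is essential, and it is the step I would write out most explicitly; the forward direction and the rest are routine adaptations of the proof of Proposition~\ref{Lex-1st-factor}.
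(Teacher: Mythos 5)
Your proposal is correct and matches the paper's approach exactly: the paper gives no separate proof for Theorem~\ref{lex-conn}, stating only that ``an argument similar to that used for the proof of Proposition~\ref{Lex-1st-factor}'' yields it, and your generalization -- with coordinate $k$ playing the role of the first factor, the singleton factors $X_i$ ($i<k$) unable to witness adjacency, and the careful observation that adjacency witnessed at an index $j>k$ forces equality of coordinate $k$ -- is precisely that argument, written out with the converse bookkeeping made explicit.
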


\section{Homotopy}
\subsection{Tensor product}
In~\cite{Boxer16a}, it is shown that many homotopy properties are
preserved by Cartesian products with the $NP_v$ adjacency. 
We show that we cannot make analogous claims for
the tensor product adjacency.

\begin{exl}
\label{htpy-not-preserved}
There are digital
images $(X_i,\kappa_i)$ and $(Y_i,\lambda_i)$ and continuous
functions $f_i,g_i: X_i \to Y_i$,
$i \in \{1,2\}$, such that
\[ f_i \simeq g_i \mbox{ but }
f_1 \times f_2 \not \simeq_{T(\kappa_1,\kappa_2),T(\lambda_1,\lambda_2)} g_1 \times g_2. \]
\end{exl}

\begin{proof}
We can use Example~\ref{conn-not-preserved}. E.g., if
$X_1=X_2=Y_1=Y_2=[0,1]_{\Z}$,
$f_1=f_2: X_1 \to Y_1$ is the identity
function, and $g_1=g_2: X_2 \to Y_2$ is the constant function taking the 
value 0, we have $f_1 \simeq_{c_1,c_1}g_1$ and 
$f_2 \simeq_{c_1,c_1}g_2$. As
we saw in Example~\ref{conn-not-preserved}, $[0,1]_{\Z}^2$
is not $T(c_1,c_1)$-connected, so its identity function $f_1 \times f_2$ is not 
homotopic to the constant function $g_1 \times g_2$.
\end{proof}

\begin{exl}
\label{htpy-type-not-preserved}
There are digital
images $(X_i,\kappa_i)$ and $(Y_i,\lambda_i)$ for $i \in \{1,2\}$, such that
$X_i$ and $Y_i$ have the same homotopy type,
but $(X_1 \times X_2,T(\kappa_1,\lambda_1))$ and 
$(Y_1 \times Y_2,T(\kappa_2,\lambda_2))$ do not have
the same homotopy type.
\end{exl}

\begin{proof}
We saw in Example~\ref{conn-not-preserved} that
$[0,1]_{\Z}^2$ is not $T(c_1,c_1)$-connected; however,
it is trivial that $\{0\}^2 = \{(0,0)\}$ is
$T(c_1,c_1)$-connected. Therefore, we can
take $X_1 = X_2 = [0,1]_{\Z} \subset (\Z,c_1)$,
$Y_1=Y_2=\{0\}\subset (\Z,c_1)$.
\end{proof}

\subsection{Cartesian product adjacency}
\begin{thm}
\label{Cart-prod-htpy}
Let $f_i, g_i: (X_i,\kappa_i) \to (Y_i,\lambda_i)$ be continuous functions between digital images, $1 \le i \le v$. Let $X = \Pi_{i=1}^v X_i$,
$Y  = \Pi_{i=1}^v Y_i$,
$f=\Pi_{i=1}^v f_i: X \to Y$, $g=\Pi_{i=1}^v g_i: X \to Y$.
Then $f \simeq_{\times_{i=1}^v \kappa_i, \times_{i=1}^v \lambda_i} g$ if
and only if for
all~$i$, $f_i \simeq_{\kappa_i,\lambda_i} g_i$. Further, $f$ and $g$ are pointed homotopic
if and only if for each~$i$, $f_i$ and $g_i$ are pointed homotopic.
\end{thm}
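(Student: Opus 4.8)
The plan is to prove both biconditionals---unpointed and pointed---by handling the two directions separately and, for the pointed version, by tracking base points through the same constructions. The harder direction is that factorwise homotopy $f_i \simeq_{\kappa_i,\lambda_i} g_i$ implies $f \simeq g$, and here I would \emph{not} use the naive simultaneous product homotopy. If $H_i : X_i \times [0,m_i]_\Z \to Y_i$ is a homotopy from $f_i$ to $g_i$, then the obvious candidate $(x_1,\dots,x_v,t)\mapsto (H_1(x_1,t),\dots,H_v(x_v,t))$ fails: for fixed $x$, advancing $t$ by one may change every coordinate at once, whereas the Cartesian product adjacency $\times_{i=1}^v \lambda_i$, which by Proposition~\ref{CartesianAndNP} equals $NP_1(\lambda_1,\dots,\lambda_v)$, allows adjacent points to differ in only one coordinate. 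This is the main obstacle, and the remedy is to homotope one factor at a time. I would introduce the intermediate maps $h_0=f$, $h_k=(g_1,\dots,g_k,f_{k+1},\dots,f_v)$, $h_v=g$, and for each $k$ exhibit a homotopy $G^{(k)}:X\times[0,m_k]_\Z\to Y$ from $h_{k-1}$ to $h_k$ defined by
\[ G^{(k)}(x_1,\dots,x_v,t)=\bigl(g_1(x_1),\dots,g_{k-1}(x_{k-1}),\,H_k(x_k,t),\,f_{k+1}(x_{k+1}),\dots,f_v(x_v)\bigr). \]
Since only the $k$-th coordinate depends on $t$, each slice $(G^{(k)})_x$ is a path in $(Y,\times_{i=1}^v\lambda_i)$ (consecutive values differ in at most the $k$-th coordinate, where they are $\lambda_k$-adjacent or equal), and each time-level $(G^{(k)})_t$ is a product of continuous maps, hence continuous by Theorem~\ref{Cart-prod-cont}. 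Concatenating $G^{(1)},\dots,G^{(v)}$ via transitivity of homotopy (Proposition~\ref{htpy-equiv-rel}) yields $f=h_0\simeq h_v=g$; note the differing lengths $m_k$ cause no difficulty, as concatenation accommodates them automatically.

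For the converse---$f\simeq g$ implies each $f_i\simeq g_i$---I would slice and project. Fix base points $a_j\in X_j$ for $j\ne i$ and let $I_i:X_i\to X$ be the continuous injection of Proposition~\ref{natural-inject-product-cont} placing its argument in the $i$-th slot and $a_j$ elsewhere. Given a homotopy $F$ from $f$ to $g$, set $H_i(x,t)=p_i\bigl(F(I_i(x),t)\bigr)$, where $p_i$ is the continuous projection of Theorem~\ref{product-projection-cont}. The endpoint conditions are immediate; each slice $(H_i)_x=p_i\circ F_{I_i(x)}$ is a path and each level $(H_i)_t=p_i\circ F_t\circ I_i$ is continuous, both by Theorem~\ref{composition}. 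Hence $H_i$ is a homotopy from $f_i$ to $g_i$.

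For the pointed statement I would run the identical constructions with base points aligned. In the forward direction, if each $H_k$ holds $x_k^0$ fixed, then $G^{(k)}$ holds $x_0=(x_1^0,\dots,x_v^0)$ fixed, since its only $t$-dependent coordinate is frozen; moreover pointedness forces $f_k(x_k^0)=g_k(x_k^0)$, so every intermediate $h_k$ agrees with $f$ at $x_0$ and the concatenation is a legitimate pointed homotopy. In the converse direction I would take the injection base points to be $a_j=x_j^0$, so that $I_i(x_i^0)=x_0$ and $H_i(x_i^0,t)=p_i(F(x_0,t))$ is constant in $t$ whenever $F$ holds $x_0$ fixed. The only real care needed is the bookkeeping that the intermediate maps $h_k$ share the common base value $f(x_0)=g(x_0)$, which is exactly what pointedness of the factor homotopies supplies.
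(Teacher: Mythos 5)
Your proposal is correct and follows essentially the same route as the paper: the converse direction uses the same composition $p_i\circ F(I_i(\cdot),\cdot)$ with the injection of Proposition~\ref{natural-inject-product-cont} and projection of Theorem~\ref{product-projection-cont}, and the forward direction is the paper's ``one coordinate at a time'' homotopy, merely packaged as a concatenation of $v$ homotopies through the intermediate maps $h_k$ rather than as a single piecewise formula on $[0,\sum_k m_k]_\Z$. Your explicit observation that pointedness of the factor homotopies forces $f_k(x_k^0)=g_k(x_k^0)$, so the intermediate maps all agree at the base point, is a slightly more careful justification of the pointed case than the paper gives.
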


\begin{proof}
Suppose $f \simeq_{\times_{i=1}^v \kappa_i, \times_{i=1}^v \lambda_i} g$. Then 
there is a homotopy
\[ H: \Pi_{i=1}^v X_i \times [0,m]_{\Z} \to \Pi_{i=1}^v X_i \]
such that $H(p,0)=f(p)$ and
$H(p,m)=g(p)$ for all $p \in X$.
Let $x_i \in X_i$ and let $H_i: X_i \times [0,m]_{\Z} \to Y_i$ be defined by
\[ H_i(x,t) = p_i(H(I_i(x),t)),
\]
where $I_i$ is the continuous injection of Proposition~\ref{natural-inject-product-cont} corresponding to the point $(x_1,\ldots,x_v)\in X$ and $p_i$ is the continuous
projection map of Theorem~\ref{product-projection-cont}. Then
\[ H_i(x,0)=p_i(f(I_i(x))=f_i(x) \mbox{ and } H_i(x,m)=p_i(g(I_i(x))=g_i(x).
\]
Since the composition of continuous functions is continuous (Theorem~\ref{composition}), it follows that
$H_i$ is a homotopy from $f_i$ to $g_i$.
Further, if $H$ holds some point $p_0$ of $X$ fixed, then we can take $p_0=(x_1,\ldots,x_v)$ to be
the point of $X$ used in Proposition~\ref{natural-inject-product-cont},
and we can conclude that $H_i$ holds $p_i(p)=x_i$ fixed.

Suppose for all~$i$, $f_i \simeq_{\kappa_i,\lambda_i} g_i$. Let
$H_i: X_i \times [0,m_i]_{\Z} \to Y_i$ be a $(\kappa_i,\lambda_i)$-homotopy
from $f_i$ to $g_i$. We execute these homotopies ``one coordinate at a time," as
follows. For $x=(x_1,\ldots, x_v) \in X$ such that
$x_i \in X_i$, let $M_i = \sum_{k=1}^i m_i$ for all $i$ and let
$H: X \times [0, M_v]_{\Z} \to Y$ be defined by $H(x_1,\ldots,x_v,t) =$
\begin{itemize}
\item $(H_1(x_1,t),f_2(x_2), \ldots, f_v(x_v))$ if $0 \le t \le m_1$;
\item $(g_1(x_1) \ldots, g_{j-1}(x_{j-1}), H_j(x_j,t-M_{j-1}), f_{j+1}(x_{j+1}),
      \ldots, f_v(x_v))$ if $M_{j-1} \le t \le M_j$;
\item $(g_1(x_1) \ldots, g_{v-1}(x_{v-1}), H_v(x_v,t-M_{v-1}))$ if
      $M_{j-1} \le t \le M_j$.
\end{itemize}
It is easily seen that $H$ is well defined and is a homotopy from $f$ to $g$.

Further, if $H_i$ holds $x_i$ fixed, then $H$ holds $x$ fixed.
\end{proof}

\begin{cor}
\label{product-adj-ptd-htpy-equiv}
Let $(X_i,\kappa_i)$ and $(Y_i,\lambda_i)$
be digital images, $1 \le i \le v$.
Then $X=\Pi_{i=1}^v X_i$ and $Y=\Pi_{i=1}^v Y_i$ are $(\times_{i=1}^v \kappa_i, \times_{i=1}^v \lambda_i)$-(pointed) homotopy equivalent if and only if
for each $i$, $(X_i,\kappa_i)$ and $(Y_i,\lambda_i)$ are
(pointed) homotopy equivalent.
\end{cor}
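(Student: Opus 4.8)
The plan is to prove the two directions separately, with Theorem~\ref{Cart-prod-htpy} doing the work on homotopies and Theorem~\ref{Cart-prod-cont} guaranteeing continuity of whatever maps I build. Throughout I would treat the pointed and unpointed cases in parallel, invoking the ``Further'' clause of Theorem~\ref{Cart-prod-htpy} only when base points are in play.

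First I would dispatch the ``if'' direction, which I expect to be routine. Assume each $(X_i,\kappa_i)$ and $(Y_i,\lambda_i)$ are (pointed) homotopy equivalent, witnessed by continuous $f_i:X_i\to Y_i$ and $g_i:Y_i\to X_i$ with $g_i\circ f_i\simeq 1_{X_i}$ and $f_i\circ g_i\simeq 1_{Y_i}$. Set $f=\Pi_{i=1}^v f_i$ and $g=\Pi_{i=1}^v g_i$, both continuous for the Cartesian product adjacencies by Theorem~\ref{Cart-prod-cont}. The key observation is that a composition of product maps is again a product map: $g\circ f=\Pi_{i=1}^v(g_i\circ f_i)$ and $1_X=\Pi_{i=1}^v 1_{X_i}$. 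Since $g_i\circ f_i\simeq 1_{X_i}$ for every $i$, Theorem~\ref{Cart-prod-htpy} promotes this to $g\circ f\simeq 1_X$ for $\times_{i=1}^v\kappa_i$, and symmetrically $f\circ g\simeq 1_Y$. For the pointed version I would pick $a_i\in X_i$, $b_i=f_i(a_i)$ with $g_i(b_i)=a_i$ and pointed factor homotopies, so that the pointed clause of Theorem~\ref{Cart-prod-htpy} yields homotopies fixing $(a_1,\dots,a_v)$ and $(b_1,\dots,b_v)$.

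The ``only if'' direction is where I expect the real difficulty. We are handed continuous $F:X\to Y$ and $G:Y\to X$ with $G\circ F\simeq 1_X$ and $F\circ G\simeq 1_Y$, but these maps need not be product maps, so Theorem~\ref{Cart-prod-htpy} does not apply to them directly. The natural attempt is to manufacture factor maps by sandwiching with the continuous projections $p_i$ of Theorem~\ref{product-projection-cont} and the continuous injections $I_i$ of Proposition~\ref{natural-inject-product-cont}: after fixing base points, set $f_i=p_i^Y\circ F\circ I_i^X$ and $g_i=p_i^X\circ G\circ I_i^Y$, each continuous by Theorem~\ref{composition}. One then tries to verify $g_i\circ f_i\simeq 1_{X_i}$ by restricting the given homotopy $H:X\times[0,m]_{\Z}\to X$ to the slice $I_i^X(X_i)$ and projecting, i.e. by examining $(x,t)\mapsto p_i^X(H(I_i^X(x),t))$, which does equal $1_{X_i}$ at $t=0$ since $p_i^X\circ I_i^X=1_{X_i}$.

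The obstruction, and the step I expect to be genuinely hard, is that this projected homotopy only delivers $1_{X_i}\simeq p_i^X\circ(G\circ F)\circ I_i^X$, whereas the composite $g_i\circ f_i$ carries an extra $I_i^Y\circ p_i^Y$ inserted in the middle (because $f_i$ lands in $Y_i$ rather than in all of $Y$). Since $I_i^Y\circ p_i^Y$ is not homotopic to $1_Y$ in general, a product slice being merely a retract rather than a deformation retract unless the remaining factors are contractible, the telescoping that worked in the ``if'' direction collapses; the argument as stated establishes only that each $X_i$ is a homotopy retract of $Y$ and each $Y_i$ a homotopy retract of $X$. Closing this gap is the crux: I would expect to need either an additional hypothesis forcing $F$ and $G$ to respect the product structure (so that they agree up to homotopy with product maps, allowing Theorem~\ref{Cart-prod-htpy} to be invoked) or a more delicate control of the off-diagonal coordinates of $F\circ I_i^X$, and I would flag this as the point most in need of scrutiny.
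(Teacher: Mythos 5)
Your ``if'' direction is exactly the paper's intended argument and is complete: continuity of $f=\Pi_{i=1}^v f_i$ and $g=\Pi_{i=1}^v g_i$ comes from Theorem~\ref{Cart-prod-cont}, the identities $g\circ f=\Pi_{i=1}^v(g_i\circ f_i)$ and $1_X=\Pi_{i=1}^v 1_{X_i}$ exhibit the relevant composites as product maps, and Theorem~\ref{Cart-prod-htpy} (with its pointed clause) promotes the factor homotopies to the product. Nothing to criticize there.

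The obstruction you flag in the ``only if'' direction is genuine --- and it is a defect of the corollary itself, not of your attempt. The paper's entire proof is the single sentence ``This follows from Theorem~\ref{Cart-prod-htpy},'' but that theorem applies only to product maps, and a homotopy equivalence $F:X\to Y$ need not be (or be homotopic to) a product map; your sandwich $f_i=p_i\circ F\circ I_i$ indeed yields only that each $X_i$ is a homotopy retract of $Y$ and vice versa, because of the inserted $I_i\circ p_i$, exactly as you say. In fact no additional control of the off-diagonal coordinates can close the gap, because the ``only if'' direction is false as stated: take $v=2$, let $C$ be a non-contractible digital simple closed curve (e.g., $MSC_8$), and set $X_1=\{x_0\}$, $X_2=C$, $Y_1=C$, $Y_2=\{x_0\}$. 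The transposition $(a,b)\mapsto(b,a)$ is a $(\kappa_1\times\kappa_2,\lambda_1\times\lambda_2)$-isomorphism $X\to Y$ (an instance of Theorem~\ref{permute}; indeed a singleton factor is absorbed, so both products are isomorphic to $C$), hence $X$ and $Y$ are even pointed homotopy equivalent, yet $X_1$ is contractible while $Y_1$ is not, so $X_1\not\simeq Y_1$. So your instinct that extra hypotheses would be needed is vindicated in the strongest way: the biconditional is safe only in the product-map formulation of Theorem~\ref{Cart-prod-htpy}, and the corollary's ``only if'' direction, together with the paper's one-line derivation of it, is simply wrong.
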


\begin{proof}
This follows from Theorem~\ref{Cart-prod-htpy}
\end{proof}

\subsection{Lexicographic adjacency}
\begin{thm}
\label{lex-factor-and-prod}
Let $(X_i,\kappa_i)$ be digital
images for $1 \le i \le v$.
Let $X = \Pi_{i=1}^v X_i$.
If there is a smallest index~$k$
such that $|X_k|>1$, then
$(X,L(\kappa_1,\ldots,\kappa_v))$ and $(X_k, \kappa_k)$
have the same pointed homotopy type.
\end{thm}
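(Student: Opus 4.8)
The plan is to realize the projection onto the $k$-th factor and a section of it as a pair of pointed homotopy inverses. First I record the structure forced by the hypothesis: for $i<k$ each $X_i$ is a singleton $\{a_i\}$, so every point of $X$ has the form $p=(a_1,\dots,a_{k-1},x_k,x_{k+1},\dots,x_v)$ with $x_j\in X_j$; write $a_{<k}$ for the forced initial segment $a_1,\dots,a_{k-1}$. Unwinding Definition~\ref{lexico} under this constraint, the first coordinate in which two adjacent points may differ (rather than agree) is coordinate $k$ or later: any $L(\kappa_1,\dots,\kappa_v)$-adjacent $p,p'$ satisfy either $x_k \adj_{\kappa_k} x_k'$, with the later coordinates completely unconstrained as in Remark~\ref{gaps-allowed}, or $x_k=x_k'$.

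Next I would define $P:X\to X_k$ by $P(p)=x_k$ and, after fixing a basepoint $x_0\in X_k$ and points $b_j\in X_j$ for $j>k$, a section $S:X_k\to X$ by $S(x)=(a_1,\dots,a_{k-1},x,b_{k+1},\dots,b_v)$, taking the basepoint of $X$ to be $S(x_0)$. The dichotomy above makes $P$ continuous via Theorem~\ref{cont-by-adj}, since adjacent points of $X$ have $k$-th coordinates that are equal or $\kappa_k$-adjacent; and if $x\adj_{\kappa_k}x'$ then $S(x),S(x')$ differ only in coordinate $k$ and hence are $L$-adjacent, so $S$ is continuous. Clearly $P\circ S=\id_{X_k}$, so $P\circ S\simeq\id_{X_k}$ by the constant homotopy, which holds $x_0$ fixed.

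The heart of the argument is a pointed homotopy $S\circ P\simeq \id_X$. Here $S\circ P$ sends $p$ to $(a_{<k},x_k,b_{k+1},\dots,b_v)$, i.e. it keeps the $k$-th coordinate and resets every later coordinate to its fixed value $b_j$. The mechanism for resetting is exactly the gaps-allowed phenomenon of Remark~\ref{gaps-allowed}: changing $x_k$ to an adjacent value lets us simultaneously replace all of $x_{k+1},\dots,x_v$ by arbitrary values and still keep an $L$-edge. Since $X_k$ is connected with more than one point, every $x_k$ has a $\kappa_k$-neighbor, so I would move $p$ in two steps, $p \to (a_{<k},\phi(x_k),b_{k+1},\dots,b_v) \to (a_{<k},x_k,b_{k+1},\dots,b_v)=S(P(p))$, where $\phi(x_k)$ is a neighbor of $x_k$; both steps are single $L$-edges by the gaps-allowed property, and each time-slice is $(L,L)$-continuous provided $\phi$ is a continuous neighbor assignment on $X_k$.

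The main obstacle is precisely this last clause: turning the collapse into an honest homotopy, i.e. producing the intermediate time-slice as a globally continuous self-map of $X$ while holding the basepoint $S(x_0)$ fixed. Continuity of the intermediate slice reduces to the continuity of $\phi:X_k\to X_k$ (with $\phi(x)\adj_{\kappa_k}x$), which is where the connectivity of $X_k$, equivalently of $X$ by Theorem~\ref{lex-conn}, must be invoked; I would build $\phi$ from a connectedness walk in $X_k$, and where a single such $\phi$ is awkward I would instead collapse the later factors one at a time, interleaving short coordinate-$k$ moves so that each slice changes only in coordinates $\ge k$. Pointedness needs extra care, since the naive two-step homotopy displaces the basepoint; I would remedy this by routing the moves so that the base fiber over $x_0$ is carried into the slice first and thereafter held fixed, so that $S(x_0)$ never moves. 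The verification that every intermediate map is $(L,L)$-continuous (Theorem~\ref{cont-by-adj}) and that each track $t\mapsto H(p,t)$ is a path is then the routine but delicate remainder; granting it, $P$ and $S$ are pointed homotopy inverses and $(X,L(\kappa_1,\dots,\kappa_v))$ and $(X_k,\kappa_k)$ have the same pointed homotopy type.
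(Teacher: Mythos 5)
Your skeleton (the projection $P$, the section $S$, the identity $P\circ S=\id_{X_k}$, and a homotopy $S\circ P\simeq 1_X$ that resets the coordinates beyond $k$) is the same as the paper's, but at the crucial step the routes diverge: the paper collapses the later coordinates in a \emph{single} time step, $H(p,1)=(x_1,\ldots,x_{k-1},y_k,x_{k+1},\ldots,x_v)$, with no neighbor detour and no appeal to connectivity, whereas you insert the two-edge detour through $(a_1,\ldots,a_{k-1},\phi(x_k),b_{k+1},\ldots,b_v)$. Your instinct that the direct one-step track is not an $L$-edge is correct: the two endpoints agree through coordinate $k$, so by Definition~\ref{lexico} the first coordinate beyond $k$ at which they differ would have to be adjacent, which is not guaranteed. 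But your repair rests on a map $\phi:X_k\to X_k$ that is continuous and satisfies $x\adj_{\kappa_k}\phi(x)$ for every $x$, and no such object is available under the stated hypotheses. The theorem does not assume $X_k$ is connected, nor even that each of its points has a neighbor: for $X_k=\{0,2\}\subset(\Z,c_1)$ there is no $\phi$ at all. Indeed, at the stated level of generality no argument can succeed: taking $v=2$ and $X_1=X_2=\{0,2\}$ with $c_1$, the product carries no $L(c_1,c_1)$-adjacencies whatsoever, so every homotopy on it has constant tracks; a homotopy equivalence with $X_1$ would then force $g\circ f=1_X$, i.e.\ an injection of four points into two. Your proposal silently upgrades the hypothesis to ``$X_k$ connected'' (you even say the connectivity of $X_k$ ``must be invoked''), which changes the statement being proved.

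Even granting connectivity of $X_k$, the existence of $\phi$ is a genuine combinatorial problem, not the ``routine but delicate remainder'' you defer. Let $X_k$ be a $5$-cycle and a $6$-cycle wedged at a vertex $s$. Then $\phi(s)$ must be one of the four neighbors of $s$, and in each case continuity ($\phi(u)$ and $\phi(v)$ equal or adjacent whenever $u\adj v$) combined with the requirement $\phi(x)\in N(x)$ forces the values of $\phi$ step by step around the \emph{other} cycle, until the closing edge at $s$ is sent to a pair of points that are neither equal nor adjacent; so no such $\phi$ exists on this connected graph. Your fallback --- collapsing the later factors one at a time with interleaved coordinate-$k$ moves --- does not obviously escape this: when some $X_j$ with $j>k$ is disconnected, the only way to move between its components is to piggyback on an adjacent move in coordinate $k$, which reinstates the need for a coherent continuous choice of such moves; and your basepoint-preserving ``routing'' is likewise named but never constructed. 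So the proposal is a program rather than a proof: its two load-bearing steps (a continuous neighbor assignment, and pointed routing) are exactly where it can fail. It is worth saying that the paper's own proof simply declares the one-step track continuity ``easily seen,'' without addressing the issue you identified; your analysis has therefore isolated precisely the point at which Theorem~\ref{lex-factor-and-prod} requires hypotheses beyond those stated, even though your attempted repair does not close the gap.
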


\begin{proof}
For each $i \neq k$, let $x_i \in X_i$. Let
$I_k: X_k \to X$ be the injection of Proposition~\ref{natural-inject-product-cont}.
By choice of $k$, $I_k$ is
$(\kappa_k,L(\kappa_1,\ldots,\kappa_v))$-continuous. Also by
choice of $k$, the projection
map
$p_k:(X,L(\kappa_1,\ldots,\kappa_v)) \to (X_k, \kappa_k)$
is continuous. Notice
$p_k \circ I_k = 1_{X_k}$. Also,
the function
$H: X \times [0,1]_{\Z} \to X$
defined for $p=(y_1,\ldots,y_v) \in X$ with $y_i \in X_i$ by
\[ H(p,t) = \left \{ \begin{array}{ll}
p & \mbox{if } t=0; \\
(y_1,x_2, \ldots, x_v) & \mbox{if } t=1 \mbox{ and } k =1; \\
(x_1,\ldots,x_{k-1},y_k,x_{k+1},\ldots,x_v) & \mbox{if } t=1 \mbox{ and } 1 < k < v; \\
(x_1, \ldots, x_{v-1}, y_v) & \mbox{if } t=1 \mbox{ and } k=v,
\end{array} \right .
\]
is easily seen from the choice of~$k$ to be a homotopy
from $1_X$ to $I_k \circ p_k$ that holds fixed the point
$(x_1,\ldots,x_v)$. The
assertion follows.
\end{proof}

\begin{cor}
Let $(X,\kappa)$ and $(Y,\lambda)$ be digital images
of different homotopy types.
If $|X|>1$ and $|Y|>1$, then
$(X \times Y, L(\kappa,\lambda))$
and $(Y \times X, L(\lambda,\kappa))$ have different homotopy types.
\end{cor}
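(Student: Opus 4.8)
The plan is to reduce this to Theorem~\ref{lex-factor-and-prod}, applied to each of the two products separately, together with transitivity of the relation ``has the same homotopy type'' (an equivalence relation by Proposition~\ref{htpy-equiv-rel}).

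First I would apply Theorem~\ref{lex-factor-and-prod} to $(X \times Y, L(\kappa,\lambda))$. Here $X$ is the first factor and $Y$ the second, and the hypothesis $|X|>1$ ensures that the smallest index $k$ with a nontrivial factor is $k=1$. The theorem then gives that $(X \times Y, L(\kappa,\lambda))$ and $(X,\kappa)$ have the same pointed homotopy type, and hence the same (unpointed) homotopy type. Next I would apply the same theorem to $(Y \times X, L(\lambda,\kappa))$: now $Y$ is the first factor, and $|Y|>1$ again forces the smallest nontrivial index to be $k=1$, so $(Y \times X, L(\lambda,\kappa))$ has the same homotopy type as $(Y,\lambda)$.

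Finally I would finish by contradiction. Suppose the two products $(X \times Y, L(\kappa,\lambda))$ and $(Y \times X, L(\lambda,\kappa))$ had the same homotopy type. Chaining this with the two equivalences just obtained would give $(X,\kappa) \simeq (Y,\lambda)$, contradicting the hypothesis that $X$ and $Y$ have different homotopy types. Therefore the two products must have different homotopy types.

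I expect no genuine obstacle: all the substantive content is carried by Theorem~\ref{lex-factor-and-prod}, and the only point requiring care is the bookkeeping of which image occupies the first coordinate in each product --- precisely the asymmetry that the hypotheses $|X|>1$ and $|Y|>1$ are there to handle, since they guarantee that in both orderings the determining factor is the first one.
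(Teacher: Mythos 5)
Your proposal is correct and matches the paper's proof, which simply states that the corollary ``follows immediately from Theorem~\ref{lex-factor-and-prod}''; your write-up spells out exactly the intended details (applying the theorem with $k=1$ to each ordering and invoking transitivity of homotopy type). No gaps.
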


\begin{proof}
This follows immediately from
Theorem~\ref{lex-factor-and-prod}.
\end{proof}

\begin{cor}
Let $(X_i,\kappa_i)$ and
$(Y_i, \lambda_i)$ be digital
images, $1 \leq i \le v$. Let
$X = \Pi_{i=1}^v X_i$,
$Y = \Pi_{i=1}^v Y_i$.
Suppose there exist a smallest index $j$ such that $|X_j|>1$, and
a smallest index
$k$ such that $|Y_k|>1$. If
$(X_j, \kappa_j)$ and
$(Y_k, \kappa_k)$ have the same
(pointed) homotopy type, then
$(X, L(\kappa_1,\ldots,\kappa_v))$ and
$(Y, L(\lambda_1,\ldots,\lambda_v))$ have the same (pointed) homotopy type.
\end{cor}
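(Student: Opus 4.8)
The plan is to reduce each product to its ``first nontrivial factor'' by invoking Theorem~\ref{lex-factor-and-prod} twice, and then to chain the resulting homotopy equivalences together by transitivity.

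First I would apply Theorem~\ref{lex-factor-and-prod} to $X = \Pi_{i=1}^v X_i$, using its smallest index $j$ for which $|X_j| > 1$. This yields that $(X, L(\kappa_1,\ldots,\kappa_v))$ and $(X_j,\kappa_j)$ have the same pointed homotopy type. Applying the same theorem to $Y = \Pi_{i=1}^v Y_i$, with its smallest index $k$ for which $|Y_k| > 1$, gives that $(Y, L(\lambda_1,\ldots,\lambda_v))$ and $(Y_k,\lambda_k)$ have the same pointed homotopy type.

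Next, by hypothesis $(X_j,\kappa_j)$ and $(Y_k,\lambda_k)$ have the same (pointed) homotopy type. Since, as noted following Definition~\ref{htpy-type}, having the same pointed homotopy type is an equivalence relation among pointed digital images, I would chain the three equivalences
\[ (X, L(\kappa_1,\ldots,\kappa_v)) \simeq (X_j,\kappa_j) \simeq (Y_k,\lambda_k) \simeq (Y, L(\lambda_1,\ldots,\lambda_v)) \]
and conclude by transitivity that $(X, L(\kappa_1,\ldots,\kappa_v))$ and $(Y, L(\lambda_1,\ldots,\lambda_v))$ have the same pointed homotopy type. For the unpointed statement, I would simply forget the basepoints, or equivalently invoke transitivity of the unpointed homotopy-type equivalence relation; since Theorem~\ref{lex-factor-and-prod} already delivers pointed equivalences, the unpointed case requires no extra work.

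I expect no real obstacle here: the content is carried entirely by Theorem~\ref{lex-factor-and-prod}, and the corollary is a formal consequence of that theorem together with transitivity. The only point demanding minor care is the bookkeeping of basepoints—one must observe that Theorem~\ref{lex-factor-and-prod} produces a \emph{pointed} equivalence, holding fixed a chosen point $(x_1,\ldots,x_v)$, so that the pointed hypothesis on $(X_j,\kappa_j) \simeq (Y_k,\lambda_k)$ can be legitimately composed across the chain without any mismatch of distinguished points.
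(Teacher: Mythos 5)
Your proposal is correct and matches the paper's proof exactly: the paper likewise applies Theorem~\ref{lex-factor-and-prod} to each of $X$ and $Y$ to reduce to $(X_j,\kappa_j)$ and $(Y_k,\lambda_k)$, then concludes by transitivity of (pointed) homotopy type. Your added remark on basepoint bookkeeping is sound, since the homotopy in Theorem~\ref{lex-factor-and-prod} fixes any point of the embedded copy of the nontrivial factor, so the basepoints can be chosen to match those of the hypothesized equivalence.
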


\begin{proof}
By Theorem~\ref{lex-factor-and-prod},
$(X, L(\kappa_1,\ldots,\kappa_v))$ and 
$(X_j,\kappa_j)$ have the same pointed homotopy type, and
$(Y_k,\lambda_k)$ and $(Y,L(\lambda_1,\ldots,\lambda_v))$ have the same pointed homotopy type. Since we also have
assumed $(X_j,\kappa_j)$ and $(Y_k,\lambda_k)$ have the same (pointed) homotopy type, the
assertion follows from the
transitivity of (pointed) homotopy type.
\end{proof}

\section{Retractions}
\begin{definition}
\rm{\cite{Borsuk,Boxer94}}
\label{retract-def}
Let $Y \subset (X,\kappa)$. A $(\kappa,\kappa)$-continuous
function $r: X \to Y$ is a {\em retraction}, and 
$A$ is a {\em retract of} $X$, if $r(y)=y$ for all
$y \in Y$. \qed
\end{definition}

\begin{thm}
\label{ret-thm}
\rm{\cite{BoxSta16}}
Let $A_i \subset (X_i,\kappa_i)$, $i \in \{1, \ldots, v\}$. Then
$A_i$ is a retract of $X_i$ for all $i$ if and only if
$\Pi_{i=1}^v A_i$ is a retract of 
$(\Pi_{i=1}^v X_i, NP_v(\kappa_1, \ldots, \kappa_v))$.
$\qed$
\end{thm}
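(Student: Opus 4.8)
The plan is to prove the two implications separately. The forward direction is a direct application of the product-continuity theorem, while the backward direction requires a construction that extracts a retraction on each factor from the given product retraction; this second direction is where the real content lies.

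For the forward direction, suppose each $A_i$ is a retract of $X_i$, witnessed by a continuous retraction $r_i: (X_i,\kappa_i) \to (A_i,\kappa_i)$ fixing $A_i$ pointwise. I would form the product map $r = \Pi_{i=1}^v r_i$. Since each $r_i$ is continuous, Theorem~\ref{prod-cont} yields that $r: (\Pi_{i=1}^v X_i, NP_v(\kappa_1,\ldots,\kappa_v)) \to (\Pi_{i=1}^v A_i, NP_v(\kappa_1,\ldots,\kappa_v))$ is continuous. Because each $r_i$ fixes $A_i$ pointwise, $r$ fixes $\Pi_{i=1}^v A_i$ pointwise, so $r$ is a retraction and $\Pi_{i=1}^v A_i$ is a retract of the product.

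For the backward direction, suppose $R: (\Pi_{i=1}^v X_i, NP_v(\kappa_1,\ldots,\kappa_v)) \to \Pi_{i=1}^v A_i$ is a retraction. Fixing, for each $i$, a base point $a_j \in A_j$ for every $j \neq i$, I would define the injection $I_i: X_i \to \Pi_{i=1}^v X_i$ sending $x$ to the tuple having $x$ in coordinate $i$ and $a_j$ in each coordinate $j \neq i$. Directly from Definition~\ref{NP_u-def}, if $x \adj_{\kappa_i} x'$ then $I_i(x)$ and $I_i(x')$ differ only in coordinate $i$, where they are $\kappa_i$-adjacent, so they are $NP_v$-adjacent; hence $I_i$ is $(\kappa_i, NP_v(\kappa_1,\ldots,\kappa_v))$-continuous. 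Composing with $R$ and then with the projection $p_i$, continuous by Theorem~\ref{projection-cont}, produces $r_i = p_i \circ R \circ I_i: X_i \to A_i$, which is continuous by Theorem~\ref{composition}. To check that $r_i$ is a retraction, I would observe that for $a \in A_i$ the point $I_i(a)$ lies in $\Pi_{i=1}^v A_i$, since every coordinate has been chosen in its corresponding $A_j$; thus $R$ fixes $I_i(a)$, and applying $p_i$ returns $a$, giving $r_i(a) = a$.

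The main obstacle is the backward direction, and the crucial design choice there is to draw the fixed coordinates $a_j$ from the $A_j$ rather than from arbitrary points of $X_j$: this is precisely what forces $I_i(a)$ into the retract $\Pi_{i=1}^v A_i$, so that $R$ leaves it fixed and the composition collapses to the identity on $A_i$. The nonemptiness of each $A_j$ needed to choose the $a_j$ is automatic whenever $\Pi_{i=1}^v A_i$ is a nonempty retract. The remaining continuity bookkeeping—confirming that $I_i$ respects the $NP_v$ adjacency and assembling the composition—is routine once this choice is fixed.
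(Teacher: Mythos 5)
Your proof is correct. The paper imports this theorem from \cite{BoxSta16} without reproving it, but your argument is precisely the template the paper itself uses for the analogous retraction theorem under the Cartesian product adjacency: the forward direction via the product-continuity theorem (here Theorem~\ref{prod-cont}), and the backward direction via $r_i = p_i \circ R \circ I_i$ with the fixed coordinates of the injection $I_i$ chosen in the $A_j$ so that $R$ fixes $I_i(a)$ --- including your key observation that this choice of base points is what makes $r_i$ restrict to the identity on $A_i$.
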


\subsection{Tensor product adjacency}
The following example shows that one of the
assertions obtained by using the tensor product
adjacency rather than $NP_v$ in Theorem~\ref{ret-thm} is not generally valid.

\begin{exl}
Let $X=\{(0,0),~(1,0),~(1,1)\}
\subset (\Z^2,c_2)$. Observe that
$X'=\{(0,0),~(1,0)\}$ is a
$c_2$-retract of $X$, and
$\{0\}$ is a $c_1$-retract of
$[0,1]_{\Z}$. However,
$X' \times \{0\}$ is not a
$T(c_2,c_1)$-retract of
$X \times [0,1]_{\Z}$.
\end{exl}

\begin{proof} Note
$X \times [0,1]_{\Z}$ is
$T(c_2,c_1)$-connected, since
\[(0,0,0),~(1,0,1),~(1,1,0),~(0,0,1),~(1,0,0),~(1,1,1)
\]
is a listing of its points in a
$T(c_2,c_1)$-path; but
$X' \times \{0\}= \{(0,0,0), (1,0,0)\}$ is not
$T(c_2,c_1)$-connected. The assertion follows.
\end{proof}

The question of whether
$\Pi_{i=1}^v A_i$ being a retract of 
$(\Pi_{i=1}^v X_i, T(\kappa_1, \ldots, \kappa_v))$ implies $A_i$ is a
$\kappa_i$-retract of $X_i$, for all~$i$,
is unknown at the current writing.

\subsection{Cartesian product adjacency}
For the Cartesian product adjacency, we have the following analog of Theorem~\ref{ret-thm}.

\begin{thm}
Suppose $A_i \subset (X_i,\kappa_i)$. 
Let $X=\Pi_{i=1}^v X_i$,
$A=\Pi_{i=1}^v A_i$.
Then there is a retraction $r_i: X_i \to A_i$, $1 \le i \le v$
if and only if there is a retraction
$r: (X, \times_{i=1}^v \kappa_i) \to
(A, \times_{i=1}^v \kappa_i)$.
\end{thm}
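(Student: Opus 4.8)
The plan is to treat the two implications separately, using the product-map continuity result (Theorem~\ref{Cart-prod-cont}) for the forward direction and the injection/projection maps (Proposition~\ref{natural-inject-product-cont} and Theorem~\ref{product-projection-cont}) together with composition of continuous functions (Theorem~\ref{composition}) for the converse. Throughout I would note that the Cartesian product adjacency $\times_{i=1}^v\kappa_i$ restricted to $A=\Pi_{i=1}^v A_i$ agrees with the Cartesian product adjacency built from the subspace adjacencies $\kappa_i$ on the $A_i$, so there is no conflict in speaking of $(\times_{i=1}^v\kappa_i,\times_{i=1}^v\kappa_i)$-continuity of a map $X\to A$. In either direction the existence of the relevant retraction forces the factors $A_i$ to be nonempty, which is all that is needed for the base-point choices below.

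For the forward direction, suppose each $r_i\colon X_i\to A_i$ is a retraction. I would set $r=\Pi_{i=1}^v r_i\colon X\to A$, so that $r(x_1,\dots,x_v)=(r_1(x_1),\dots,r_v(x_v))$. Since each $r_i$ is continuous, Theorem~\ref{Cart-prod-cont} gives at once that $r$ is $(\times_{i=1}^v\kappa_i,\times_{i=1}^v\kappa_i)$-continuous. It then remains only to check that $r$ fixes $A$ pointwise: for $(a_1,\dots,a_v)\in A$ we have $a_i\in A_i$, hence $r_i(a_i)=a_i$ and therefore $r(a_1,\dots,a_v)=(a_1,\dots,a_v)$. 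Thus $r$ is a retraction. This half is essentially immediate once Theorem~\ref{Cart-prod-cont} is invoked.

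For the converse, suppose $r\colon(X,\times_{i=1}^v\kappa_i)\to(A,\times_{i=1}^v\kappa_i)$ is a retraction. I would fix base points $a_j\in A_j$ for each $j$ and, for each fixed $i$, use the injection $I_i\colon X_i\to X$ of Proposition~\ref{natural-inject-product-cont} determined by these base points, namely $I_i(x)=(a_1,\dots,a_{i-1},x,a_{i+1},\dots,a_v)$. Define $r_i=p_i\circ r\circ I_i\colon X_i\to A_i$, where $p_i$ is the projection of Theorem~\ref{product-projection-cont}; the image lies in $A_i$ because $r$ maps into $A=\Pi_{i=1}^v A_i$. Continuity of $r_i$ follows by composing three continuous maps via Theorem~\ref{composition}. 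The retraction property is the key computation: for $a\in A_i$, the point $I_i(a)=(a_1,\dots,a_{i-1},a,a_{i+1},\dots,a_v)$ lies in $A$ because each $a_j\in A_j$ and $a\in A_i$, so $r$ fixes it, and applying $p_i$ returns $a$; hence $r_i(a)=a$.

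The only genuine subtlety, and the step I expect to require the most care, is the converse: one must choose the base points of the injections $I_i$ inside the $A_j$ rather than merely in the $X_j$, so that each test point $I_i(a)$ actually lies in $A$ and is therefore fixed by $r$. This is exactly what upgrades $r_i$ from a mere continuous map into $A_i$ to an honest retraction. Everything else reduces to the already-established continuity of products, projections, injections, and compositions, so no further obstacle is anticipated.
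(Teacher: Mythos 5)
Your proposal is correct and follows essentially the same route as the paper: the product map $\Pi_{i=1}^v r_i$ with Theorem~\ref{Cart-prod-cont} for the forward direction, and $r_i = p_i \circ r \circ I_i$ with the injections' base points chosen in the $A_j$ for the converse. The subtlety you flag --- that the base points must lie in $A_j$ so that $I_i(a)$ is fixed by $r$ --- is exactly the condition the paper imposes on the $x_i$ of Proposition~\ref{natural-inject-product-cont}.
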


\begin{proof}
Suppose there is a retraction $r_i: X_i \to A_i$, $1 \le i \le v$. Let $r = \Pi_{i=1}^v r_i: X \to A$.
Clearly $r(x) \in A$ for all $x \in X$, and $r(a)=a$ for all $a \in A$.
By Theorem~\ref{Cart-prod-cont},
$r$ is continuous.
Therefore, $r$ is a retraction.

Conversely, suppose there exists a retraction
$r: (X, \times_{i=1}^v \kappa_i) \to (A, \times_{i=1}^v \kappa_i)$.
Let $r_i = p_i \circ r \circ I_i:
(X_i,\kappa_i) \to (A_i,\kappa_i)$, where $I_i$ is
the injection of Proposition~\ref{natural-inject-product-cont} and the $x_i$ of 
Proposition~\ref{natural-inject-product-cont} satisfies $x_i \in A_i$.
Since composition
preserves continuity, Theorem~\ref{product-projection-cont} and
Proposition~\ref{natural-inject-product-cont}
imply $r_i$ is continuous. Further, for $a_i \in A_i$ we clearly have
$r_i(a_i)=a_i$. Thus, $r_i$ is a retraction.
\end{proof}

\subsection{Lexicographic adjacency}
For the lexicographic adjacency, we do not have an
analog of Theorem~\ref{ret-thm},
as shown by the following example.

\begin{exl}
$\{0\}$ is a $c_1$-retract of
$[0,1]_{\Z}$ and $[1,4]_{\Z}$ is
a $c_1$-retract of $[0,5]_{\Z}$.
However, $A=\{0\} \times [1,4]_{\Z}$ is
not an $L(c_1,c_1)$-retract of
$X=[0,1]_{\Z} \times [0,5]_{\Z}$.
\end{exl}

\begin{proof} We give a proof by contradiction.
Suppose there is an $L(c_1,c_1)$-retraction $r:
[0,1]_{\Z} \times [0,5]_{\Z} \to
\{0\} \times [1,4]_{\Z}$. Notice
$p=(0,1) \adj_{L(c_1,c_1)} (1,5)=p'$.
Since $r(p)=p$, the continuity of $r$
requires that $r(p')=p$ or $r(p') \adj_{L(c_1,c_1)} p$, hence
\[ r(p') \in \{p,(0,2)\}.
\]
But also
$p' \adj_{L(c_1,c_1)} (0,4)=q$, and since $r(q)=q$, the continuity of $r$
similarly requires that
\[ r(p') \adj_{L(c_1,c_1)} \{q,(0,3)\}.
\]
Therefore,
\[ r(p') \in \{p,(0,2)\} \cap \{q,(0,3)\} = \emptyset.\]
Since this is impossible, no such
retraction $r$ can exist.
\end{proof}

\section{Approximate fixed point property}
Some material in this section is quoted or
paraphrased from~\cite{Boxer16a,BEKLL}.

In both topology and digital topology,
\begin{itemize}
\item a {\em fixed point} of a continuous function
      $f: X \to X$ is a point $x \in X$ satisfying $f(x)=x$;
\item if every continuous $f: X \to X$ has a fixed point,
      then $X$ has the {\em fixed point property} (FPP).
\end{itemize}
However, a digital image $X$ has the FPP if and only
if $X$ has a single point~\cite{BEKLL}. Therefore, it
turns out that the {\em approximate fixed point property} is
more interesting for digital images.

\begin{definition}
\rm{\cite{BEKLL}}
\label{approxFP}
A digital image $(X,\kappa)$ has the
{\em approximate fixed point property (AFPP)} if every
continuous $f: X \to X$ has an {\em approximate fixed point},
i.e., a point $x \in X$ such that $f(x)$ is equal or
$\kappa$-adjacent to $x$. \qed
\end{definition}

\begin{comment}
A number of results concerning the AFPP were presented
in~\cite{BEKLL}, including the following.

\begin{thm}
\rm{\cite{BEKLL}}
\label{AFPP-iso}
Suppose $(X,\kappa)$ has the AFPP. Let $h : X \to Y$ be a
$(\kappa,\lambda)$-isomorphism. Then
$(Y,\lambda)$ has the AFPP. \qed
\end{thm}

\begin{thm}
\rm{\cite{BEKLL}}
\label{AFPP-ret}
Suppose $Y$ is a retract of $(X,\kappa)$. If $(X,\kappa)$
has the AFPP, then $(Y,\kappa)$ has the AFPP. \qed
\end{thm}

\begin{comment}The AFPP is related to the notion of
a {\em universal function}.

\begin{definition}
\rm{\cite{BEKLL}}
\label{univ-f}
Let $(X,\kappa)$ and $(Y,\lambda)$ be digital images.
A continuous function $f: X \to Y$ is {\em universal for}
$(X,Y)$ if for every continuous $g: X \to Y$ there exists
$x \in X$ such that $f(x)$ and $g(x)$ are equal or
$\lambda$-adjacent. \qed
\end{definition}

We have the following.

\begin{thm}
\rm{\cite{BEKLL}}
\label{AFPP-and-id}
Let $(X, \kappa )$ be a digital image. Then $(X, \kappa )$
has the AFPP if and only if the identity
function $1_X$ is universal for $(X,X)$. \qed
\end{thm}
\end{comment}

The following is a minor generalization 
of Theorem~5.10 of~\cite{BEKLL}.

\begin{thm}
\rm{\cite{Boxer16a}}
\label{NP-AFPP}
Let $(X_i,\kappa_i)$ be digital images, $1 \leq i \leq v$.
Then for any $u \in \Z$ such that
$1 \leq u \leq v$, if
$(\Pi_{i=1}^v X_i, NP_u(\kappa_1, \ldots, \kappa_v))$
has the AFPP then
$(X_i,\kappa_i)$ has the AFPP for all $i$.
\end{thm}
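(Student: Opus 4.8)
The plan is to prove the contrapositive: assuming some factor fails the AFPP, I will build a continuous self-map of the product with no approximate fixed point. So suppose $(X_j,\kappa_j)$ lacks the AFPP for some index $j$. Then there is a $(\kappa_j,\kappa_j)$-continuous $g_j: X_j \to X_j$ such that for every $x \in X_j$, $g_j(x)$ is neither equal nor $\kappa_j$-adjacent to $x$. I would define $G: \Pi_{i=1}^v X_i \to \Pi_{i=1}^v X_i$ by $G = \Pi_{i=1}^v f_i$ with $f_j = g_j$ and $f_i = 1_{X_i}$ for $i \neq j$; explicitly $G(x_1,\ldots,x_v) = (x_1,\ldots,x_{j-1},g_j(x_j),x_{j+1},\ldots,x_v)$. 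The goal is to show that $G$ is $NP_u$-continuous and has no approximate fixed point, which contradicts the assumed AFPP of the product.

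First I would verify that $G$ is $(NP_u(\kappa_1,\ldots,\kappa_v),NP_u(\kappa_1,\ldots,\kappa_v))$-continuous using the adjacency criterion of Theorem~\ref{cont-by-adj}. Let $p=(x_1,\ldots,x_v)$ and $p'=(x_1',\ldots,x_v')$ be $NP_u$-adjacent. By Definition~\ref{NP_u-def} (or Proposition~\ref{CartesianAndNP} in the case $u=1$) they differ in a set $S$ of coordinates with $1 \le |S| \le u$, and $x_i \adj_{\kappa_i} x_i'$ for each $i \in S$. Applying $G$: every coordinate $i \notin S$ is unchanged since $x_i = x_i'$; every coordinate $i \in S$ with $i \neq j$ uses the identity and so remains $\kappa_i$-adjacent; and the coordinate $j$, if it lies in $S$, produces $g_j(x_j)$ and $g_j(x_j')$, which are equal or $\kappa_j$-adjacent by continuity of $g_j$. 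Hence $G(p)$ and $G(p')$ differ in a subset $S' \subseteq S$ of coordinates, each with $\kappa_i$-adjacent entries, and $|S'| \le |S| \le u$. If $S' = \emptyset$ then $G(p)=G(p')$; otherwise $G(p) \adj_{NP_u} G(p')$. Either way continuity follows from Theorem~\ref{cont-by-adj}.

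Next I would show $G$ has no approximate fixed point. Suppose $q=(x_1,\ldots,x_v)$ were one, so $G(q)$ is equal or $NP_u$-adjacent to $q$. Since $G$ leaves every coordinate $i \neq j$ literally fixed, $G(q)$ and $q$ agree outside coordinate $j$, so the only coordinate in which they can differ is $j$. If $G(q)=q$ then $g_j(x_j)=x_j$; if $G(q)\adj_{NP_u} q$ then the unique differing coordinate is $j$ and $g_j(x_j)\adj_{\kappa_j} x_j$. In both cases $x_j$ would be an approximate fixed point of $g_j$, contradicting the choice of $g_j$. Thus $G$ has no approximate fixed point, the product lacks the AFPP, and the contrapositive is complete.

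The crux — and the only step where the argument could break — is the claim that equality or $NP_u$-adjacency between $q$ and $G(q)$ is forced to be witnessed \emph{entirely in coordinate $j$}. This is precisely why it matters that $G$ sends every other coordinate to itself exactly, not merely to an adjacent point: exact fixing pins all possible disagreement to coordinate $j$ and lets an approximate fixed point of $G$ descend to one for $g_j$. This asymmetry also indicates why only the factor-property direction is asserted here, the reverse product-property direction being considerably more delicate.
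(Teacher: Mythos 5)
Your proof is correct and uses the same construction as the paper's source (the theorem is quoted here from \cite{Boxer16a} without proof, but compare the in-paper proof of Theorem~\ref{L-AFPP}): extend a continuous self-map of one factor to the product by taking the identity on all other factors, then observe that an approximate fixed point of the product map projects to one for the factor map. Your contrapositive framing is just a trivial repackaging of that direct argument, and your verification of $NP_u$-continuity and of the pinning of any disagreement to coordinate $j$ is sound.
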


Determining whether
analogs of Theorem~\ref{NP-AFPP} for the
tensor product adjacency, or for the
Cartesian product adjacency, are generally
true, appear to be difficult problems.
The following examples show that the
analogs of converses to Theorem~\ref{NP-AFPP} for the
tensor product adjacency and for the
Cartesian product adjacency are not
generally true.

\begin{exl} Although $([0,1]_{\Z},c_1)$ has the AFPP~\cite{Rosenfeld}, $([0,1]_{\Z}^2,T(c_1,c_1))$ does not have
the AFPP.
\end{exl}

\begin{proof}
Consider the function $f: [0,1]_{\Z}^2 \to [0,1]_{\Z}^2$ defined by
$f(a,b)=(1-a,b)$, i.e.,
\[f(0,0)=(1,0),~~f(0,1)=(1,1),~~f(1,0)=(0,0),~~f(1,1)=(0,1).\]
One can easily check that $f$ is continuous and has no approximate fixed
point when the $T(c_1,c_1)$ adjacency is used.
\end{proof}

\begin{exl} Although $([0,1]_{\Z},c_1)$ has the AFPP, $([0,1]_{\Z}^2,c_1 \times c_1)$ does not have the AFPP.
\end{exl}

\begin{proof}
Consider the function $f: [0,1]_{\Z}^2 \to [0,1]_{\Z}^2$ defined by
$f(a,b)=(1-a,1-b)$, i.e.,
\[f(0,0)=(1,1),~~f(0,1)=(1,0),~~f(1,0)=(0,1),~~f(1,1)=(0,0).\]
One can easily check that $f$ is continuous and has no approximate fixed
point when the $c_1 \times c_1$ adjacency is used.
\end{proof}

We have the following.

\begin{thm}
\label{L-AFPP}
Let $(X_i,\kappa_i)$ be digital images,
$1 \le i \le v$. Suppose there is a smallest
index~$k$ such that $X_k$ is $\kappa_k$-connected and $|X_k|>1$. If 
the product $(\Pi_{i=1}^v X_i, L(\kappa_1,\ldots,\kappa_v))$ has the AFPP property, then $(X_k,\kappa_k)$ has the AFPP property.
\end{thm}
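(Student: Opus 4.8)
The plan is to transfer the AFPP from the product to the factor $(X_k,\kappa_k)$ by the usual ``build a self-map and read off a fixed point'' strategy, but with a construction tailored to the lexicographic adjacency. Assume $(\Pi_{i=1}^v X_i, L(\kappa_1,\ldots,\kappa_v))$ has the AFPP and let $g\colon (X_k,\kappa_k)\to (X_k,\kappa_k)$ be an arbitrary continuous map; the goal is to produce an approximate fixed point for $g$ in the sense of Definition~\ref{approxFP}. I would \emph{not} use the projection $p_k$, since under the present hypotheses a factor $X_j$ with $j<k$ may have $|X_j|>1$ (it is merely disconnected), so $p_k$ need not be continuous (compare Example~\ref{L-projection-not-cont}; this is exactly where these hypotheses differ from those of Theorem~\ref{lex-factor-and-prod}). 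Instead, after fixing a basepoint $x_i\in X_i$ for each $i>k$, I would define $G\colon \Pi_{i=1}^v X_i \to \Pi_{i=1}^v X_i$ by
\[ G(y_1,\ldots,y_v) = (y_1,\ldots,y_{k-1},\, g(y_k),\, x_{k+1},\ldots,x_v), \]
which leaves the first $k-1$ coordinates untouched, applies $g$ in coordinate $k$, and collapses every later coordinate to its basepoint. Keeping the early coordinates as the identity is the crucial device that avoids the discontinuity of $p_k$.

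First I would verify that $G$ is $(L,L)$-continuous via Theorem~\ref{cont-by-adj}. Given $y \adj_L y'$, let $a$ be the first coordinate in which they differ; by Definition~\ref{lexico} coordinates $1,\ldots,a-1$ agree and $y_a \adj_{\kappa_a} y_a'$. I would split on how $a$ compares with $k$. If $a<k$, then $G$ preserves coordinates $1,\ldots,a$, so $G(y)$ and $G(y')$ still first differ, adjacently, at coordinate $a$, hence are $L$-adjacent. If $a=k$, continuity of $g$ gives $g(y_k)=g(y_k')$ or $g(y_k)\adj_{\kappa_k}g(y_k')$ while the coordinates past $k$ all equal their $x_i$, so the images are equal or first differ adjacently at coordinate $k$. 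If $a>k$, then $y_i=y_i'$ for all $i\le k$ and the collapsed tails coincide, so $G(y)=G(y')$. In all cases the images are equal or $L$-adjacent, establishing continuity.

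Next, applying the AFPP of the product to $G$ produces $y^*=(y_1^*,\ldots,y_v^*)$ with $G(y^*)$ equal or $L$-adjacent to $y^*$. Since $G(y^*)$ and $y^*$ agree in coordinates $1,\ldots,k-1$, I would invoke Definition~\ref{lexico} to force the relationship into coordinate $k$: either $g(y_k^*)=y_k^*$, or the first coordinate of disagreement is exactly $k$ and there $g(y_k^*)\adj_{\kappa_k} y_k^*$. The point is that the adjacency cannot be deferred to a coordinate beyond $k$, because the lexicographic definition requires every coordinate strictly preceding the adjacent one to be equal; if the first discrepancy already occurs at $k$, the adjacency must occur there. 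Either outcome makes $y_k^*$ an approximate fixed point of $g$, and since $g$ was arbitrary, $(X_k,\kappa_k)$ has the AFPP.

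The main obstacle I anticipate is the final extraction rather than the continuity check: it relies on the asymmetric, ``first-disagreement'' character of the lexicographic adjacency (Remark~\ref{gaps-allowed}) to guarantee that the discrepancy between $G(y^*)$ and $y^*$ cannot slip past coordinate $k$, which is precisely what turns an approximate fixed point of $G$ into one of $g$. The hypotheses $|X_k|>1$ and $X_k$ connected are what make the statement nontrivial and align it with the role of the first connected factor seen elsewhere in this section.
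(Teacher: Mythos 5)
Your proof is correct, and it differs from the paper's in one substantive respect. The paper lifts $g$ to a self-map $G$ of the product in the same way you do and reads off coordinate $k$ from an approximate fixed point of $G$, but its $G$ collapses every coordinate $i<k$ to a basepoint $x_i$, justified by the claim that $X_i=\{x_i\}$ for all $i<k$. That claim is not immediate from the hypotheses as stated --- a factor $X_j$ with $j<k$ could a priori be disconnected with $|X_j|>1$, exactly the scenario you flag --- and it becomes true only after an unstated auxiliary argument: a disconnected image never has the AFPP (a self-map permuting components has no approximate fixed point), so the product's AFPP forces it to be connected, and then Theorem~\ref{lex-conn} forces every factor preceding the first non-singleton one to be a singleton, which by minimality of $k$ means $X_i$ is a singleton for $i<k$. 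Your construction, keeping the identity on coordinates $1,\ldots,k-1$, bypasses that deduction entirely: your three-case continuity check and the first-disagreement extraction are valid verbatim whether or not the early factors are singletons. Indeed, your argument never uses the connectedness of $X_k$, the condition $|X_k|>1$, or the minimality of $k$, so it actually proves the stronger statement that if the lexicographic product has the AFPP then \emph{every} (nonempty) factor does; the paper's version buys a shorter continuity check at the cost of the implicit singleton argument, while yours buys robustness and generality. One small correction: your stated reason for avoiding $p_k$ is not quite the issue, since the paper uses $p_k$ only notationally to read a coordinate of a specific point, which requires no continuity --- but the underlying instinct is sound, as the continuous-projection route of Theorem~\ref{lex-factor-and-prod} is indeed unavailable under these hypotheses.
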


\begin{proof}
Let $X=\Pi_{i=1}^v X_i$.

Suppose the product $(X, L(\kappa_1,\ldots,\kappa_v))$ has the AFPP property.
Let $g: X_k \to X_k$ be $\kappa$-continuous. Let $x_i \in X_i$.
Notice this means $X_i=\{x_i\}$ for $i < k$.
Let $X = \Pi_{i=1}^v X_i$.
Let $G: X \to X$ be defined by
\[ G(y_1,\ldots,y_v)= \left \{ \begin{array}{ll}
(g(y_1),x_2,\ldots,x_v) & \mbox{if } k=1; \\
(x_1, \ldots, x_{k-1},g(y_k),x_{k+1},\ldots,x_v) & \mbox{if } 1 < k < v; \\
(x_1, \ldots, x_{v-1}, g(y_v)) &\mbox{if } k=v.
\end{array} \right .
\]
Since $g$ is $\kappa_k$-continuous,
our choice of $k$ implies
$G$ is $L(\kappa_1,\ldots,\kappa_v)$-continuous. By hypothesis,
there is a $p=(y_1',\ldots,y_v') \in X$
with $y_i' \in X_i$ such that
$G(p)=p$ or $G(p) \adj p$. Therefore,
either
\[ g(y_k)=p_k(G(p))=p_k(p)=y_k \mbox{ or }
g(y_k) \adj_{\kappa_k} y_k. \]
Thus, $y_k$ is an approximate fixed point for $g$.
\end{proof}

\section{Multivalued functions}
We study various product adjacencies with respect to properties of multivalued functions.

The following has an elementary proof.

\begin{prop}
\label{single-val-as-multi}
Let $f: (X,\kappa) \to (Y, \lambda)$ be a single-valued function between
digital images. Then the following are equivalent.
\begin{itemize}
\item $f$ is continuous.
\item As a multivalued function, $f$ has weak continuity.
\item As a multivalued function, $f$ has strong continuity. $\Box$
\end{itemize}
\end{prop}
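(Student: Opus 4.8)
The plan is to prove the three equivalences by establishing a cycle of implications, using the adjacency-based characterizations of continuity already available in the excerpt. Specifically, I would show that continuity implies strong continuity, that strong continuity implies weak continuity, and that weak continuity implies continuity. The key observation driving all three steps is that a single-valued function $f$ assigns to each point the singleton $\{f(x)\}$, so every point-image is automatically a connected (indeed, one-point) subset of $Y$. This simplifies the multivalued continuity notions considerably, because the various conditions about adjacency of sets collapse to conditions about adjacency of single points.

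First I would prove that continuity implies strong continuity. Suppose $f$ is continuous and let $x, x'$ be adjacent in $X$. By Theorem~\ref{cont-by-adj}, $f(x)$ and $f(x')$ are equal or $\lambda$-adjacent in $Y$. Regarded as a multivalued function, the point-images are the singletons $\{f(x)\}$ and $\{f(x')\}$, so the sole point of $\{f(x)\}$ is equal or adjacent to the sole point of $\{f(x')\}$ and vice versa; this is exactly the strong continuity condition of Definition~\ref{Tsaur-def}. Next, strong continuity implies weak continuity in general (for singleton images the two are in fact identical): if every point of $f(x)$ is equal or adjacent to some point of $f(x')$, then in particular the two singletons are adjacent as sets in the sense of the ``adjacent subsets'' definition, which is precisely weak continuity.

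Finally I would close the cycle by showing weak continuity implies continuity. Suppose $f$ has weak continuity and let $x, x'$ be adjacent in $X$. Then $\{f(x)\}$ and $\{f(x')\}$ are adjacent subsets of $Y$, meaning there exist points $a \in \{f(x)\}$ and $b \in \{f(x')\}$ that are equal or adjacent. Since these singletons contain only $f(x)$ and $f(x')$ respectively, we conclude $f(x)$ and $f(x')$ are equal or $\lambda$-adjacent, so $f$ is continuous by Theorem~\ref{cont-by-adj}. This completes the cycle $\text{continuous} \Rightarrow \text{strong} \Rightarrow \text{weak} \Rightarrow \text{continuous}$, establishing all three equivalences.

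I do not expect any serious obstacle here, since the statement is essentially a matter of unwinding definitions once the singleton point-images are recognized. The only point requiring minor care is making sure the ``adjacent subsets'' language of Definition~\ref{Tsaur-def} and the ``equal or adjacent points'' language of Theorem~\ref{cont-by-adj} are correctly reconciled for singletons; in particular one must note that two singletons being adjacent as sets is the same as their unique elements being equal or adjacent as points. This is exactly why the characterization theorems, rather than the original connectivity-preserving definitions, are the convenient tools to invoke.
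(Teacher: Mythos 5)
Your proof is correct; the paper itself gives no argument, stating only that the proposition ``has an elementary proof,'' and your cycle of implications (continuous $\Rightarrow$ strong $\Rightarrow$ weak $\Rightarrow$ continuous), reducing everything to Theorem~\ref{cont-by-adj} and the singleton point-images $\{f(x)\}$, is exactly the intended definition-unwinding. One pedantic note: your parenthetical that strong continuity implies weak continuity ``in general'' tacitly uses nonemptiness of point-images, but since a single-valued function always yields nonempty singletons, this is harmless here.
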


For multivalued functions
$F_i: X_i \multimap Y_i$, $1 \leq i \leq v$,
define the product multivalued function
\[ \Pi_{i=1}^v F_i: \Pi_{i=1}^v X_i \multimap \Pi_{i=1}^v Y_i \]
by
\[ (\Pi_{i=1}^v F_i)(x_1,\ldots, x_v) =
   \Pi_{i=1}^v F_i(x_i).
\]

\subsection{Weak continuity}
For $NP_v$, we have the following
results.
\begin{thm}
\label{weak-prod}
\rm{\cite{Boxer16a}}
Let $F_i: (X_i, \kappa_i) \multimap (Y_i,\lambda_i)$ be multivalued functions for $1 \le i \le v$.
Let $X = \Pi_{i=1}^v X_i$, $Y = \Pi_{i=1}^v Y_i$, and $F = \Pi_{i=1}^v F_i: (X, NP_v(\kappa_1,\ldots,\kappa_v)) \multimap (Y, NP_v(\lambda_1,\ldots,\lambda_v))$. Then
$F$ has weak continuity if and only if each 
$F_i$ has weak continuity. $\qed$
\end{thm}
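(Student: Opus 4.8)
The plan is to isolate the combinatorial heart of the statement as a lemma describing when two product-shaped subsets of $\Pi_{i=1}^v Y_i$ are adjacent under $NP_v(\lambda_1,\ldots,\lambda_v)$. Concretely, I would first prove: \emph{for nonempty subsets $A_i, B_i \subseteq Y_i$, the sets $\Pi_{i=1}^v A_i$ and $\Pi_{i=1}^v B_i$ are $NP_v(\lambda_1,\ldots,\lambda_v)$-adjacent subsets of $\Pi_{i=1}^v Y_i$ if and only if $A_i$ and $B_i$ are $\lambda_i$-adjacent subsets of $Y_i$ for every $i$.} Since $F(p) = \Pi_{i=1}^v F_i(x_i)$ for $p = (x_1,\ldots,x_v)$, every set-adjacency that weak continuity demands of $F$ is exactly a simultaneous product of set-adjacencies of the factors, and the theorem drops out of this lemma.

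For the lemma, recall that, with $u=v$, two points of $\Pi_{i=1}^v Y_i$ are $NP_v$-adjacent precisely when they are distinct and each coordinate pair is either equal or $\lambda_i$-adjacent (Definition~\ref{NP_u-def}); combined with the convention that two sets are adjacent when they share a point or contain an adjacent pair, this means $\Pi A_i$ and $\Pi B_i$ are adjacent iff there are choices $a_i \in A_i$, $b_i \in B_i$ with $a_i = b_i$ or $a_i \adj_{\lambda_i} b_i$ for every $i$. The forward implication then reads off the $i$th coordinate of a single witnessing pair, while the reverse implication assembles independent per-coordinate witnesses into one pair of points. Given the lemma, the ``if'' direction of the theorem follows by taking $NP_v$-adjacent $p=(x_1,\ldots,x_v)$, $q=(x_1',\ldots,x_v')$, noting that for each $i$ either $x_i = x_i'$ (so $F_i(x_i)=F_i(x_i')$ are adjacent as equal nonempty sets) or $x_i \adj_{\kappa_i} x_i'$ (so $F_i(x_i), F_i(x_i')$ are adjacent by weak continuity of $F_i$), and applying the lemma. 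The ``only if'' direction fixes, for a given index $i$ and a $\kappa_i$-adjacent pair $a,b \in X_i$, arbitrary basepoints in the other factors and forms the two points $p,q$ of $X$ that agree everywhere except in coordinate $i$, where they carry $a$ and $b$; these are $NP_v$-adjacent (one strictly adjacent coordinate, the rest equal), so weak continuity of $F$ makes $F(p), F(q)$ adjacent, and the forward direction of the lemma hands back the $\lambda_i$-adjacency of $F_i(a)$ and $F_i(b)$.

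The main obstacle is the lemma, and within it the bookkeeping forced by the fact that set-adjacency demands only a \emph{single} witnessing pair of points rather than coordinatewise witnesses for a fixed pair; the point is that because the $NP_v$-adjacency of two points is a conjunction of independent per-coordinate conditions, each reading ``equal or $\lambda_i$-adjacent,'' a witnessing pair can be built one coordinate at a time, and conversely its coordinates witness the factor adjacencies. A secondary subtlety is the handling of coordinates on which $p$ and $q$ agree: there the relevant factor sets coincide and are ``adjacent'' only because point-images are nonempty, so I would either adopt the standing assumption that $F_i(x) \neq \emptyset$ or observe that in the ``only if'' direction the existence of a witnessing pair for $F(p), F(q)$ already forces the needed nonemptiness.
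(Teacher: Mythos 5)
Your proposal is correct. Note first that the paper itself states Theorem~\ref{weak-prod} with a citation to~\cite{Boxer16a} and no proof, so there is no in-paper argument to match against; the closest comparison is with the paper's own proofs of the analogous results for the tensor and Cartesian product adjacencies (Theorems~\ref{weak-prod-cont-implies-factor} and~\ref{weak-Cart-prod-implies-factor}), which argue directly with points: given adjacent $p,p'$, they extract or assemble witnessing pairs $y \in F(p)$, $y' \in F(p')$ inline, coordinate by coordinate. Your route differs by first isolating the clean combinatorial lemma that $\Pi_{i=1}^v A_i$ and $\Pi_{i=1}^v B_i$ are $NP_v$-adjacent subsets if and only if each $A_i,B_i$ is a $\lambda_i$-adjacent pair of subsets, and this lemma is right: since $NP_v$-adjacency of points is exactly ``distinct, and each coordinate pair equal or adjacent,'' a single witnessing pair projects to per-coordinate witnesses, and conversely independent per-coordinate witnesses assemble into one pair (which is either a common point or an $NP_v$-adjacent pair). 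Both directions of the theorem then follow exactly as you say, with the ``only if'' direction correctly using a pair differing in a single coordinate, which is $NP_v$-adjacent since $1 \le 1 \le v$. What the lemma buys you is reusability and a transparent separation of the set-level bookkeeping from the definition of weak continuity; what the direct style buys is brevity and uniformity with the other product adjacencies, where no such clean product-of-adjacencies characterization holds (for $\times_{i=1}^v \lambda_i$ the images of adjacent points are products agreeing in all but one factor, so the inline argument is the natural one there). You also handle the one genuine subtlety correctly: coordinates with $x_i = x_i'$ contribute adjacency of $F_i(x_i)$ and $F_i(x_i')$ only via nonempty intersection, so the standing convention that point-images are nonempty (implicit throughout the paper, e.g.\ in Theorem~\ref{mildadj}) is needed, and your two suggested fixes are both adequate.
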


For the tensor product, we have the following.

\begin{thm}
\label{weak-prod-cont-implies-factor}
For each index $i$ such that $1 \le i \le v$,
let $f_i: (X_i,\kappa_i) \multimap (Y_i,\lambda_i)$ be a
multivalued map between digital images.
Let $X=\Pi_{i=1}^v X_i$, $Y=\Pi_{i=1}^v Y_i$.
If the product multivalued map
\[ f=\Pi_{i=1}^v f_i: (X, T(\kappa_1,\ldots,\kappa_v)) \multimap (Y, T(\lambda_1, \ldots, \lambda_v)) \]
has weak continuity, then for each~$i$,
$f_i$ has weak continuity.
\end{thm}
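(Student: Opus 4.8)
The plan is to adapt the proof of Theorem~\ref{prod-cont-implies-factor} from the single-valued to the weakly continuous multivalued setting. Fix an index~$i$ and let $x_i, x_i' \in X_i$ be $\kappa_i$-adjacent; the goal is to show that $f_i(x_i)$ and $f_i(x_i')$ are adjacent subsets of $Y_i$. The strategy is to promote this one-coordinate adjacency to a $T(\kappa_1,\ldots,\kappa_v)$-adjacency in the full product, invoke the weak continuity of $f$ there, and then read the conclusion off on the $i$-th coordinate.

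Concretely, for each $j \neq i$ I would choose a $\kappa_j$-adjacent pair $x_j, x_j' \in X_j$ and set $p=(x_1,\ldots,x_v)$ and $p'=(x_1',\ldots,x_v')$. Since every coordinate pair is now adjacent, $p \adj_{T(\kappa_1,\ldots,\kappa_v)} p'$ by Definition~\ref{tensor-def}, so weak continuity of $f$ gives that $f(p)$ and $f(p')$ are adjacent subsets of $Y$. Unpacking this using the product structure $f(p)=\Pi_{j=1}^v f_j(x_j)$ and $f(p')=\Pi_{j=1}^v f_j(x_j')$ produces points $q=(q_1,\ldots,q_v)\in f(p)$ and $q'=(q_1',\ldots,q_v')\in f(p')$ that are equal or $T(\lambda_1,\ldots,\lambda_v)$-adjacent, with $q_j \in f_j(x_j)$ and $q_j' \in f_j(x_j')$ for every~$j$. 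If $q=q'$ then $q_i=q_i'$ lies in both $f_i(x_i)$ and $f_i(x_i')$; if $q$ and $q'$ are $T(\lambda_1,\ldots,\lambda_v)$-adjacent then in particular $q_i \adj_{\lambda_i} q_i'$. In either case $q_i \in f_i(x_i)$ and $q_i' \in f_i(x_i')$ are equal or $\lambda_i$-adjacent, so $f_i(x_i)$ and $f_i(x_i')$ are adjacent in $Y_i$, as required.

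The step I expect to be the crux is the coordinate extraction in the second paragraph: in the single-valued case $f(p)$ is a single point and one simply applies the projection of Theorem~\ref{T-projection-cont}, but here $f(p)$ is a product of sets, so one must produce the witnessing pair $q,q'$ and use that any point of $\Pi_{j=1}^v f_j(x_j)$ has its $i$-th coordinate in $f_i(x_i)$, together with the fact that $T$-adjacency forces $\lambda_i$-adjacency in every coordinate. A secondary delicate point is the very first move: forming the $T$-adjacent pair $p,p'$ requires a $\kappa_j$-adjacent pair in each factor $X_j$, which, as Proposition~\ref{no-1zs} records, is precisely what a $T$-adjacency in the product demands; the argument is therefore carried out under the (implicit) assumption that every factor carries an adjacent pair, matching the setup in the proof of Theorem~\ref{prod-cont-implies-factor}.
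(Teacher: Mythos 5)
Your proposal is correct and is essentially the paper's own proof: pad the fixed $\kappa_i$-adjacent pair with chosen adjacent pairs in every other factor to obtain $T(\kappa_1,\ldots,\kappa_v)$-adjacent points $p,p'$, apply weak continuity of $f$ to get witnesses $q \in f(p)$, $q' \in f(p')$ that are equal or $T(\lambda_1,\ldots,\lambda_v)$-adjacent, and read off the $i$-th coordinates. You have also correctly flagged the implicit hypothesis that every factor contains an adjacent pair, which the paper's proof assumes silently in its opening line ``for all indices $i$, let $x_i \adj_{\kappa_i} x_i'$'' and without which the hypothesis on $f$ is vacuous while the conclusion can fail.
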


\begin{proof}
For all indices~$i$, let $x_i \adj_{\kappa_i} x_i'$ in
$X_i$. Then, in $X$, we have
$p=(x_1,\ldots,x_v) \adj_{T(\kappa_1,\ldots,\kappa_v)} p'=(x_1',\ldots,x_v')$. The weak
continuity of~$f$ implies
$f(p)$ and $f(p')$ are adjacent
subsets of $(Y,T(\lambda_1,\ldots,\lambda_v))$. Therefore,
there exist $y \in f(p)$ and
$y' \in f(p')$ such that
$y=y'$ or $y \adj_{T(\lambda_1,\ldots,\lambda_v)} y'$.

Now, $y=(y_1,\ldots,y_v)$ where
$y_i \in f_i(x_i)$, and $y'=(y_1',\ldots,y_v')$ where
$y_i' \in f_i(x_i')$. If $y=y'$
then we have $y_i=y_i'$ for all
indices~$i$. If $y \adj_{T(\lambda_1,\ldots,\lambda_v)} y'$ then we have
$y_i \adj_{\lambda_i} y_i'$ for all indices~$i$. In either case,
we have for all~$i$ that $f_i(x_i)$ and $f_i(x_i')$ are adjacent subsets
of $Y_i$. It follows that each $f_i$ has weak continuity.
\end{proof}

The converse of Theorem~\ref{weak-prod-cont-implies-factor} is not generally true, as shown by the following.

\begin{exl}
\label{tensor-non-weak-prod}
Let $f$ and $g$ be the single-valued functions of
Example~\ref{need-local-1-1}. By Proposition~\ref{single-val-as-multi},
$f$ and $g$ have weak continuity. However, Example~\ref{need-local-1-1} shows 
that $f \times g$ is not $(T(c_1,c_1),T(c_1,c_1))$-continuous, so by
Proposition~\ref{single-val-as-multi}, $f \times g$ does not have
$(T(c_1,c_1),T(c_1,c_1))$-weak continuity. $\Box$
\end{exl}

For the Cartesian product adjacency, we have the following.

\begin{thm}
\label{weak-Cart-prod-implies-factor}
Let $f_i: (X_i,\kappa_i) \multimap (Y_i,\lambda_i)$ be
multivalued maps between digital
images, $1 \le i \le v$.
Let $X=\Pi_{i=1}^v X_i$, $Y=\Pi_{i=1}^v Y_i$. Then the product multivalued map
\[ f=\Pi_{i=1}^v f_i: (X, \times_{i=1}^v \kappa_i) \multimap (Y, \times_{i=1}^v \lambda_i) \]
has weak continuity if and only if for each~$i$,
$f_i$ has weak continuity.
\end{thm}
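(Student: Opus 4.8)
The plan is to prove the biconditional in Theorem~\ref{weak-Cart-prod-implies-factor} by handling the two directions separately, exploiting the defining feature of the Cartesian product adjacency: two distinct points are $\times_{i=1}^v \kappa_i$-adjacent precisely when they differ in exactly one coordinate, with the differing coordinates being $\kappa_i$-adjacent. This single-coordinate structure is what makes the converse succeed here, in contrast to the tensor product case (Example~\ref{tensor-non-weak-prod}), where adjacency forces \emph{all} coordinates to change simultaneously.

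For the forward direction (product weak continuity implies factor weak continuity), I would proceed exactly as in the proof of Theorem~\ref{weak-prod-cont-implies-factor}, but adapted to the single-coordinate adjacency. Fix an index $i$ and suppose $x_i \adj_{\kappa_i} x_i'$ in $X_i$. For each $j \neq i$, pick any fixed $x_j \in X_j$, and form the two points $p=(x_1,\ldots,x_i,\ldots,x_v)$ and $p'=(x_1,\ldots,x_i',\ldots,x_v)$ differing only in coordinate $i$. These are $\times_{i=1}^v \kappa_i$-adjacent, so weak continuity of $f$ gives adjacent subsets $f(p)$ and $f(p')$; hence there are $y \in f(p)$, $y' \in f(p')$ that are equal or $\times_{i=1}^v \lambda_i$-adjacent in $Y$. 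Projecting to the $i$-th coordinate and using that $y_i \in f_i(x_i)$ and $y_i' \in f_i(x_i')$, I conclude $f_i(x_i)$ and $f_i(x_i')$ are adjacent subsets of $Y_i$, giving weak continuity of $f_i$.

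For the converse direction (factor weak continuity implies product weak continuity), I would take $\times_{i=1}^v \kappa_i$-adjacent points $p,p'$ in $X$. By Definition~\ref{product-adj-def}, they agree in every coordinate except some single index $k$, where $x_k \adj_{\kappa_k} x_k'$. Weak continuity of $f_k$ supplies $y_k \in f_k(x_k)$ and $y_k' \in f_k(x_k')$ with $y_k$ and $y_k'$ equal or $\lambda_k$-adjacent. For the remaining coordinates $j \neq k$, since $x_j = x_j'$, I pick any common point $y_j \in f_j(x_j) = f_j(x_j')$ and use it in both tuples. Assembling $y=(y_1,\ldots,y_v)$ and $y'=(y_1',\ldots,y_v')$ (with $y_j'=y_j$ for $j \neq k$) yields points that agree in every coordinate except possibly $k$; thus $y$ and $y'$ are equal or $\times_{i=1}^v \lambda_i$-adjacent, showing $f(p)$ and $f(p')$ are adjacent. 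Hence $f$ has weak continuity.

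I do not anticipate a genuine obstacle here; both directions are routine consequences of the one-coordinate-at-a-time nature of $\times_{i=1}^v \kappa_i$. The one point requiring mild care is the converse: I must verify that $f_j(x_j)$ is nonempty so that a common point $y_j$ can actually be chosen for the untouched coordinates. This is immediate for multivalued functions as defined (each $f(x)$ is a subset of $Y$, and the product adjacency argument only needs one representative), but it is worth stating explicitly, since it is exactly the step that silently fails for the tensor product where all coordinates are simultaneously constrained.
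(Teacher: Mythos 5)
Your proposal is correct and matches the paper's own proof in both directions: the forward implication embeds a $\kappa_i$-adjacent pair into points of $X$ differing in a single coordinate and projects the resulting adjacent values, and the converse builds adjacent (or equal) representatives in $f(p)$ and $f(p')$ by applying weak continuity of $f_k$ at the unique differing index $k$ and choosing a common point $y_j \in f_j(x_j) = f_j(x_j')$ elsewhere. Your explicit remark about nonemptiness of $f_j(x_j)$ is a fair point of care that the paper leaves implicit, but it introduces no divergence from the paper's argument.
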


\begin{proof}
Suppose $f$ has weak continuity. Let $x_i \adj_{\kappa_i} x_i'$ in $X_i$.
Let
\[ x=(x_1, \ldots, x_v) \in X,\]
\[x'=(x_1, \ldots, x_{j-1}, x_j', x_{j+1}, \ldots, x_v) \in X \mbox{ for some index } j.\]
We have $x \adj_{\times_{i=1}^v \kappa_i} x'$.
Therefore, there exist
\[y=(y_1,\ldots,y_v) \in f(x) = \Pi_{i=1}^v f_i(x_i),\] 
\[y'=(y_1', \ldots, y_v') \in f(x')=  \Pi_{i=1}^{j-1} f_i(x_i) \times f_j(x_j) \times \Pi_{i=j+1}^v f_i(x_i)\]
such that
$y \adj_{\times_{i=1}^v \lambda_i} y'$. Therefore, we have $y_j \in f_j(x_j)$,
$y_j' \in f_j(x_j')$, and $y_j=y_j'$ or $y_j \adj_{\lambda_j} y_j'$. Thus,
$f_j$ has weak continuity.

Suppose each $f_i$ has weak
continuity. Let
$p \adj_{\times_{i=1}^v \kappa_i} p'$ in $X$, where
$p = (x_1,\ldots,x_v)$,
$p' = (x_1',\ldots,x_v')$,
$x_i, x_i' \in X_i$, and, from
the definition of the $\times_{i=1}^v \kappa_i$ adjacency, there is one index
$j$ such that $x_j \adj_{\kappa_j} x_j'$ and for all indices $i \neq j$, $x_i=x_i'$ and therefore $f_i(x_i)=f_i(x_i')$. Since $f_j$
has weak continuity, there
exist $y_j \in f_j(x_j)$ and
$y_j' \in f_j(x_j')$ such that
$y_j=y_j'$ or $y_j \adj_{\lambda_j} y_j'$. For $i \neq j$ we can take $y_i \in f_i(x_i)$. Then
$y = (y_1,\ldots,y_v)$ and
$y'=(y_1,\ldots,y_{j-1},y_j',y_{j+1},\ldots,y_v)$ are
equal or $\times_{i=1}^v \lambda_i$-adjacent, and we have
$y \in f(p)$, $y' \in f(p')$.
Therefore, $f$ has weak continuity.
\end{proof}

For the lexicographic adjacency,
Example~\ref{lex-weakStrong-exl} below shows there is
no general product property for weak continuity, and
Example~\ref{lex-weakStrong-factor-exl} below shows there is not a general factor property
for weak continuity.

\subsection{Strong continuity}
\begin{thm}
\label{strong-prod}
\rm{\cite{Boxer16a}}
Let $F_i: (X_i, \kappa_i) \multimap (Y_i,\lambda_i)$ be multivalued functions for $1 \le i \le v$.
Let $X = \Pi_{i=1}^v X_i$, $Y = \Pi_{i=1}^v Y_i$, and $F = \Pi_{i=1}^v F_i: (X, NP_v(\kappa_1,\ldots,\kappa_v)) \multimap (Y, NP_v(\lambda_1,\ldots,\lambda_v))$. Then
$F$ has strong continuity if and only if each 
$F_i$ has strong continuity. $\qed$
\end{thm}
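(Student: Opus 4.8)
The plan is to exploit the fact that equal-or-$NP_v$-adjacency factors coordinate-wise: by Definition~\ref{NP_u-def} (with $u=v$), two points $p=(x_1,\dots,x_v)$ and $q=(x_1',\dots,x_v')$ of $\Pi_{i=1}^v X_i$ are equal or $NP_v(\kappa_1,\dots,\kappa_v)$-adjacent if and only if for every index $i$ the coordinates $x_i$ and $x_i'$ are equal or $\kappa_i$-adjacent. Since the product multivalued function satisfies $F(p)=\Pi_{i=1}^v F_i(x_i)$, a point of $F(p)$ is a tuple $y=(y_1,\dots,y_v)$ with $y_i\in F_i(x_i)$, and this coordinate-wise characterization of adjacency matches exactly the coordinate-wise structure of the strong-continuity condition. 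Throughout I assume, as is standard in this setting, that each point-image is nonempty. The argument then runs parallel to the weak-continuity analog, Theorem~\ref{weak-prod}, and to the $\times$-adjacency proof of Theorem~\ref{weak-Cart-prod-implies-factor}, but must now verify the full strong-continuity requirement in both directions.

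For the reverse implication, suppose each $F_i$ has strong continuity and let $p$ and $q$ be $NP_v$-adjacent, so that for every $i$ the coordinates $x_i$ and $x_i'$ are equal or $\kappa_i$-adjacent. Given any $y=(y_1,\dots,y_v)\in F(p)$, I would build a matching point $y'=(y_1',\dots,y_v')\in F(q)$ one coordinate at a time: whenever $x_i=x_i'$, set $y_i'=y_i$, which is legitimate since then $F_i(x_i)=F_i(x_i')$; whenever $x_i\adj_{\kappa_i}x_i'$, invoke the strong continuity of $F_i$ to choose $y_i'\in F_i(x_i')$ equal or $\lambda_i$-adjacent to $y_i$. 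By the coordinate-wise characterization, $y$ is then equal or $NP_v$-adjacent to $y'\in F(q)$. This shows every point of $F(p)$ is matched in $F(q)$; matching every point of $F(q)$ in $F(p)$ follows by the symmetric construction, so $F$ has strong continuity.

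For the forward implication, suppose $F$ has strong continuity, fix an index $i$, and take $x_i\adj_{\kappa_i}x_i'$ in $X_i$. Choosing a fixed $x_j\in X_j$ for each $j\neq i$, I form $p=(x_1,\dots,x_i,\dots,x_v)$ and $q=(x_1,\dots,x_i',\dots,x_v)$, which differ only in coordinate $i$ and are therefore $NP_v$-adjacent. Given $y_i\in F_i(x_i)$, I extend it to $y\in F(p)$ by picking $y_j\in F_j(x_j)$ for each $j\neq i$ (possible by nonemptiness); strong continuity of $F$ then yields $y'\in F(q)$ equal or $NP_v$-adjacent to $y$, and reading off coordinate $i$ gives $y_i'\in F_i(x_i')$ equal or $\lambda_i$-adjacent to $y_i$. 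The reverse matching for $F_i$ comes from the other half of the strong-continuity condition for $F$, so each $F_i$ has strong continuity.

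The argument is essentially routine once the coordinate-wise factorization of $NP_v$-adjacency is isolated; the only genuine subtlety is the forward direction's reliance on nonempty point-images to pad a single-coordinate datum out into a point of the product, which is why I flag the nonemptiness hypothesis explicitly. The main thing to get right is bookkeeping the disjunction ``equal or adjacent'' uniformly across all coordinates, so that a mixture of equal coordinates and strictly adjacent coordinates still yields a globally equal-or-$NP_v$-adjacent pair rather than forcing a single uniform case.
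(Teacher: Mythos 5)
Your proof is correct: the coordinate-wise characterization of equal-or-$NP_v$-adjacency (every coordinate pair equal or adjacent) is exactly the right pivot, and both directions --- including the padding of a single-coordinate datum into a point of the product via nonempty point-images, which you rightly flag --- are handled soundly. The paper itself gives no proof of this theorem, quoting it from \cite{Boxer16a} with a $\qed$, but your argument is the standard one and matches in structure the paper's own proofs of the analogous results for the Cartesian product adjacency (Theorem~\ref{strong-Cart-prod-implies-factor}) and the tensor product adjacency, so there is nothing to correct.
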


For the tensor product adjacency, we have the following.

\begin{thm}
\label{strong-tensor-cont-implies-factor}
Let $f_i: (X_i,\kappa_i) \multimap (Y_i,\lambda_i)$ be
multivalued maps between digital
images, $1 \le i \le v$.
Let $X=\Pi_{i=1}^v X_i$, $Y=\Pi_{i=1}^v Y_i$.
If the product multivalued map
\[ f=\Pi_{i=1}^v f_i: (X, T(\kappa_1,\ldots,\kappa_v)) \multimap (Y, T(\lambda_1, \ldots, \lambda_v)) \]
has strong continuity, then for each~$i$,
$f_i$ has strong continuity.
\end{thm}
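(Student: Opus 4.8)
The plan is to follow the proof of Theorem~\ref{weak-prod-cont-implies-factor} almost verbatim in its opening moves, and then to replace its ``adjacent subsets'' step by a pointwise argument that strong continuity makes available. Fix an index~$i$; to verify that $f_i$ has strong continuity I take an arbitrary $\kappa_i$-adjacent pair $x_i, x_i'$ in $X_i$ (if no such pair exists, $f_i$ is strongly continuous vacuously). For every other index $j \neq i$ I choose a $\kappa_j$-adjacent pair $x_j, x_j'$ in $X_j$; as in the weak-continuity theorem this is possible in the intended setting, where each factor carries an adjacent pair. Setting $p=(x_1,\ldots,x_v)$ and $p'=(x_1',\ldots,x_v')$, Definition~\ref{tensor-def} gives that $p$ and $p'$ are $T(\kappa_1,\ldots,\kappa_v)$-adjacent in $X$.

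Next I would push strong continuity down to the $i$-th factor one image point at a time. Given any $y_i \in f_i(x_i)$, I pick $y_j \in f_j(x_j)$ for each $j \neq i$ and assemble $y=(y_1,\ldots,y_v)$, a point of $f(p)=\Pi_{j=1}^v f_j(x_j)$. Strong continuity of $f$, applied to the $T$-adjacent pair $p,p'$, then yields a point $y'=(y_1',\ldots,y_v') \in f(p')=\Pi_{j=1}^v f_j(x_j')$ such that $y$ and $y'$ are equal or $T(\lambda_1,\ldots,\lambda_v)$-adjacent. Here the tensor adjacency does the real work: since $T$-adjacency and equality are both imposed coordinate by coordinate, the $i$-th coordinates satisfy $y_i = y_i'$ in the first case and $y_i \adj_{\lambda_i} y_i'$ in the second, while $y_i' \in f_i(x_i')$ in either case. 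Thus every point of $f_i(x_i)$ is equal or $\lambda_i$-adjacent to a point of $f_i(x_i')$. Running the same construction with the roles of $p$ and $p'$ reversed, and invoking the second clause of strong continuity of $f$, shows that every point of $f_i(x_i')$ is equal or $\lambda_i$-adjacent to a point of $f_i(x_i)$. As $x_i,x_i'$ were arbitrary, $f_i$ has strong continuity.

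The main thing to watch is the manufacture of the auxiliary points, which is also the only place where hypotheses beyond strong continuity of $f$ enter. Forming the $T$-adjacent pair $p,p'$ presupposes a genuine adjacent pair in each companion factor $X_j$, and forming $y \in f(p)$ presupposes that each $f_j(x_j)$ is nonempty; both hold in the non-degenerate situation and are dispatched exactly as in Theorem~\ref{weak-prod-cont-implies-factor}. The conceptual novelty relative to the weak-continuity case is that one must start from a prescribed point $y_i$ rather than merely locate some adjacent pair of image points, so the argument is only as good as the partner $y'$ that strong continuity returns; for the tensor adjacency this is no obstacle at all, precisely because $T$-adjacency constrains every coordinate at once and hence hands back exactly the $i$-th-coordinate relation that is needed.
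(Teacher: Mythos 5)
Your proposal is correct and follows essentially the same route as the paper's proof: form a $T$-adjacent pair $p,p'$ from coordinatewise adjacent pairs, apply strong continuity of $f$ in both directions, and read off the $i$-th coordinate relation, which tensor adjacency supplies in every coordinate. You are in fact slightly more careful than the paper, since you make explicit both the step of extending a prescribed $y_i \in f_i(x_i)$ to a full tuple in $f(p)$ (which the paper leaves implicit in its ``for every $q \in f(p)$'' phrasing) and the tacit assumption that each companion factor $X_j$ carries an adjacent pair.
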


\begin{proof}
Let $x_i \adj_{\kappa_i} x_i'$ in
$X_i$. Let
$p = (x_1,\ldots,x_v)$ and 
$p' = (x_1',\ldots,x_v')$.
Note $p \adj_{T(\kappa_1,\ldots,\kappa_v)} p'$ in $X$.
Since $f$ has strong continuity,
for every $q=(y_1,\ldots,y_v)\in f(p)=
\Pi_{i=1}^v f_i(x_i)$ where $y_i \in f_i(x_i)$, there
exists $q'=(y_1',\ldots,y_v')\in f(p')=
\Pi_{i=1}^v f_i(x_i')$ where $y_i' \in f_i(x_i')$
such that either $q=q'$ or
$q \adj_{T(\lambda_1,\ldots,\lambda_v)} q'$; and therefore
$y_i=y_i'$ for all $i$ or
$y_i \adj_{\lambda_i} y_i'$ for
all $i$. Also, for every $r'=(r_1',\ldots,r_v')\in f(p')$ where $r_i' \in f_i(x_i')$, there
exists $r=(r_1,\ldots,r_v)\in f(p)$ where $r_i \in f_i(x_i)$
such that either $r=r'$ or
$r \adj_{T(\lambda_1,\ldots,\lambda_v)} r'$; and therefore
$r_i=r_i'$ for all $i$ or
$r_i \adj_{\lambda_i} r_i'$ for
all $i$. Thus $f_i$ has 
$(\kappa_i,\lambda_i)$-strong continuity.
\end{proof}

The converse of Theorem~\ref{strong-tensor-cont-implies-factor} is not generally true, as shown by the following.

\begin{exl}
\label{tensor-non-strong-prod}
Let $f_1: ([0,1]_{\Z},c_1) \multimap ([0,1]_{\Z},c_1)$ be
defined by $f_1(x)=\{0\}$.
Let $f_2: ([0,1]_{\Z},c_1) \multimap ([0,1]_{\Z},c_1)$ be
defined by $f_2(x)=\{x\}$. Then
$f_1$ and $f_2$ both have strong
continuity. However, $f_1 \times f_2$ does not have
$(T(c_1,c_1),T(c_1,c_1))$-strong continuity.
\end{exl}

\begin{proof}
It is easily seen that $f_1$ and $f_2$ both have strong
continuity. However, in
Example~\ref{tensor-non-weak-prod}, we showed that
$f_1 \times f_2$ does not have
$(T(c_1,c_1),T(c_1,c_1))$-weak continuity. Therefore, $f_1 \times f_2$ does not have
$(T(c_1,c_1),T(c_1,c_1))$-strong continuity.
\end{proof}

\begin{thm}
\label{strong-Cart-prod-implies-factor}
Let $f_i: (X_i,\kappa_i) \multimap (Y_i,\lambda_i)$ be
multivalued maps between digital
images, $1 \le i \le v$.
Let $X=\Pi_{i=1}^v X_i$, $Y=\Pi_{i=1}^v Y_i$.
Then the product multivalued map
\[ f=\Pi_{i=1}^v f_i: (X, \times_{i=1}^v \kappa_i) \multimap (Y, \times_{i=1}^v \lambda_i) \]
has strong continuity if and
only if for each~$i$,
$f_i$ has strong continuity.
\end{thm}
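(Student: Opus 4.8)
The plan is to prove both implications directly from the definition of strong continuity (Definition~\ref{Tsaur-def}) and the structure of the Cartesian product adjacency (Definition~\ref{product-adj-def}), following the template of the proof of Theorem~\ref{weak-Cart-prod-implies-factor} but carrying the two universal quantifiers that strong continuity demands. As elsewhere in this section, I would assume each $f_i(x_i)$ is nonempty, so that the product images $f(p)=\Pi_{i=1}^v f_i(x_i)$ are nonempty and witness points can be chosen coordinatewise.

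For the forward direction, I would suppose $f$ has strong continuity, fix an index $j$ and $\kappa_j$-adjacent $x_j,x_j'\in X_j$, and choose a fixed $x_i\in X_i$ for each $i\neq j$. Forming $p=(x_1,\dots,x_v)$ and the point $p'$ that agrees with $p$ except for carrying $x_j'$ in coordinate $j$ gives $p\adj_{\times_{i=1}^v \kappa_i}p'$. Given any $y_j\in f_j(x_j)$, I would extend it to $q=(y_1,\dots,y_v)\in f(p)$ by selecting $y_i\in f_i(x_i)$ for $i\neq j$; strong continuity of $f$ then supplies $q'=(y_1',\dots,y_v')\in f(p')$ with $q=q'$ or $q\adj_{\times_{i=1}^v \lambda_i}q'$. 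Since $f(p')=\Pi_{i\neq j}f_i(x_i)\times f_j(x_j')$, its $j$-th coordinate $y_j'$ lies in $f_j(x_j')$, and in every case $y_j$ is equal or $\lambda_j$-adjacent to $y_j'$. The other clause of strong continuity (starting from a point of $f(p')$) yields the symmetric requirement, so $f_j$ has strong continuity.

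For the converse, I would suppose each $f_i$ has strong continuity and take $p\adj_{\times_{i=1}^v \kappa_i}p'$ with $p=(x_1,\dots,x_v)$, $p'=(x_1',\dots,x_v')$. By Definition~\ref{product-adj-def} there is a unique index $j$ with $x_j\adj_{\kappa_j}x_j'$ and $x_i=x_i'$, hence $f_i(x_i)=f_i(x_i')$, for $i\neq j$. Given $q=(y_1,\dots,y_v)\in f(p)$, strong continuity of $f_j$ provides $y_j'\in f_j(x_j')$ equal or $\lambda_j$-adjacent to $y_j$; replacing only the $j$-th coordinate produces a point of $f(p')$ equal or $\times_{i=1}^v \lambda_i$-adjacent to $q$, and the symmetric statement follows identically. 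The only point demanding care is the forward direction, where the single differing coordinate of the product-adjacent witnesses $q,q'$ need not be the index $j$ under consideration; I expect this to be the main (minor) obstacle, and it is dispatched by the observation that a one-coordinate change either affects coordinate~$j$, giving $\lambda_j$-adjacency there, or leaves coordinate~$j$ fixed, giving equality there.
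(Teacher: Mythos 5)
Your proposal is correct and follows essentially the same route as the paper's proof: both directions are argued directly from the definitions, with the forward direction embedding an adjacent pair $x_j,x_j'$ into product points differing only at coordinate $j$, and the converse replacing only the $j$-th coordinate of a given witness point. The one place you flag as delicate --- that the product-adjacent witnesses $q,q'$ may differ at a coordinate other than $j$, in which case coordinate $j$ gives equality rather than adjacency --- is handled correctly and is in fact treated more explicitly than in the paper, which passes over it (and over the nonemptiness of the $f_i(x_i)$ needed to extend an arbitrary $y_j\in f_j(x_j)$ to a point of $f(p)$) without comment.
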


\begin{proof}
Suppose $f$ has strong continuity.
Let $x_i \adj_{\kappa_i} x_i'$ in
$X_i$. Then
\[ p=(x_1,\ldots,x_v) \adj_{\times_{i=1}^v \kappa_i}
(x_1,\ldots,x_{j-1},x_j',x_{j+1},\ldots,x_v)=p'
\]
in $X$, for some index~$j$. Since $f$ has strong
continuity, we must have that for
every $q=(q_1,\ldots,q_v)\in f(p)$ there exists $q'=(q_1',\ldots,q_v') \in f(p')$ such that
$q = q'$ or $q \adj_{\times_{i=1}^v \lambda_i} q'$, so $q_i=q_i'$ or $q_i \adj_{\lambda_i} q_i'$; and for every
$r'=(r_1',\ldots,r_v') \in f(p')$ there exists
$r=(r_1,\ldots,r_v) \in f(p)$
such that $r=r'$ or
$r \adj_{\times_{i=1}^v \lambda_i} r'$, so $r_i=r_i'$ or $r_i \adj_{\lambda_i} r_i'$.
Therefore, $f_i$ has strong continuity.

Suppose for each~$i$, $f_i$ has
strong continuity. Let
$p=(x_1,\ldots, x_v)$ and 
$p'=(x_1',\ldots, x_v')$ with
$x_i,x_i' \in X_i$ be such that
$p \adj_{\times_{i=1}^v \kappa_i} p'$. Then for some
index $j$, $x_j \adj_{\kappa_j} x_j'$ and for all indices $i \neq j$, $x_i = x_i'$. Therefore,
$i \neq j$ implies there exists $q_i \in f_i(x_i)=f_i(x_i')$; and since
$f_j$ has strong continuity, for
every $q_j \in f_j(x_j)$ there
exists $q_j' \in f_j(x_j')$ such that $q_j=q_j'$ or $q_j \adj_{\lambda_j} q_j'$.
Let $q'=(q_1,\ldots,q_{j-1},q_j',q_{j+1},\ldots,q_v)$. Then
$q =(q_1, \ldots, q_v) =
q'$ or $q \adj_{\times_{i=1}^v \lambda_i} q'$ with $q \in f(p)$, $q' \in f(p')$. Similarly, for every $r' \in f(p')$ there exists
$r \in f(p)$ such that $r=r'$
or $r \adj_{\times_{i=1}^v \lambda_i} r'$.
Thus, $f$ has strong continuity.
\end{proof}

For the lexicographic adjacency,
the following shows there is not a
general product property for
weak or strong continuity.

\begin{exl}
\label{lex-weakStrong-exl}
Let $f_1: ([0,1]_{\Z},c_1) \multimap ([0,1]_{\Z},c_1)$ be
the multivalued function $f_1(x)=\{0\}$. Let $f_2: (\{0,2\},c_1) \multimap (\{0,2\},c_1)$ be the function
$f_2(x)=\{x\}$. Then $f_1$ and $f_2$ have weak continuity and strong continuity, but
$f_1 \times f_2$ lacks both $(L(c_1,c_1),L(c_1,c_1))$-weak continuity and $(L(c_1,c_1),L(c_1,c_1))$-strong continuity.
\end{exl}

\begin{proof} It is easy to see
that $f_1$ and $f_2$ have weak continuity and strong continuity, and that
$p=(0,0) \adj_{L(c_1,c_1)} (1,2)=p'$. However
\[ (f_1 \times f_2)(p)= \{(0,0)\}
\mbox{ and }
 (f_1 \times f_2)(p')= \{(0,2)\},
\]
are not $L(c_1,c_1)$-adjacent, so
$f_1 \times f_2$ lacks $(L(c_1,c_1),L(c_1,c_1))$-weak continuity and therefore lacks 
$(L(c_1,c_1),L(c_1,c_1))$-strong continuity.
\end{proof}

For the lexicographic adjacency,
the following shows there is not a
general factor property for
weak or strong continuity.

\begin{exl}
\label{lex-weakStrong-factor-exl}
Let $f_1: ([0,1]_{\Z}, c_1) \multimap ([0,1]_{\Z}, c_1)$
be the multivalued function
$f_1(x)= [0,1]_{\Z}$. Let
$f_2: ([0,1]_{\Z}, c_1) \multimap (\{0,2\}, c_1)$ be
the multivalued function
$f_2(x) = \{2x\}$. Then
$f_1 \times f_2: [0,1]_{\Z}^2 \multimap [0,1]_{\Z} \times \{0,2\}$ has $(L(c_1,c_1),L(c_1,c_1))$-weak and $(L(c_1,c_1),L(c_1,c_1))$-strong continuity, although
$f_2$ lacks both weak and strong continuity.
\end{exl}

\begin{proof}
It is easy to see that $f_2$ lacks weak and strong continuity. Since 
\[ (f_1 \times f_2)(0,0)=(f_1 \times f_2)(1,0)=\{(0,0), (1,0)\},
\]
\[ (f_1 \times f_2)(0,1)=(f_1 \times f_2)(1,1)=\{(0,2),(1,2)\},
\]
it follows easily that $f_1 \times f_2$ has both
$(L(c_1,c_1),L(c_1,c_1))$-weak continuity and $(L(c_1,c_1),L(c_1,c_1))$-strong continuity.
\end{proof}

\subsection{Continuous multifunctions}
\begin{comment}
\begin{figure}
%\includegraphics[height=2.75in]{partitions.eps}
\caption{The digital image
$X=[0,2]_{\Z}\times [0,1]_{\Z}$ with
its partitions $S(X,2)$ with member coordinates on heavy lines, and $S(X,6)$
with member coordinates on both heavy and light lines. In the notation used in the proof of
Lemma~\ref{gcm-subdiv}, we have,
e.g., $I(7/6,2/3)=(1,1/2)$.
}
\label{partitions-fig}
\end{figure}
\end{comment}

\begin{lem}
\label{gcm-subdiv}
\rm{\cite{Boxer16a}}
Let $X \subset \Z^m$, $Y \subset \Z^n$.
Let $F: (X,c_a) \multimap (Y,c_b)$ be a
continuous multivalued function. Let
$f: (S(X,r),c_a) \to (Y,c_b)$ be a continuous function that
induces $F$. Let $s \in \N$. Then there is a
continuous function
$f_s: (S(X,rs),c_a) \to (Y,c_b)$ that induces $F$. $\qed$
\end{lem}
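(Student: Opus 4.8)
The plan is to realize the desired map as a composite $f_s = f \circ \pi$, where $\pi\colon (S(X,rs),c_a) \to (S(X,r),c_a)$ is the natural \emph{coarsening} map sending a point of the finer subdivision to the point of the coarser subdivision whose half-open cell contains it. Concretely, for $p = (z_1/(rs), \ldots, z_n/(rs)) \in S(X,rs)$ with each $z_i \in \Z$, I would set
\[ \pi(p) = \Bigl( \tfrac{1}{r}\lfloor z_1/s\rfloor, \ldots, \tfrac{1}{r}\lfloor z_n/s\rfloor \Bigr) \]
(this is the coarsening map; for instance, with $r=2$ and $s=3$ it sends $(7/6,2/3)$ to $(1,1/2)$). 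First I would check that $\pi$ is well defined, i.e.\ that $\pi(p) \in S(X,r)$: applying $E_r$ to $\pi(p)$ produces the tuple with entries $\lfloor \lfloor z_i/s\rfloor/r\rfloor$, and the nested-floor identity $\lfloor\lfloor z/s\rfloor/r\rfloor = \lfloor z/(rs)\rfloor$ shows this equals $E_{rs}(p) \in X$. The same computation simultaneously records the key fiber-compatibility relation $E_r \circ \pi = E_{rs}$, which I will use at the end.

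Next I would establish that $\pi$ is $(c_a,c_a)$-continuous, which I expect to be the crux of the argument. By Theorem~\ref{cont-by-adj} it suffices to check that $c_a$-adjacent points $p=(z_i/(rs))$ and $p'=(z_i'/(rs))$ are sent to equal or $c_a$-adjacent points. Adjacency in $S(X,rs)$ means the integer tuples $(z_i)$ and $(z_i')$ are $c_a$-adjacent in $\Z^n$: they agree in all but at most $a$ coordinates, and in each of those they differ by exactly $1$. For a coordinate with $z_i = z_i'$ the floors $\lfloor z_i/s\rfloor$ and $\lfloor z_i'/s\rfloor$ coincide, while for a coordinate with $|z_i - z_i'| = 1$ the monotonicity of the floor gives $|\lfloor z_i/s\rfloor - \lfloor z_i'/s\rfloor| \in \{0,1\}$ (since $s \ge 1$). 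Hence the floored tuples agree in all but at most $a$ coordinates and differ by exactly $1$ wherever they differ, which is precisely the $c_a$-adjacency (or equality) of $\pi(p)$ and $\pi(p')$ in $S(X,r)$. With continuity of $\pi$ in hand, $f_s = f\circ\pi$ is continuous by Theorem~\ref{composition}.

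Finally I would verify that $f_s$ induces $F$. Using $E_r\circ\pi = E_{rs}$, every $p \in E_{rs}^{-1}(x)$ satisfies $\pi(p)\in E_r^{-1}(x)$, so $\pi$ restricts to a map $E_{rs}^{-1}(x) \to E_r^{-1}(x)$; this restriction is onto, because any $q=(w_i/r)\in E_r^{-1}(x)$ is the image of the point $p=(w_i s/(rs)) = q$ viewed in $S(X,rs)$, which lies in $E_{rs}^{-1}(x)$ and satisfies $\pi(p)=q$ since $\lfloor w_i s/s\rfloor = w_i$. Therefore
\[ \bigcup_{p\in E_{rs}^{-1}(x)} \{f_s(p)\} = \bigcup_{p\in E_{rs}^{-1}(x)} \{f(\pi(p))\} = \bigcup_{q\in E_r^{-1}(x)} \{f(q)\} = F(x), \]
so $f_s$ induces $F$, which finishes the argument. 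The only genuine obstacle is the continuity check for $\pi$, and even there the work reduces to the elementary fact that dividing by $s\ge 1$ and flooring changes a unit step by at most a unit; the rest is bookkeeping with the nested-floor identity.
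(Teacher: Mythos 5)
Your proposal is correct and takes essentially the same approach as the source: this paper quotes the lemma from \cite{Boxer16a} without reproducing the proof, but the argument there is built on exactly your coarsening map (a commented-out figure caption in the source refers to the map $I$ of that proof with $I(7/6,2/3)=(1,1/2)$, which is precisely your $\pi$ with $r=2$, $s=3$). All three of your steps check out: well-definedness and the fiber relation $E_r\circ\pi=E_{rs}$ via the nested-floor identity $\lfloor\lfloor z/s\rfloor/r\rfloor=\lfloor z/(rs)\rfloor$, $(c_a,c_a)$-continuity of $\pi$ because a unit step survives division by $s$ and flooring as a step of at most one, and the induced-function verification via surjectivity of $\pi$ on fibers.
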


For the $NP_v$ adjacency, we have
the following.

\begin{thm}
\label{multi-prod-thm}
\rm{\cite{Boxer16a}}
Given multivalued functions
$F_i: (X_i,c_{a_i}) \multimap (Y_i,c_{b_i})$,
$1 \leq i \leq v$, each
$F_i$ is continuous if and only if the product multivalued function
\[ \Pi_{i=1}^v F_i: (\Pi_{i=1}^v X_i, NP_v(c_{a_1},\ldots, c_{a_v})) \multimap (\Pi_{i=1}^v Y_i, NP_v(c_{b_1}, \ldots, c_{b_v})) \]
is continuous. $\qed$
\end{thm}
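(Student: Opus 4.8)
The plan is to reduce the multivalued statement to the single-valued product theorem (Theorem~\ref{prod-cont}) after establishing that the subdivision construction commutes with the $NP_v$ product. Writing $X_i \subset \Z^{n_i}$ and $\Pi_{i=1}^v X_i \subset \Z^{n}$ with $n=\sum_i n_i$, I would first prove the structural identity
\[ S\Big(\Pi_{i=1}^v X_i,\, r\Big) \;=\; \Pi_{i=1}^v S(X_i,r), \]
and the fact that, under this identification, the induced $NP_v(c_{a_1},\ldots,c_{a_v})$-adjacency on $\Z^n_r$ coincides with $NP_v$ applied to the induced $c_{a_i}$-adjacencies on the blocks $\Z^{n_i}_r$. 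Both are immediate from the definitions: the floor map acts blockwise, so a point lies in the left-hand set iff each block lies in the corresponding $S(X_i,r)$; and since $NP_v$ is phrased purely in terms of blockwise equality or $c_{a_i}$-adjacency, rescaling by $r$ preserves it block by block. I would also record that $E_r^{-1}(x_1,\ldots,x_v)=\Pi_{i=1}^v E_r^{-1}(x_i)$.

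For the forward direction, suppose each $F_i$ is induced by a continuous $f_i\colon (S(X_i,r_i),c_{a_i})\to(Y_i,c_{b_i})$. Using Lemma~\ref{gcm-subdiv} I would refine to a common level $r=\mathrm{lcm}(r_1,\ldots,r_v)$, obtaining continuous $\tilde f_i\colon (S(X_i,r),c_{a_i})\to(Y_i,c_{b_i})$ still inducing $F_i$. By Theorem~\ref{prod-cont} and the structural identity, $\Pi_{i=1}^v \tilde f_i$ is $(NP_v(c_{a_1},\ldots,c_{a_v}),NP_v(c_{b_1},\ldots,c_{b_v}))$-continuous on $S(\Pi_i X_i,r)$. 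Finally, taking the union of a product map over the product box $E_r^{-1}(x)=\Pi_i E_r^{-1}(x_i)$ returns the product of the unions, so $\Pi_i \tilde f_i$ induces $\Pi_i F_i$; hence $\Pi_i F_i$ is continuous. This direction is clean precisely because the inducing map is itself a product.

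For the converse, suppose $\Pi_{i=1}^v F_i$ is induced by a continuous $f\colon (S(\Pi_i X_i,r),NP_v)\to(\Pi_i Y_i,NP_v)$, and set $h_i=p_i\circ f$. The projection $p_i$ is continuous by Theorem~\ref{projection-cont}, and the base-point inclusion $I_i$ of Proposition~\ref{natural-inject-product-cont} is continuous into $NP_v$ since $\times_{i=1}^v\kappa_i=NP_1\ge_d NP_v$; thus by Theorem~\ref{composition} the map $g_i=h_i\circ \tilde I_i\colon (S(X_i,r),c_{a_i})\to(Y_i,c_{b_i})$ is continuous, where $\tilde I_i$ fixes chosen representatives $\xi_j\in E_r^{-1}(x_j^{\ast})$ in the remaining blocks. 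One checks directly that the multivalued function $G_i$ induced by $g_i$ satisfies $G_i(x_i)\subseteq F_i(x_i)$, since each $g_i(\eta)$ is the $i$-th coordinate of some $f(\zeta)$ with $\zeta\in E_r^{-1}(I_i(x_i))$; equivalently, $h_i$ induces $F_i\circ\pi_i$ on $X$, and restricting to a slice gives at most $F_i$.

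The hard part is the reverse inclusion $F_i(x_i)\subseteq G_i(x_i)$: because $f$ need not be a product map, fixing the other blocks at $\xi_j$ may fail to realize every element of $F_i(x_i)=p_i\big(\Pi_j F_j(x_j^\ast)\big)$, as a given element is produced by $f$ only for some unrelated choice of the other-block representatives. To close this gap I would not slice at level $r$ but pass to a finer subdivision $S(X_i,rN)$ and use the extra subdivision directions to enumerate the fixed finite box $B=\Pi_{j\neq i}E_r^{-1}(x_j^{\ast})$: on the refined cells of $E_{rN}^{-1}(x_i)$ one lets $g_i$ run through the values $h_i(\eta,\beta)$ with $\beta\in B$, advancing between neighbouring cells by a single block-step so that continuity of $h_i$ forces $g_i$ to be $(c_{a_i},c_{b_i})$-continuous. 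The union over the refined box then recovers all of $F_i(x_i)$, so $g_i$ induces $F_i$. Verifying that this enumeration respects adjacency is the technical heart of the proof; an alternative is to exploit the $c_{a_j}$-connectedness of each fiber $E_r^{-1}(x_j^{\ast})$ to connect any realizing $\zeta$ to the base representatives along a path on which $f$, hence $h_i$, varies continuously.
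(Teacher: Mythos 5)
First, a caveat on the comparison itself: the paper states Theorem~\ref{multi-prod-thm} as a quoted result of \cite{Boxer16a} and contains no proof of it, so your attempt can only be judged on its own merits. Your forward direction is complete and correct, and is the natural reduction: the blockwise identity $S(\Pi_{i=1}^v X_i,r)=\Pi_{i=1}^v S(X_i,r)$, the compatibility of the induced adjacency on $\Z^n_r$ with $NP_v$ of the induced block adjacencies, passage to a common subdivision level $r=\mathrm{lcm}(r_1,\ldots,r_v)$ via Lemma~\ref{gcm-subdiv}, continuity of $\Pi_i \tilde f_i$ via Theorem~\ref{prod-cont}, and the observation that the union of a product map over the product box $E_r^{-1}(x)=\Pi_i E_r^{-1}(x_i)$ is the product of the unions, together show that $\Pi_i \tilde f_i$ induces $\Pi_i F_i$. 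Your diagnosis of the converse is also correct: slicing a generating function $f$ at fixed representatives $\xi_j$ yields only $G_i(x_i)\subseteq F_i(x_i)$, and equality genuinely fails --- e.g., for $X_1=X_2=\{0\}$, $F_1(0)=F_2(0)=[0,1]_{\Z}$, one can define a continuous $f$ bijectively on the four points of $S(\{0\}^2,2)$ so that \emph{every} choice of representative in the first block makes the second-coordinate slice constant.

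The converse as written, however, is a program rather than a proof, and the part you defer is the entire content of that direction. The construction of $g_i$ on $S(X_i,rN)$ is under-specified at exactly the two points where it can break. First, the enumeration of $B=\Pi_{j\neq i}E_r^{-1}(x_j^{\ast})$ must follow a closed spanning walk in $B$ --- this uses that $B$, a product of boxes with the $NP_{v-1}$-adjacency, is connected --- with the ``phase'' along the walk determined by a single fine coordinate and arranged so that the phases on the two sides of every subcell boundary land at equal or walk-adjacent vertices of $B$; otherwise two fine-adjacent points lying over equal or $c_{a_i}$-adjacent coarse points receive values $h_i(\eta,\beta)$ and $h_i(\eta',\beta')$ with $\beta,\beta'$ far apart in $B$, and continuity of $h_i$ then gives no control whatsoever over $g_i$. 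Second, $c_{a_i}$-adjacency in $S(X_i,rN)$ includes diagonal moves that change several fine coordinates simultaneously, so the phase function must move by at most one walk-step under \emph{every} such move, not only under the single ``advancing'' direction you describe; your sketch never confronts this. Both issues are repairable (a closed spanning walk traversed so that boundary phases sit at a common base vertex, phase read off one fine coordinate modulo $N$, and then a case check that each adjacency in $S(X_i,rN)$ maps to a strong-product adjacency in $S(X_i,r)\times B$, after which $g_i=h_i\circ\phi$ is continuous and its value set over each fiber is all of $F_i(x_i)$), so the strategy is sound; but until the walk, the phase function, and the adjacency verification are actually written out, the converse has a genuine gap at precisely its acknowledged technical heart. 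That this step is where the real difficulty lives is corroborated by the paper itself: for the tensor and Cartesian product adjacencies, where no such fiber-enumeration argument is available, the corresponding converses are explicitly left open as ``difficult problems.''
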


For the tensor product, since a
single-valued function can be considered as multivalued,
Example~\ref{need-local-1-1} shows there is no general product rule for the continuity
of multivalued functions. 
However, we have the following.

\begin{thm}
\label{tensor-prod-multi-cont}
Let $F_i: (X_i,c_{a_i}) \multimap (Y_i,c_{b_i})$ be a continuous
multivalued function between digital images, $1 \le i \le v$.
Let $X=\Pi_{i=1}^v X_i$,
$Y=\Pi_{i=1}^v Y_i$,
$F=\Pi_{i=1}^v F_i: X \multimap Y$.
If for some positive integer~$r$ and for all~$i$ there is a continuous locally one-to-one
function $f_i: (S(X_i,r),c_{a_i}) \to (Y_i,c_{b_i})$ that generates
$F_i$, then $F$ is
$(T(c_{a_1},\ldots,c_{a_v}),T(c_{b_1},\ldots,c_{b_v}))$-continuous and is generated by a function
that is locally one-to-one.
\end{thm}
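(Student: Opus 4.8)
The plan is to exhibit one explicit subdivision level together with a single-valued continuous, locally one-to-one map generating $F$, so that the conclusion follows from Definition~\ref{multi-cont} and Theorem~\ref{T-prod-continuity}. The crucial preliminary observation is that subdivision commutes with the tensor product. Writing $X_i \subset \Z^{m_i}$ and $X = \Pi_{i=1}^v X_i \subset \Z^m$ with $m = \sum_i m_i$, I would first check the set identity $S(X,r) = \Pi_{i=1}^v S(X_i,r)$: grouping the $m$ coordinates of a point of $\Z_r^m$ into the $v$ blocks, the floor condition defining $S(X,r)$ holds if and only if each block floors into the corresponding $X_i$, i.e. lies in $S(X_i,r)$. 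I would then verify that the adjacency on $S(X,r)$ induced by $T(c_{a_1},\ldots,c_{a_v})$ coincides with the tensor product $T(c_{a_1},\ldots,c_{a_v})$ of the induced $c_{a_i}$-adjacencies on the factors $S(X_i,r)$: two points of $S(X,r)$ are adjacent if and only if the corresponding integer points are $T$-adjacent in $\Z^m$, which by Definition~\ref{tensor-def} happens if and only if each block is $c_{a_i}$-adjacent, and this is exactly blockwise adjacency in $\Pi_{i=1}^v S(X_i,r)$. Thus $(S(X,r),T(c_{a_1},\ldots,c_{a_v})) = \Pi_{i=1}^v (S(X_i,r),c_{a_i})$ as digital images.

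With this identification in hand, I would set $f = \Pi_{i=1}^v f_i : S(X,r) \to Y$, where the $f_i$ are the given continuous locally one-to-one generators of the $F_i$. Since each $f_i$ is $(c_{a_i},c_{b_i})$-continuous and locally one-to-one, Theorem~\ref{T-prod-continuity}, applied with $\kappa_i = c_{a_i}$ and $\lambda_i = c_{b_i}$, immediately yields that $f$ is $(T(c_{a_1},\ldots,c_{a_v}),T(c_{b_1},\ldots,c_{b_v}))$-continuous and locally one-to-one.

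Next I would check that $f$ generates $F$. Because $E_r$ acts blockwise, $E_r^{-1}(x_1,\ldots,x_v) = \Pi_{i=1}^v E_r^{-1}(x_i)$, and since the value of $f$ at a subdivision point is the tuple of the $f_i$-values on its blocks, the union $\bigcup_{x' \in E_r^{-1}(x)} \{f(x')\}$ factors as $\Pi_{i=1}^v \bigl( \bigcup_{\xi \in E_r^{-1}(x_i)} \{f_i(\xi)\} \bigr) = \Pi_{i=1}^v F_i(x_i) = F(x)$, the first equality because the choices in the different blocks are independent and the second because $f_i$ generates $F_i$. Hence $F$ is induced by the single-valued continuous function $f$ on $S(X,r)$, so by Definition~\ref{multi-cont} the map $F$ is continuous; and since $f$ is locally one-to-one, $F$ is generated by a locally one-to-one function, as claimed.

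The only real content is the ``subdivision commutes with tensor product'' identification of the first paragraph; once it is in place the result is a formal consequence of Theorem~\ref{T-prod-continuity} together with the blockwise structure of $E_r$, and no appeal to Lemma~\ref{gcm-subdiv} is needed because the hypothesis already supplies a common subdivision level $r$. The one point to watch is keeping the two meanings of $T(c_{a_1},\ldots,c_{a_v})$---as an adjacency on the integer lattice $\Z^m$ and as the induced adjacency on the subdivision $S(X,r)$---carefully aligned, which is precisely the task carried out in the first paragraph.
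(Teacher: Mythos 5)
Your proposal is correct and follows essentially the same route as the paper: form $f=\Pi_{i=1}^v f_i$ on $S(X,r)=\Pi_{i=1}^v S(X_i,r)$, invoke Theorem~\ref{T-prod-continuity} for continuity and local injectivity, and check that $f$ induces $F$. The only difference is cosmetic and to your credit: you prove explicitly the identification $(S(X,r),T(c_{a_1},\ldots,c_{a_v}))=\Pi_{i=1}^v(S(X_i,r),c_{a_i})$ and the blockwise factorization of $E_r^{-1}$, both of which the paper's proof uses silently, and you avoid the paper's redundant second verification of the continuity of $f$.
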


\begin{proof}
Let $f = \Pi_{i=1}^v f_i: \Pi_{i=1}^v S(X_i,r) \to Y$. It follows from
Theorem~\ref{T-prod-continuity} that
$f$ is $(T(c_{a_1},\ldots,c_{a_v}),T(c_{b_1},\ldots,c_{b_v}))$-continuous.
Further, given $q \in F(p)$
where $p=(x_1,\ldots, x_v)$ for
$x_i \in X_i$ and $q=(y_1,\ldots,y_v)$ where $y_i \in F_i(x_i)$, there exists
$x_i' \in S(\{x_i\},r) \subset S(X_i,r)$ such that $f_i(x_i')=y_i$.
Therefore, $f(x_1',\ldots,x_v')=q$.

For $w \adj_{T(c_{a_1},\ldots,c_{a_v})} w'$ in $S(X,r)=\Pi_{i=1}^v S(X_i,r)$, we
have $w=(w_1,\ldots,w_v)$ and
$w'=(w_1',\ldots,w_v')$,
where $w_i,w_i' \in S(X_i,r)$ and
$w_i \adj_{c_{a_i}} w_i'$. Since $f_i$ is locally
one-to-one and continuous, we have
$f_i(w_i) \adj_{c_{b_i}} f_i(w_i')$.
It follows that $f(w_1,\ldots,w_v) \adj_{T(c_{b_1},\ldots,c_{b_v})}
f(w_1', \ldots, w_v')$.
This allows us to conclude that $f$ is
$(T(c_{a_1},\ldots,c_{a_v}),T(c_{b_1},\ldots,c_{b_v}))$-continuous.
Thus, $f$ generates~$F$.

Let $p'=(x_1',\ldots,x_v') \adj_{T(\kappa_1,\ldots,\kappa_v)} p$ in $X$, where $x_i' \in X_i$.
Since $f_i$ is locally one-to-one,
$f_i(x_i) \adj_{\lambda_i} f_i(x_i')$ for all $i$. Therefore,
$f(p) \adj_{T(\lambda_1,\ldots,\lambda_v)} f(p')$, so $f$ is
locally one-to-one.
\end{proof}

Deciding whether the converse of
Theorem~\ref{tensor-prod-multi-cont} is true appears
to be a difficult problem.

For the Cartesian product adjacency, we have the following.

\begin{thm}
\label{tensor-factor-cont-implies-prod}
Let $F_i: (X_i,\kappa_i) \multimap (Y_i,\lambda_i)$ be a
multivalued function between digital images, where $\kappa_i=c_{a_i}$, $\lambda_i=c_{b_i}$, $1 \le i \le v$.
Let $X = \Pi_{i=1}^v X_i$,
$Y = \Pi_{i=1}^v Y_i$,
$F = \Pi_{i=1}^v F_i: X \multimap Y$. If each $F_i$ is continuous, then $F$ is $(\times_{i=1}^v \kappa_i,\times_{i=1}^v \lambda_i)$-continuous.
\end{thm}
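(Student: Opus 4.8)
The plan is to exhibit, for a single common subdivision parameter $r$, a single-valued continuous function on $S(X,r)$ that induces $F$, thereby verifying continuity directly from Definition~\ref{multi-cont}.

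First I would use the continuity of each $F_i$ to obtain, for each $i$, an integer $r_i$ and a continuous single-valued function $f_i : (S(X_i,r_i),c_{a_i}) \to (Y_i,c_{b_i})$ that induces $F_i$. To bring all of these to a single subdivision level I would take $r$ to be a common multiple of $r_1,\ldots,r_v$ (for instance $r = r_1\cdots r_v$) and apply Lemma~\ref{gcm-subdiv} once for each index, replacing $f_i$ by a continuous function $f_i : (S(X_i,r),c_{a_i}) \to (Y_i,c_{b_i})$ that still induces $F_i$.

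The second step, which I expect to be the crux, is to identify the subdivision of the product with the product of the subdivisions. Because flooring and membership are computed coordinatewise, for a point $z$ of the relevant integer lattice one has $\lfloor z\rfloor \in X=\Pi_{i=1}^v X_i$ if and only if the $i$-th block of $\lfloor z\rfloor$ lies in $X_i$ for every $i$; hence $S(X,r)=\Pi_{i=1}^v S(X_i,r)$ as sets. I would then check that the $\times_{i=1}^v c_{a_i}$-adjacency induced on $S(X,r)$ agrees with the Cartesian product of the $c_{a_i}$-adjacencies induced on the factors $S(X_i,r)$: two subdivision points are adjacent precisely when their $r$-scaled integer preimages differ (and are $c_{a_i}$-adjacent) in exactly one block and coincide in the others, which is the same condition under both descriptions. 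This identification is the delicate point, since it requires that the subdivision construction commute with Cartesian products and respect the Cartesian product adjacency $\times_{i=1}^v c_{a_i}$ rather than any single $c_a$-adjacency on the ambient lattice.

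With this identification in hand, I would set $f=\Pi_{i=1}^v f_i : \Pi_{i=1}^v S(X_i,r) \to \Pi_{i=1}^v Y_i$. Since each $f_i$ is continuous, Theorem~\ref{Cart-prod-cont} yields that $f$ is $(\times_{i=1}^v c_{a_i},\times_{i=1}^v c_{b_i})$-continuous, that is, continuous as a map $(S(X,r),\times_{i=1}^v c_{a_i})\to (Y,\times_{i=1}^v c_{b_i})$. Finally I would verify that $f$ induces $F$: writing $E_r$ for the natural map of $S(X,r)$ and noting it factors coordinatewise as the product of the maps $E_r^{(i)}$ on the $S(X_i,r)$, one has $E_r^{-1}(x)=\Pi_{i=1}^v (E_r^{(i)})^{-1}(x_i)$ for $x=(x_1,\ldots,x_v)$, so that $\bigcup_{x'\in E_r^{-1}(x)}\{f(x')\}=\Pi_{i=1}^v\bigcup_{x_i'\in (E_r^{(i)})^{-1}(x_i)}\{f_i(x_i')\}=\Pi_{i=1}^v F_i(x_i)=F(x)$. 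Thus $F$ is induced by the continuous function $f$, and is therefore continuous.
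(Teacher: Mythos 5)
Your proposal is correct and follows essentially the same route as the paper's proof: pass to a common subdivision level via Lemma~\ref{gcm-subdiv}, form the product $f=\Pi_{i=1}^v f_i$ of the generating functions, establish its $(\times_{i=1}^v c_{a_i},\times_{i=1}^v c_{b_i})$-continuity (you cite Theorem~\ref{Cart-prod-cont} where the paper verifies adjacency preservation by hand), and check that $f$ induces $F$. If anything, you are more careful than the paper on two points it leaves implicit: the identification $S(X,r)=\Pi_{i=1}^v S(X_i,r)$ with the agreement of the induced adjacencies, and the pointwise verification $F(x)=\Pi_{i=1}^v F_i(x_i)$ via the factorization of $E_r^{-1}$.
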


\begin{proof}
Suppose each $F_i$ is continuous.
By Lemma~\ref{gcm-subdiv}, there
exists $r \in \N$ and generating
functions $f_i: S(X_i,r) \to Y_i$
of $F_i$.

We wish to show that $f=\Pi_{i=1}^v f_i$ generates $F$. Suppose $p \adj_{\times_{i=1}^v \kappa_i} p'$ in $S(X,r)$. Then
$p=(x_1,\ldots,x_v)$ and
$p'=(x_1',\ldots,x_v')$ where
$x_i,x_i' \in S(X_i,r)$ and
$x_i=x_i'$ for all but one index~$j$, with $x_j \adj_{\kappa_j} x_j'$. Since each
$f_i$ is $(\kappa_i,\lambda_i)$-continuous, we have
$f_j(x_j)=f_j(x_j')$ or
$f_j(x_j)\adj_{\lambda_j} f_j(x_j')$
and for all indices $i \neq j$ we have $f_i(x_i)=f_i(x_i')$.
Thus we have
$f(p)=f(p')$ or $f(p)\adj_{\times_{i=1}^v \lambda_i} f(p')$. Thus, $f$ is
$(\times_{i=1}^v \kappa_i, \times_{i=1}^v \lambda_i)$-continuous.

Let $y=(y_1,\ldots,y_v) \in F(X)$, where $y_i \in Y_i$. Then
there exists $x_i \in S(X_i,r)$
such that $f_i(x_i)=y_i$. For
$p=(x_1,\ldots,x_v)$, we have
$f(p)=(y_1,\ldots,y_v)$. Thus,
$f$ generates $F$, so $F$ is continuous.
\end{proof}

Deciding whether the converse of
Theorem~\ref{tensor-factor-cont-implies-prod} is true appears
to be a difficult problem.

For the lexicographic adjacency,
there is no general product rule for the
continuity of multivalued functions, as shown in Example~\ref{lexico-factor-not-implies-prod} (since a single-valued function can be regarded
as multivalued). However, we have
the following.

\begin{thm}
\label{multi-prod-cont-lex}
Let $F_i: (X_i,\kappa_i) \multimap (Y_i,\lambda_i)$ be a
continuous multivalued function
between digital images, $1 \le i \le v$. Let $X=\Pi_{i=1}^v X_i$,
$Y=\Pi_{i=1}^v Y_i$,
$F=\Pi_{i=1}^v F_i: X \multimap Y$. If each $F_i$ is generated by
a function $f_i: (S(X_i,r),\kappa_i)\to Y_i$ that
is locally one-to-one, then $F$
is $(L(\kappa_1,\ldots,\kappa_v),L(\lambda_1,\dots,\lambda_v))$-continuous.

\begin{proof}
By Theorem~\ref{lexico-1-1},
the single-valued function $f=\Pi_{i=1}^v f_i: \Pi_{i=1}^v S(X_i,r) \to Y$ is
$(L(\kappa_1,\ldots,\kappa_v),L(\lambda_1,\dots,\lambda_v))$-continuous. Further,
given $y=(y_1,\ldots,y_v) \in F(X)$ with $y_i \in Y_i$, there
exist $x_i' \in S(\{x_i\},r) \subset S(X_i,r)$ such
that $f_i(x_i')=y_i$. Therefore,
$y=f(x_1',\ldots,x_v')\in F(x_1,\ldots,x_v)$. Therefore,
$f$ generates $F$, and the
assertion follows.
\end{proof}
\end{thm}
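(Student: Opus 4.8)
The plan is to exhibit an explicit single-valued continuous generating function for $F$ and then invoke Definition~\ref{multi-cont}. The natural candidate is the product map $f = \Pi_{i=1}^v f_i$ of the given generating functions, and the work consists of checking that it lands in the right domain, is $L$-continuous, and actually generates $F$.

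First I would observe that the domain of this product is precisely the subdivision of the product image. Writing $X_i \subset \Z^{n_i}$ and $X = \Pi_{i=1}^v X_i \subset \Z^{n}$ with $n = \sum_i n_i$, the floor condition defining $S(X,r)$ splits coordinatewise, so that $S(X,r) = \Pi_{i=1}^v S(X_i,r)$ and the evaluation map $E_r$ on $S(X,r)$ is the product of the maps $E_r$ on the factors. Hence $f = \Pi_{i=1}^v f_i$ is a well-defined single-valued function $f: S(X,r) \to Y$. Next I would establish its $L$-continuity: each $f_i : (S(X_i,r),\kappa_i) \to (Y_i,\lambda_i)$ is continuous and locally one-to-one by hypothesis, so Theorem~\ref{lexico-1-1} applies to the factors $S(X_i,r)$ (carrying their induced adjacencies) and yields that $f$ is $(L(\kappa_1,\ldots,\kappa_v),L(\lambda_1,\ldots,\lambda_v))$-continuous.

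Then I would verify that $f$ generates $F$, i.e. that $F(x) = \bigcup_{x' \in E_r^{-1}(x)} \{ f(x') \}$ for each $x = (x_1,\ldots,x_v)$. For the forward inclusion, any $y = (y_1,\ldots,y_v) \in F(x) = \Pi_i F_i(x_i)$ has each $y_i = f_i(x_i')$ for some $x_i' \in E_r^{-1}(x_i) \subset S(X_i,r)$, because $f_i$ generates $F_i$; assembling the coordinates gives $x' = (x_1',\ldots,x_v') \in E_r^{-1}(x)$ with $f(x') = y$. The reverse inclusion is symmetric, reading the same coordinate equalities backward. Since $f$ is continuous and generates $F$, the conclusion follows at once from Definition~\ref{multi-cont}.

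The step I expect to require the most care is the identification $S(X,r) = \Pi_i S(X_i,r)$ together with the matching of the adjacency that $S(X,r)$ inherits from $L(\kappa_1,\ldots,\kappa_v)$ on $X \subset \Z^n$ with the lexicographic adjacency $L$ built from the induced factor adjacencies on the $S(X_i,r)$. This compatibility is what licenses feeding the factor generating functions into Theorem~\ref{lexico-1-1} and then reinterpreting the resulting product map as a generating function in the sense of the subdivision construction; it holds because $L$-adjacency of scaled-up integer points unwinds, factor by factor, into the induced $\kappa_i$-adjacencies, but it is the one point that genuinely links the lexicographic structure to the subdivision machinery. Everything else is a routine unwinding of the definitions of $F = \Pi_i F_i$ and of generation.
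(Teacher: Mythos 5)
Your proposal is correct and follows essentially the same route as the paper: apply Theorem~\ref{lexico-1-1} to the product $f=\Pi_{i=1}^v f_i$ of the locally one-to-one generating functions to get $(L(\kappa_1,\ldots,\kappa_v),L(\lambda_1,\ldots,\lambda_v))$-continuity, then check coordinatewise that $f$ generates $F$. Your explicit verification of the identification $S(X,r)=\Pi_{i=1}^v S(X_i,r)$ (with matching induced adjacencies) and of both inclusions in the generation condition only makes explicit what the paper's proof leaves implicit.
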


The paper~\cite{egs08} has several
results concerning the following notions.

\begin{definition}
\rm{\cite{egs08}}
Let $(X,\kappa) \subset \Z^n$ be a digital image
and $Y \subset X$. We say that $Y$ is
a {\em $\kappa$-retract of $X$} if there exists a $\kappa$-continuous multivalued
function $F: X \multimap Y$ (a 
{\em multivalued $\kappa$-retraction}) such that
$F(y) = \{y\}$ if $y \in Y$.
\end{definition}

We generalize Theorem~\ref{ret-thm} as follows.

\begin{thm}
\label{multi-ret-thm}
\rm{\cite{Boxer16a}}
For $1 \leq i \leq v$, let $A_i \subset (X_i,\kappa_i) \subset \Z^{n_i}$.
Suppose $F_i: X_i \multimap A_i$ is a continuous multivalued function for all~$i$. Then $F_i$ is a multivalued retraction for all~$i$ if and only if
$F=\Pi_{i=1}^v F_i: \Pi_{i=1}^v X_i \multimap \Pi_{i=1}^v A_i$ is a multivalued
$NP_v(\kappa_1,\ldots,\kappa_v)$-retraction. $\qed$
\end{thm}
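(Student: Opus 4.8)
The plan is to prove both implications of the biconditional, using Theorem~\ref{multi-prod-thm} to transfer continuity between the factors and the product, and an elementary analysis of the product set $\Pi_{i=1}^v F_i(a_i)$ to transfer the fixing condition $F(a)=\{a\}$. Throughout I would assume each $A_i$ is nonempty; otherwise $A=\Pi_{i=1}^v A_i$ is empty and the statement holds vacuously.

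For the forward direction, suppose each $F_i$ is a multivalued retraction, so each $F_i$ is continuous and $F_i(a_i)=\{a_i\}$ for every $a_i \in A_i$. Since each $F_i$ is continuous, Theorem~\ref{multi-prod-thm} gives that $F=\Pi_{i=1}^v F_i$ is $NP_v(\kappa_1,\ldots,\kappa_v)$-continuous, which is the continuity half of being a multivalued retraction. For the fixing condition, I would compute directly from the definition of the product multivalued function that, for $a=(a_1,\ldots,a_v) \in A$,
\[ F(a) = \Pi_{i=1}^v F_i(a_i) = \Pi_{i=1}^v \{a_i\} = \{(a_1,\ldots,a_v)\} = \{a\}. \]
Hence $F$ is a multivalued $NP_v(\kappa_1,\ldots,\kappa_v)$-retraction.

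For the backward direction, suppose $F$ is a multivalued $NP_v(\kappa_1,\ldots,\kappa_v)$-retraction, so $F$ is continuous and $F(a)=\{a\}$ for every $a \in A$. Continuity of each $F_i$ is already part of the hypothesis of the theorem (and would in any case follow from the other implication of Theorem~\ref{multi-prod-thm}), so I would only need to verify the fixing condition factor by factor. Fixing an index $i$ and a point $a_i \in A_i$, I would choose, for each $j\neq i$, some $a_j \in A_j$ (possible by nonemptiness) and set $a=(a_1,\ldots,a_v)\in A$. Then $\Pi_{j=1}^v F_j(a_j) = F(a) = \{a\}$ is a nonempty singleton, and since a product of sets is a nonempty singleton exactly when each factor is a singleton, I conclude $F_j(a_j)=\{a_j\}$ for every $j$, and in particular $F_i(a_i)=\{a_i\}$. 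As $a_i$ was arbitrary, $F_i$ is a multivalued retraction.

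I expect no serious obstacle here: the real content is packaged inside Theorem~\ref{multi-prod-thm}, and the remainder is routine manipulation of Cartesian products. The only points deserving a little care are the nonemptiness assumption, used to extend a single coordinate to a full point of $A$, and the elementary fact that $\Pi_i S_i$ is a nonempty singleton iff every $S_i$ is a singleton—both straightforward to justify.
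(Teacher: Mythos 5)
Your proposal is correct. Note that the paper itself gives no proof of Theorem~\ref{multi-ret-thm} (it is quoted from~\cite{Boxer16a} with a \qed), and your argument is the standard route for this result: Theorem~\ref{multi-prod-thm} transfers continuity between the factors and the product (and since continuity of each $F_i$ is already hypothesized, only the fixing condition is really at issue), while the identity $F(a)=\Pi_{i=1}^v F_i(a_i)$ together with the elementary fact that a nonempty product of sets is a singleton exactly when every factor is a singleton settles the fixing condition in both directions. Your two points of care --- nonemptiness of the $A_j$ when extending a single coordinate $a_i$ to a point of $\Pi_{i=1}^v A_i$, and the singleton-product observation --- are exactly the details worth making explicit.
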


For the Cartesian product adjacency, we 
have the following.

\begin{thm}
\label{multi-retract-prod-Cart}
Let $r_i: X_i \multimap A_i$ be multivalued retractions, $1 \le i \le v$.
Let $X=\Pi_{i=1}^v X_i$, $A=\Pi_{i=1}^v A_i$, $r = \Pi_{i=1}^v r_i: X \multimap A$.
Then $r$ is a $\times_{i=1}^v \kappa_i$-multivalued retraction.
\end{thm}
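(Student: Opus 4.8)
The plan is to verify the two defining conditions of a multivalued $\times_{i=1}^v \kappa_i$-retraction for the product map $r$: that $r$ is a continuous multivalued function for the Cartesian product adjacency, and that $r(a) = \{a\}$ for every $a \in A$.

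For continuity, I would first note that, being a multivalued retraction, each factor $r_i \colon X_i \multimap A_i$ is by definition a $\kappa_i$-continuous multivalued function, where continuity is understood in the subdivision sense of Definition~\ref{multi-cont}. Since the source $X_i$ and target $A_i \subset X_i$ both carry $\kappa_i$, I can invoke Theorem~\ref{tensor-factor-cont-implies-prod} with $\lambda_i = \kappa_i$; that theorem then delivers at once that $r = \Pi_{i=1}^v r_i$ is $(\times_{i=1}^v \kappa_i, \times_{i=1}^v \kappa_i)$-continuous. This is the step that carries all the subdivision machinery, but it is entirely absorbed into the cited result.

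Next I would check the retraction identity by direct computation. For $a = (a_1, \ldots, a_v) \in A$ with $a_i \in A_i$, the definition of the product multivalued function gives
\[ r(a) = \Pi_{i=1}^v r_i(a_i), \]
and since each $r_i$ fixes points of $A_i$ as singletons, we have $r_i(a_i) = \{a_i\}$, whence $r(a) = \Pi_{i=1}^v \{a_i\} = \{a\}$.

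Combining these two observations establishes that $r$ is a $\times_{i=1}^v \kappa_i$-multivalued retraction. I do not anticipate a genuine obstacle: the substantive content---constructing a single generating function on a common subdivision for the product---has already been done in Theorem~\ref{tensor-factor-cont-implies-prod}. The only point needing a moment's care is the bookkeeping that, for a retraction, the source and target adjacencies coincide, allowing the specialization $\lambda_i = \kappa_i$ when that theorem is applied.
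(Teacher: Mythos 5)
Your proposal is correct and follows essentially the same route as the paper's proof: continuity of $r$ is obtained by applying Theorem~\ref{tensor-factor-cont-implies-prod} (with $\lambda_i=\kappa_i$), and the identity $r(a)=\Pi_{i=1}^v r_i(a_i)=\{a\}$ for $a\in A$ is verified directly from $r_i(a_i)=\{a_i\}$. Your remark that the source and target adjacencies coincide is the same implicit specialization the paper makes, so there is no substantive difference.
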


\begin{proof}
Since $r_i$ is a multivalued retraction, we must have
that $r_i(X_i)=A_i$ and $r_i(a_i)=\{a_i\}$ for all $a_i \in A_i$. Therefore,
$r(X)=A$ and $r(a)=\{a\}$ for all $a \in A$. By Theorem~\ref{tensor-factor-cont-implies-prod}, $r$ is continuous, and 
therefore is a multivalued retraction. 
\end{proof}

\subsection{Connectivity preserving multifunctions}
\begin{thm}
\label{normal-conn-preserv}
\rm{\cite{Boxer16a}}
Let $f_i: (X_i, \kappa_i) \multimap (Y_i, \lambda_i)$ be
a multivalued function between digital images,
$1 \leq i \leq v$. Then the product map
\[ \Pi_{i=1}^v f_i : (\Pi_{i=1}^v X_i, NP_v(\kappa_1, \ldots, \kappa_v)) \multimap (\Pi_{i=1}^v Y_i, NP_v(\lambda_1, \ldots, \lambda_v))
\]
is a connectivity preserving multifunction if and
only if each $f_i$
is a connectivity preserving multifunction. $\qed$
\end{thm}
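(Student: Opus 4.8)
The plan is to reduce the statement to two product characterizations already available in the paper by decomposing connectivity preservation into its two defining ingredients. By Proposition~\ref{mild-and-weak}, a multivalued function between digital images is connectivity preserving exactly when it has weak continuity and all of its point-images are connected. So I would prove the theorem by establishing separately that (i) the product $F = \Pi_{i=1}^v f_i$ has weak continuity if and only if every $f_i$ has weak continuity, and (ii) every point-image $F(p)$ is connected if and only if every point-image $f_i(x_i)$ is connected; combining these two equivalences through Proposition~\ref{mild-and-weak} then yields the result.

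Part (i) requires no new work: it is precisely Theorem~\ref{weak-prod}, which asserts that for the $NP_v$ adjacency the product multivalued map has weak continuity if and only if each factor does.

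For part (ii), I would observe that for $p = (x_1, \ldots, x_v)$ the definition of the product multivalued function gives $F(p) = \Pi_{i=1}^v f_i(x_i)$, and that this set carries exactly the $NP_v(\lambda_1, \ldots, \lambda_v)$-adjacency restricted from $Y$; that is, it is itself a product image $\Pi_{i=1}^v (f_i(x_i), \lambda_i)$. Applying Theorem~\ref{prod-connected} to this product shows that $F(p)$ is $NP_v$-connected if and only if each $f_i(x_i)$ is $\lambda_i$-connected. Quantifying over all $p$ on the product side, and over all $x_i \in X_i$ on the factor side, then gives the equivalence in (ii), using that each coordinate value $x_i$ occurs as the $i$-th coordinate of some $p \in X$.

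Assembling (i) and (ii) with Proposition~\ref{mild-and-weak} applied once to $F$ and once to each $f_i$ completes the argument. The one delicate point, and the main obstacle, is the treatment of empty sets: Theorem~\ref{prod-connected} lets one infer connectedness of a single factor from connectedness of the product only when the remaining factors are nonempty, since an empty factor collapses the entire product. Under the standing convention that multivalued functions take nonempty values this holds automatically; otherwise one must first dispose of the degenerate cases—an empty point-image $f_i(x_i)$, or an empty factor $X_j$ that forces $X$ itself to be empty so that connectivity preservation is vacuous—before invoking Theorem~\ref{prod-connected}.
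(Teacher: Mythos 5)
Your proposal is correct, but there is no in-paper proof to compare it against: Theorem~\ref{normal-conn-preserv} is quoted from~\cite{Boxer16a} and stated here with no argument. What your proposal actually accomplishes is a reassembly of the theorem from three other results that this paper does state: Proposition~\ref{mild-and-weak} (connectivity preservation equals weak continuity plus connected point-images), Theorem~\ref{weak-prod} (weak continuity is simultaneously a product and factor property for $NP_v$), and Theorem~\ref{prod-connected} (connectedness is simultaneously a product and factor property for $NP_v$). The decomposition is sound. In particular, your key observation in part (ii) is valid: $F(p) = \Pi_{i=1}^v f_i(x_i)$ is a product subset of $Y$, and since $NP_v$ is defined coordinatewise, the restriction of $NP_v(\lambda_1,\ldots,\lambda_v)$ to this subset coincides with the $NP_v$ adjacency built from the restrictions of the $\lambda_i$ to the sets $f_i(x_i)$, so Theorem~\ref{prod-connected} applies verbatim. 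The final assembly also works because the conjunction of the two coordinatewise equivalences commutes with the universal quantifier over $i$: ``for all $i$, ($f_i$ has weak continuity and connected point-images)'' is equivalent to ``(all $f_i$ have weak continuity) and (all $f_i$ have connected point-images),'' which is what Proposition~\ref{mild-and-weak} applied once to $F$ and once to each $f_i$ requires. You are also right to flag nonemptiness as the one delicate point: inferring that $f_i(x_i)$ is connected from the connectedness of $F(p)$ needs the companion sets $f_j(x_j)$, $j \neq i$, to be nonempty, and realizing each $x_i$ as a coordinate of some $p \in X$ needs each $X_j$ nonempty; under the standing conventions of this literature (digital images nonempty, multivalued functions taking nonempty values) both hold, and your handling of the degenerate cases is correct. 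A side benefit of your route is logical economy: it exhibits the theorem as a formal consequence of results already quoted in this paper, rather than an independent import from~\cite{Boxer16a}, though one should check against~\cite{Boxer16a} that neither Theorem~\ref{weak-prod} nor Theorem~\ref{prod-connected} is itself derived there from the connectivity-preservation theorem, to rule out circularity at the source; both are in fact more elementary statements, so no circularity arises.
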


The tensor product adjacency does
not yield a similar result, as
shown in the following.

\begin{exl}
Consider $\{0\} \subset \Z$,
$[0,1]_{\Z} \subset \Z$. The
multivalued function
$f: (\{0\},c_1) \multimap ([0,1]_{\Z}, c_1)$
defined by $f(0)=[0,1]_{\Z}$
is connectivity preserving. However,
$f \times f: \{0\}^2 = \{(0,0)\}
\multimap [0,1]_{\Z}^2$ is not $(T(c_1,c_1),T(c_1,c_1))$-connectivity
preserving.
\end{exl}

\begin{proof}
This follows from the observations
that $\{(0,0)\}$ has a single point, hence must be $T(c_1,c_1)$-connected; but, by Example~\ref{conn-not-preserved},
$(f \times f)(0,0)=[0,1]_{\Z}^2$ is not $T(c_1,c_1)$-connected.
\end{proof}

However, we have the following.

\begin{thm}
Let $f_i: (X_i,\kappa_i) \multimap
(Y_i,\lambda_i)$ be multivalued
functions, $1 \le i \le v$. Let
$X = \Pi_{i=1}^v X_i$, $Y=\Pi_{i=1}^v Y_i$. Suppose
$f=\Pi_{i=1}^v f_i: X \multimap Y$
is $(T(\kappa_1,\ldots,\kappa_v),
T(\lambda_1, \ldots, \lambda_v))$-connectivity preserving. Then each $f_i$ is
connectivity preserving.
\end{thm}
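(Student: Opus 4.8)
The plan is to verify the two conditions of Proposition~\ref{mild-and-weak}: a multivalued function is connectivity preserving precisely when it has weak continuity and all of its point-images are connected. So for each fixed index~$i$ I would establish separately that $f_i$ has weak continuity and that $f_i(x_i)$ is $\lambda_i$-connected for every $x_i \in X_i$.

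Weak continuity of the factors is almost immediate. Since $f$ is connectivity preserving, Theorem~\ref{mildadj} tells us in particular that $f$ has weak continuity (the requirement that adjacent points have adjacent images is exactly weak continuity). Theorem~\ref{weak-prod-cont-implies-factor} then applies directly to conclude that each $f_i$ has weak continuity.

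For connectivity of point-images, fix $i$ and $x_i \in X_i$, choose arbitrary points $x_j \in X_j$ for $j \neq i$, and form $p = (x_1, \ldots, x_v) \in X$. The singleton $\{p\}$ is connected, so since $f$ is connectivity preserving, $f(p)$ is a $T(\lambda_1, \ldots, \lambda_v)$-connected subset of $Y$. By the definition of the product multivalued function, $f(p) = \Pi_{j=1}^v f_j(x_j)$. Now I would apply Theorem~\ref{Pi-conn}, with the subsets $f_j(x_j) \subseteq Y_j$ (each carrying the $\lambda_j$-adjacency) playing the role of the factor images: since their tensor product is connected, each factor $f_j(x_j)$ is $\lambda_j$-connected. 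In particular $f_i(x_i)$ is $\lambda_i$-connected, as desired.

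Combining these two facts through Proposition~\ref{mild-and-weak} shows that each $f_i$ is connectivity preserving. The only delicate step is the point-image argument: one must notice that a single point's image is forced to be $T$-connected and then feed this into Theorem~\ref{Pi-conn}, which is exactly the implication (product connected forces the factors connected) that survives for the tensor product even though its converse fails, as witnessed by Example~\ref{conn-not-preserved}. Aside from the routine assumption that the images $f_j(x_j)$ are nonempty, I expect no genuine obstacle here.
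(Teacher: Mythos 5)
Your proposal is correct and matches the paper's proof in all essentials: both decompose the claim via Theorem~\ref{mildadj} (equivalently Proposition~\ref{mild-and-weak}) into connectedness of point-images --- obtained exactly as you do, by applying Theorem~\ref{Pi-conn} to $f(p)=\Pi_{j=1}^v f_j(x_j)$ --- plus the adjacency condition on the factors. The only cosmetic difference is that you get the adjacency condition by citing Theorem~\ref{weak-prod-cont-implies-factor}, whereas the paper re-runs that theorem's argument inline, so your proof stands or falls exactly where the paper's does (including the implicit assumption, shared by both, that each $X_j$ contains a $\kappa_j$-adjacent pair so that $T$-adjacent points of $X$ exist).
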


\begin{proof}
Let $p=(x_1,\ldots,x_v) \in X$,
where $x_i \in X_i$. By
assumption, $f(p)=\Pi_{i=1}^v f_i(x_i)$ is $T(\lambda_1, \ldots, \lambda_v)$-connected. From Theorem~\ref{Pi-conn}, it follows
that $f_i(x_i)$ is $\lambda_i$-connected.

Suppose $x_i' \adj_{\kappa_i} x_i$ in $X_i$. Then $p'= (x_1',\ldots,x_v') \adj_{T(\kappa_1,\ldots,\kappa_v)} p$. Since $f$ is connectivity preserving, 
$f(p')$ and $f(p)$ are $T(\lambda_1,\ldots,\lambda_v)$-adjacent subsets of $Y$. This implies there exist
$q'=(y_1',\ldots,y_v') \in f(p')$, $q = (y_1, \ldots, y_v) \in f(p)$ such
that $q' \adj_{T(\kappa_1,\ldots,\kappa_v)} q$ or $q'=q$. Therefore,
for each index~$i$, $y_i' \adj_{\lambda_i} y_i$ or $y_i'=y_i$.
Since $y_i' \in f_i(x_i')$ and
$y_i \in f_i(x_i)$, we have that
$f_i(x_i')$ and $f_i(x_i)$ are
$\lambda_i$-adjacent subsets of
$Y_i$.

From Theorem~\ref{mildadj}, $f_i$
is connectivity preserving.
\end{proof}

For the Cartesian product adjacency,
we have the following.

\begin{thm}
Let $(X_i,\kappa_i)$ and $(Y_i,\lambda_i)$ be digital
images, for
$1 \le i \le v$. Let $f_i: X_i \multimap Y_i$ be a multivalued
function. Let $f = \Pi_{i=1}^v f_i: X=\Pi_{i=1}^v X_i \multimap Y=\Pi_{i=1}^v Y_i$ be the product 
function. Then $f$ is
$(\times_{i=1}^v \kappa_i,\times_{i=1}^v \lambda_i)$-connectivity preserving if and only if each $f_i$ is connectivity preserving.
\end{thm}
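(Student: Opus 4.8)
The plan is to reduce the statement to two facts that have already been proved for the Cartesian product adjacency, glued together by the characterization of connectivity preservation in Proposition~\ref{mild-and-weak}. That proposition says a multivalued function is connectivity preserving exactly when it has weak continuity \emph{and} every point-image is connected. So I would establish the equivalence by treating these two conditions separately and invoking the corresponding product/factor results for each.

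For the weak-continuity condition there is nothing new to do: Theorem~\ref{weak-Cart-prod-implies-factor} already gives that $f=\Pi_{i=1}^v f_i$ has $(\times_{i=1}^v\kappa_i,\times_{i=1}^v\lambda_i)$-weak continuity if and only if each $f_i$ has weak continuity. For the point-image condition I would fix $p=(x_1,\ldots,x_v)\in X$ and observe that $f(p)=\Pi_{i=1}^v f_i(x_i)$ is a Cartesian product of the subsets $f_i(x_i)\subset Y_i$, where the restriction of $\times_{i=1}^v\lambda_i$ to $\Pi_{i=1}^v f_i(x_i)$ is precisely the Cartesian product adjacency built from the adjacencies $\lambda_i$ restricted to the $f_i(x_i)$. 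Hence Theorem~\ref{product-Pi-conn} applies verbatim and yields that $f(p)$ is $\times_{i=1}^v\lambda_i$-connected if and only if each $f_i(x_i)$ is $\lambda_i$-connected.

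Combining these via Proposition~\ref{mild-and-weak} gives both directions at once. If $f$ is connectivity preserving, then $f$ has weak continuity and connected point-images, so by the two results above each $f_i$ has weak continuity and connected point-images, hence each $f_i$ is connectivity preserving; the converse runs identically in reverse. I expect the only delicate point to be the quantifier bookkeeping in the point-image half: translating ``$f(p)$ connected for every $p\in X$'' into ``$f_i(x_i)$ connected for every $i$ and every $x_i\in X_i$'' requires that each fixed $x_i$ actually occur as the $i$th coordinate of some $p\in X$, which is guaranteed precisely when the other factors $X_j$ are nonempty. Since digital images are taken nonempty, this extension is always available, and no further argument is needed.
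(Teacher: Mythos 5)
Your proposal is correct, and every result you invoke is established earlier in the paper's development, so the citations are legitimate: Proposition~\ref{mild-and-weak} splits connectivity preservation into weak continuity plus connected point-images, Theorem~\ref{weak-Cart-prod-implies-factor} disposes of the first condition as a product/factor equivalence, and Theorem~\ref{product-Pi-conn} disposes of the second, once one observes --- as you carefully do --- that the restriction of $\times_{i=1}^v \lambda_i$ to $\Pi_{i=1}^v f_i(x_i)$ coincides with the Cartesian product adjacency built from the restrictions $\lambda_i|_{f_i(x_i)}$, which holds because $\times_{i=1}^v \lambda_i$-adjacency is defined purely coordinatewise. The paper's own proof uses the same underlying decomposition (via Theorem~\ref{mildadj}, which is the same characterization as Proposition~\ref{mild-and-weak}) but carries it out by direct verification rather than citation: in the forward direction it pushes both connectedness of point-images and adjacency of image-sets down to the factors using the continuity of the projection maps $p_i$ (Theorem~\ref{product-projection-cont}), and in the backward direction it constructs explicit witnesses $q \in f(p)$, $q' \in f(p')$ by choosing $y_i \in f_i(x_i)$ at the unchanged coordinates and an equal-or-adjacent pair $y_k, y_k'$ at the single changed coordinate, citing Theorem~\ref{product-Pi-conn} only for the point-image half. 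So your route replaces the paper's inline re-derivation of the weak-continuity equivalence with a reference to Theorem~\ref{weak-Cart-prod-implies-factor}, rendering the theorem an essentially formal consequence of three earlier results; this buys brevity and makes the logical dependencies explicit, at the mild cost of self-containedness. Your nonemptiness caveat in the quantifier bookkeeping is apt and is not a weakness peculiar to your argument: the paper's proof likewise silently picks $x_i \in X_i$ for $i \neq k$, so both arguments tacitly assume all factors are nonempty.
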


\begin{proof}
Suppose $f$ is connectivity preserving. Let $p = (x_1,\ldots,x_v) \in X$, where
$x_i \in X_i$. Then
$f(p)=\Pi_{i=1}^v f_i(x_i)$ is
$\times_{i=1}^v \lambda_i$-connected. By Theorem~\ref{product-projection-cont}, $f_i(x_i)=p_i(f(p))$ is
$\lambda_i$-connected.

For any given index $k$, let
$x_k \adj_{\kappa_k} x_k'$ in $X_k$. For all indices $i \neq k$,
let $x_i \in X_i$. Then 
$p=(x_1,\ldots,x_v)$ and
$p'=(x_1,\ldots,x_{k-1},x_k', x_{k+1}, \ldots, x_v)$ are
$\times_{i=1}^v \kappa_i$-adjacent.
Since $f$ is connectivity preserving,
$f(p)$ and $f(p')$ are
$\times_{i=1}^v \lambda_i$-adjacent
subsets of $Y$. Therefore,
Theorem~\ref{product-projection-cont} implies
$f_k(x_k)=p_k(f(p))$ and
$f_k(x_k')=p_k(f(p'))$ are
$\lambda_k$-adjacent
subsets of $Y_k$. It follows from
Theorem~\ref{mildadj} that $f_k$
is connectivity preserving. Since $k$ was an arbitrarily selected index,
$f_i$ is connectivity preserving for all~$i$.

Now suppose each $f_i$ is connectivity preserving. Let
$p=(x_1,\ldots,x_v) \in X$ where
$x_i \in X_i$. Then $f(p)=
\Pi_{i=1}^v f_i(x_i)$ is, by
Theorem~\ref{product-Pi-conn},
$\times_{i=1}^v \lambda_i$-connected.

Suppose $p \adj_{\times_{i=1}^v \lambda_i} p'$ in $X$. Then
for some index~$k$,
$x_k \adj_{\kappa_i} x_k'$
in $X_k$ and for $i \neq k$ there
exist $x_i \in X_i$ such that
\[ p=(x_1,\ldots,x_v),~~~
p'=(x_1,\ldots,x_{k-1}, x_k', x_{k+1}, \ldots,x_v).
\]
Since $f_k$ is connectivity preserving, there exist
$y_k \in f_k(x_k)$ and $y_k' \in f_k(x_k')$ such that 
$y_k \adj_{\lambda_k} y_k'$ or $y_k=y_k'$.
For $i \neq k$, let $y_i \in f_i(x_i)$. Then
$q=(y_1,\ldots,y_v) \in f(p)$ and
$q'=(y_1,\ldots,y_{k-1},y_k',y_{k+1},\ldots,y_v)  \in f(p')$ are $\times_{i=1}^v \lambda_i$-adjacent or equal. Therefore, $f(p)$ and $f(q)$ are
$\times_{i=1}^v \lambda_i$-adjacent
subsets of $Y$. It follows from
Theorem~\ref{mildadj} that $f$
is connectivity preserving.
\end{proof}

For lexicographic adjacency,
\begin{itemize}
\item Example~\ref{lexico-factor-not-implies-prod}
shows that there is no product property for
connectivity preservation; and
\item there is no factor property for connectivity preservation, as
      the following example shows.
\end{itemize}

\begin{exl}
Let $f_1:(\{0\},c_1) \multimap ([0,1]_{\Z},c_1)$ be the multivalued function
$f_1(0)=[0,1]_{\Z}$. Let
$f_2:(\{0\},c_1) \multimap (\{0,2\},c_1)$ be the multivalued function $f_2(0)=\{0,2\}$. Then
\[ f=f_1 \times f_2: \{0\}^2=\{(0,0)\} \multimap [0,1]_{\Z} \times \{0,2\} \]
is 
$(L(c_1,c_1),L(c_1,c_1))$-connectivity preserving, but $f_2$ is not
$(c_1,c_1)$-connectivity preserving.
\end{exl}

\begin{proof}
This follows from the observations that the single 
point $(0,0)$ is connected, and
$f(0,0)=[0,1]_{\Z} \times \{0,2\}$ is $L(c_1,c_1)$-connected (see Figure~2).
\end{proof}

\section{Shy maps}
We have the following.

\begin{thm}
Let $f: (X,\kappa) \to (Y,\lambda)$
be a shy map of digital images.
Then $f$ is an isomorphism if
and only if $f$ is locally one-to-one.
\end{thm}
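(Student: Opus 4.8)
The plan is to prove both implications, with the forward direction immediate and the converse the one that genuinely uses shyness.

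For the forward direction, if $f$ is an isomorphism then in particular $f$ is a bijection, hence one-to-one; by the remark following the definition of locally one-to-one, any one-to-one function between digital images is locally one-to-one. So nothing further is needed here.

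For the converse, suppose $f$ is shy and locally one-to-one. First I would show that $f$ is injective. Suppose toward a contradiction that $f(x)=f(x')=y$ for some $x\neq x'$. Since $f$ is shy, the fiber $f^{-1}(y)$ is $\kappa$-connected by Definition~\ref{shy-def}, and since it contains the two distinct points $x,x'$, any connecting path inside $f^{-1}(y)$ must contain two \emph{distinct} $\kappa$-adjacent points $a,b\in f^{-1}(y)$. Then $b\in N_{\kappa}^{*}(a)$ while $f(a)=f(b)=y$, contradicting the local injectivity of $f$ on $N_{\kappa}^{*}(a)$. Hence each fiber $f^{-1}(y)$ is a single point, so $f$ is injective; being also a surjection (shy maps are surjective by Definition~\ref{shy-def}), $f$ is a bijection.

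It then remains to show that $f^{-1}$ is continuous. Since $f$ is shy, Theorem~\ref{shy-thm} gives that the multivalued function $f^{-1}:Y\multimap X$ is connectivity preserving. Because $f$ is now known to be a bijection, $f^{-1}$ is in fact single-valued, so by Proposition~\ref{single-val-as-multi} together with Proposition~\ref{mild-and-weak} its connectivity preservation coincides with continuity as a single-valued map (the point-images $f^{-1}(y)$ are singletons, hence trivially connected, so connectivity preservation reduces to weak continuity, which for single-valued maps is continuity). Thus $f$ is a continuous bijection whose inverse is continuous, i.e. an isomorphism.

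The only step requiring any care is the passage from ``connected fiber with at least two points'' to ``two distinct $\kappa$-adjacent points in the fiber,'' which is precisely the configuration that local injectivity forbids; everything else is a direct assembly of the cited results, so I do not expect a serious obstacle.
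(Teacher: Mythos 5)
Your proof is correct and follows essentially the same route as the paper's: injectivity is obtained exactly as in the paper (a nontrivial connected fiber would contain two distinct $\kappa$-adjacent points, contradicting local injectivity), and continuity of $f^{-1}$ comes from shyness. The only cosmetic difference is that the paper deduces continuity of $f^{-1}$ directly from the connectedness of $f^{-1}(\{y,y'\})$ for adjacent $y,y'$, whereas you route the same fact through Theorem~\ref{shy-thm}, Proposition~\ref{mild-and-weak}, and Proposition~\ref{single-val-as-multi}; both are valid and of comparable length.
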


\begin{proof}
It is obvious that if $f$ is an
isomorphism, then $f$ is locally one-to-one.

To show the converse, we argue as
follows. Since $f$ is shy, we know
$f$ is a continuous surjection.

To show $f$ is one-to-one, suppose
there exist $x,x' \in X$ such that $y=f(x)=f(x') \in Y$. Since $f$
is shy, $f^{-1}(y)$ is $\kappa$-connected. Therefore,
if $x \neq x'$ then there is a
path of distinct points
$P=\{x_i\}_{i=1}^m \subset f^{-1}(y)$
such that $x=x_1$, $x_i \adj x_{i+1}$ for $1 \le i < m$, and
$x_m=x'$. But since $f$ is locally
one-to-one, $f|_{N_{\kappa}^*(x)}$ is one-to-one, so $f(x_2) \neq f(x)$,
contrary to the assumption $P \subset f^{-1}(y)$. Therefore, we must have
$x=x'$, so $f$ is one-to-one.

Since $f$ is one-to-one, $f^{-1}$ is one-to-one. Since $f$ is shy,
given $y \adj y'$ in $Y$,
$f^{-1}(\{y,y'\})$ is connected.
Thus, $f^{-1}$ is continuous. This
completes the proof that $f$ is an
isomorphism.
\end{proof}

The following generalizes a result
of~\cite{Boxer16}.

\begin{thm}
\label{shy-prod}
\rm{\cite{Boxer16a}}
Let $f_i: (X_i, \kappa_i) \to (Y_i, \lambda_i)$ be
a continuous surjection between digital images,
$1 \leq i \leq v$. Then the product map
\[ \Pi_{i=1}^v f_i : (\Pi_{i=1}^v X_i, NP_v(\kappa_1, \ldots, \kappa_v)) \to (\Pi_{i=1}^v Y_i, NP_v(\lambda_1, \ldots, \lambda_v))
\]
is shy if and only if each $f_i$
is a shy map. $\qed$
\end{thm}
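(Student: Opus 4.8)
The plan is to avoid arguing directly from the two clauses of Definition~\ref{shy-def} and instead to exploit the characterization of shyness in terms of the inverse multivalued function. By Theorem~\ref{shy-thm}, a continuous surjection is shy precisely when its inverse, viewed as a multivalued function, is connectivity preserving. Since Theorem~\ref{normal-conn-preserv} already asserts that a product of multivalued functions (under the $NP_v$ adjacency on both source and target) is connectivity preserving if and only if each factor is, the result should follow by recognizing the inverse of the product map as the product of the factorwise inverses and then chaining these two equivalences.

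First I would record the preliminary reductions. Write $f = \Pi_{i=1}^v f_i$, $X = \Pi_{i=1}^v X_i$, and $Y = \Pi_{i=1}^v Y_i$. Since each $f_i$ is continuous, Theorem~\ref{prod-cont} gives that $f$ is $(NP_v(\kappa_1,\ldots,\kappa_v), NP_v(\lambda_1,\ldots,\lambda_v))$-continuous; and since each $f_i$ is onto, $f$ is plainly onto (take a coordinatewise preimage). Thus $f$ is a continuous surjection, so Theorem~\ref{shy-thm} applies to it, and likewise each $f_i$ is a continuous surjection to which Theorem~\ref{shy-thm} applies.

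The key step is the set-theoretic identity
\[ f^{-1}(y_1,\ldots,y_v) = \Pi_{i=1}^v f_i^{-1}(y_i), \]
valid because $(x_1,\ldots,x_v) \in f^{-1}(y_1,\ldots,y_v)$ if and only if $f_i(x_i)=y_i$ for every $i$. This says exactly that the inverse multivalued function $f^{-1}: Y \multimap X$ equals the product multivalued function $\Pi_{i=1}^v f_i^{-1}$, where $f_i^{-1}: Y_i \multimap X_i$. Applying Theorem~\ref{normal-conn-preserv} to the multivalued functions $f_i^{-1}$ (with the roles of $X_i,\kappa_i$ and $Y_i,\lambda_i$ interchanged) then shows that $f^{-1}$ is $(NP_v(\lambda_1,\ldots,\lambda_v), NP_v(\kappa_1,\ldots,\kappa_v))$-connectivity preserving if and only if each $f_i^{-1}$ is connectivity preserving.

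Finally I would assemble the chain of equivalences: $f$ is shy if and only if $f^{-1}$ is connectivity preserving (Theorem~\ref{shy-thm}), if and only if each $f_i^{-1}$ is connectivity preserving (the preceding paragraph), if and only if each $f_i$ is shy (Theorem~\ref{shy-thm} again, applied factorwise). I do not expect a genuine obstacle here; the only point demanding care is the bookkeeping that the multivalued inverse of a product map is the product of the factorwise multivalued inverses, together with the correct interchange of source and target adjacencies when invoking the connectivity-preservation product theorem. Once that identification is made explicit, the theorem reduces entirely to citing Theorems~\ref{shy-thm} and~\ref{normal-conn-preserv}.
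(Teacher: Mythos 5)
Your proof is correct. Note that the paper itself gives no proof of this statement---it is imported from \cite{Boxer16a} and stated with an immediate $\qed$---so there is no in-paper argument to compare against; but your derivation is complete and sound relative to results the paper does state. The three points requiring care all check out: (i) $f=\Pi_{i=1}^v f_i$ is a continuous surjection (continuity by Theorem~\ref{prod-cont}, surjectivity coordinatewise), so Theorem~\ref{shy-thm} legitimately applies to $f$, and the hypothesis that each $f_i$ is a continuous surjection makes it apply factorwise as well; (ii) the identity $f^{-1}(y_1,\ldots,y_v)=\Pi_{i=1}^v f_i^{-1}(y_i)$ says exactly that $f^{-1}$, as a multivalued function, is the product $\Pi_{i=1}^v f_i^{-1}$ in the sense of the paper's definition $(\Pi_{i=1}^v F_i)(x_1,\ldots,x_v)=\Pi_{i=1}^v F_i(x_i)$, with each $f_i^{-1}(y_i)$ nonempty by surjectivity; (iii) Theorem~\ref{normal-conn-preserv} carries no continuity hypothesis on the multivalued factors, so applying it to the inverses, with domain and codomain adjacencies interchanged as you do, is legitimate. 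The chain ``$f$ shy $\iff$ $f^{-1}$ connectivity preserving $\iff$ each $f_i^{-1}$ connectivity preserving $\iff$ each $f_i$ shy'' then closes the argument; this is very likely the intended route in \cite{Boxer16a} as well, since that paper is the source of both Theorem~\ref{normal-conn-preserv} and the shy-map product result, and the alternative---verifying the two clauses of Definition~\ref{shy-def} directly on preimages of adjacent points of $\Pi_{i=1}^v Y_i$---would only reprove special cases of the connectivity-preservation machinery.
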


For the tensor product, we have
the following.

\begin{thm}
\label{T-prod-shy-implies-factor}
Let $f_i: (X_i, \kappa_i) \to (Y_i, \lambda_i)$ be
a surjection between digital images,
$1 \leq i \leq v$. Let
$X=\Pi_{i=1}^v X_i$, $Y=\Pi_{i=1}^v Y_i$.
If the product function
\[ f=\Pi_{i=1}^v f_i: (X, T(\kappa_1, \ldots, \kappa_v)) \to (Y, T(\lambda_1, \ldots, \lambda_v))
\]
is shy, then $f_i$ is shy for each~$i$.
\end{thm}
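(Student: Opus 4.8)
The plan is to verify directly that each factor $f_i$ meets the three requirements of Definition~\ref{shy-def}: that it is a continuous surjection, that every point-inverse $f_i^{-1}(y_i)$ is $\kappa_i$-connected, and that $f_i^{-1}(\{y_i,y_i'\})$ is $\kappa_i$-connected whenever $y_i \adj_{\lambda_i} y_i'$. The two tools I would lean on are that the coordinate projection $p_i:(X,T(\kappa_1,\ldots,\kappa_v)) \to (X_i,\kappa_i)$ is continuous (Theorem~\ref{T-projection-cont}), so it sends $T$-connected sets to $\kappa_i$-connected sets, and that a $T$-connected product has $\kappa_i$-connected factors (Theorem~\ref{Pi-conn}).

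First I would dispose of continuity. Since $f$ is shy it is a continuous surjection, so Theorem~\ref{prod-cont-implies-factor} gives that each $f_i$ is continuous, and each $f_i$ is surjective by hypothesis. For the point-inverses, I would fix $y_i \in Y_i$ and, using surjectivity, choose $y_j \in Y_j$ for each $j \neq i$, setting $y=(y_1,\ldots,y_v)$. Then $f^{-1}(y)=\Pi_{j=1}^v f_j^{-1}(y_j)$ is $T(\kappa_1,\ldots,\kappa_v)$-connected because $f$ is shy, so Theorem~\ref{Pi-conn} forces every factor, in particular $f_i^{-1}(y_i)$, to be $\kappa_i$-connected.

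The two-point clause is where the real work lies. Given $y_i \adj_{\lambda_i} y_i'$ in $Y_i$, I would try to promote this to a genuine $T$-adjacency in $Y$ by selecting, for each $j \neq i$, a $\lambda_j$-adjacent pair $y_j \adj_{\lambda_j} y_j'$; then with $y=(y_1,\ldots,y_v)$ and $y'=(y_1',\ldots,y_v')$ we get $y \adj_{T(\lambda_1,\ldots,\lambda_v)} y'$. Shyness of $f$ makes $f^{-1}(\{y,y'\}) = \Pi_j f_j^{-1}(y_j) \cup \Pi_j f_j^{-1}(y_j')$ a $T$-connected set, and applying the continuous projection $p_i$ (whose image of each product block is the full corresponding factor, since every $f_j^{-1}(\cdot)$ is nonempty) yields that $f_i^{-1}(y_i) \cup f_i^{-1}(y_i') = f_i^{-1}(\{y_i,y_i'\})$ is $\kappa_i$-connected, as required.

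The main obstacle is exactly this promotion step: manufacturing a $T$-adjacent pair $y \adj_{T} y'$ requires each $Y_j$ with $j \neq i$ to contain a $\lambda_j$-adjacent pair. If some factor $Y_j$ has no adjacent points then $Y$ has no $T$-adjacent pair at all, the two-point clause in the shyness of $f$ becomes vacuous, and it can no longer be used to control $f_i$; indeed one can then exhibit examples (take $f_2 = \id_{\{0\}}$ and $f_1$ a bijection of two non-$\kappa_1$-adjacent points onto two $\lambda_1$-adjacent points) in which $f$ is shy but $f_1$ is not. I would therefore expect the clean argument above to require, and the statement to implicitly assume, that each factor $Y_j$ possesses a $\lambda_j$-adjacent pair (equivalently, that $Y$ has $T$-adjacent points); under that hypothesis the three verifications go through exactly as described.
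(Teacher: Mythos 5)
Your proof is essentially the paper's own: continuity of each $f_i$ from shyness of $f$ via Theorem~\ref{prod-cont-implies-factor}, connectedness of point-inverses $f_i^{-1}(y_i)$ via Theorem~\ref{Pi-conn} applied to $f^{-1}(y)=\Pi_{j=1}^v f_j^{-1}(y_j)$, and connectedness of $f_i^{-1}(\{y_i,y_i'\})$ by applying the continuous projection of Theorem~\ref{T-projection-cont} to the $T$-connected set $f^{-1}(\{y,y'\})$. What you add beyond the paper is worth noting. The paper's proof simply writes ``Let $y_i' \adj_{\lambda_i} y_i$ in $Y_i$. Then $y'=(y_1',\ldots,y_v') \adj_{T(\lambda_1,\ldots,\lambda_v)} y$'' without saying where the coordinates $y_j'$ for $j \neq i$ come from; as you observe, they must be chosen $\lambda_j$-adjacent to the corresponding $y_j$, which tacitly assumes every factor $Y_j$ contains a $\lambda_j$-adjacent pair. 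Your counterexample showing this hypothesis is not cosmetic checks out: take $X_2=Y_2=\{0\}$ with $f_2=\id$, and $f_1$ a bijection from two non-$\kappa_1$-adjacent points onto a $\lambda_1$-adjacent pair; then no two points of $X$ or of $Y$ are $T$-adjacent (the second coordinate can never be adjacent to itself), so $f=f_1\times f_2$ is vacuously continuous with singleton point-inverses and a vacuous two-point clause, hence shy, while $f_1^{-1}(\{y_1,y_1'\})$ is a disconnected two-point set, so $f_1$ is not shy. So your flagged hypothesis --- that each $Y_j$ (equivalently $Y$ itself) possesses adjacent points --- is genuinely needed for the statement as well as for the proof, and the paper uses it silently rather than stating it.
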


\begin{proof} 
Since $f$ is shy, it is continuous, so by Theorem~\ref{prod-cont-implies-factor}, each $f_i$ is continuous.
Clearly, each $f_i$ is a surjection.

Let $y_i \in Y_i$.
Let $y=(y_1,\ldots,y_v) \in Y$.
Since $f$ is shy, 
$f^{-1}(y)=\Pi_{i=1}^v f_i^{-1}(y_i)$ is $T(\kappa_1,\ldots,\kappa_v)$-connected. By Theorem~\ref{Pi-conn},
$f_i(y_i)$ is $\kappa_i$-connected.

Let $y_i' \adj_{\lambda_i} y_i$ in $Y_i$. Then $y'=(y_1',\ldots,y_v')
\adj_{T(\lambda_1,\ldots,\lambda_v)} y$. Since $f$ is shy,
\[f^{-1}(\{y,y'\})=f^{-1}(\{y\})\cup
  f^{-1}(\{y'\})=
  \Pi_{i=1}^v f_i^{-1}(y_i) \cup
  \Pi_{i=1}^v f_i^{-1}(y_i')
\]
is $T(\kappa_1,\ldots,\kappa_v)$-connected. By
Theorem~\ref{T-projection-cont},
\[p_i(f^{-1}(\{y,y'\}))=f_i^{-1}(y_i) \cup f_i^{-1}(y_i')\]
is $\kappa_i$-connected. From
Definition~\ref{shy-def}, we
conclude that $f_i$ is a shy map.
\end{proof}

The converse to Theorem~\ref{T-prod-shy-implies-factor} is not generally true, as shown by the following.

\begin{exl}
Let $f_1: ([0,1]_{\Z},c_1) \to (\{0\},c_1)$ be the function
$f_1(x)=0$. Let $f_2: ([0,1]_{\Z},c_1) \to ([0,1]_{\Z},c_1)$ be the function
$f_2(x)=x$. Then
$f_1$ and $f_2$ are shy, but
$f_1 \times f_2: ([0,1]_{\Z}^2, T(c_1,c_1)) \to (\{0\} \times [0,1]_{\Z}, T(c_1,c_1))$ is not shy.
\end{exl}

\begin{proof}
That $f_1$ and $f_2$ are shy is easily seen. Further, $f_1 \times f_2$ is a surjection. Notice that
$(0,0) \adj_{T(c_1,c_1)} (1,1)$, but
$(f_1 \times f_2)(0,0)=(0,0)$ and
$(f_1 \times f_2)(1,1)=(0,1)$ are
neither equal nor $T(c_1,c_1)$-adjacent. Therefore,
$f_1 \times f_2$ is not $(T(c_1,c_1),T(c_1,c_1))$-continuous, hence is not $(T(c_1,c_1),T(c_1,c_1))$-shy.
\end{proof}

For the Cartesian product adjacency, we have the following.

\begin{thm}
\label{Cart-shy-factor}
Let $f_i: (X_i, \kappa_i) \to (Y_i, \lambda_i)$ be
a surjection between digital images,
$1 \leq i \leq v$. Let
$X=\Pi_{i=1}^v X_i$, $Y=\Pi_{i=1}^v Y_i$.
Then the product function
\[ f=\Pi_{i=1}^v f_i: (X, \times_{i=1}^v \kappa_i) \to (Y, \times_{i=1}^v \lambda_i)
\]
is shy if and only if $f_i$ is shy for each~$i$.
\end{thm}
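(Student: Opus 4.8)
The plan is to prove both directions by exploiting the two structural facts that distinguish the Cartesian product adjacency: that $\times_{i=1}^v\kappa_i$-adjacent points differ in exactly one coordinate (Definition~\ref{product-adj-def}), and that a product of subimages is connected precisely when each factor is (Theorem~\ref{product-Pi-conn}). Throughout I use that continuity of $f$ is equivalent to continuity of all the $f_i$ (Theorem~\ref{Cart-prod-cont}) and that the projections $p_i$ are continuous (Theorem~\ref{product-projection-cont}). Since surjectivity of $f$ is clearly equivalent to surjectivity of each $f_i$, the only content of ``shy'' to track is the two connectedness conditions of Definition~\ref{shy-def}. The whole argument is the Cartesian-product analog of Theorem~\ref{T-prod-shy-implies-factor}.

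For the forward direction, assume $f$ is shy. Then $f$ is a continuous surjection, so Theorem~\ref{Cart-prod-cont} gives that each $f_i$ is a continuous surjection. To check the point-preimage condition, fix $y_i \in Y_i$ and choose $y_j \in Y_j$ for $j \neq i$ (possible by surjectivity); then $f^{-1}(y) = \Pi_{j=1}^v f_j^{-1}(y_j)$ is connected by shyness of $f$, and Theorem~\ref{product-Pi-conn} forces the factor $f_i^{-1}(y_i)$ to be $\kappa_i$-connected. For the pair-preimage condition, take $y_i \adj_{\lambda_i} y_i'$, keep the remaining coordinates fixed, and form the points $y$ and $y'$ that agree off index $i$; these are $\times_{j=1}^v \lambda_j$-adjacent. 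Then $f^{-1}(\{y,y'\})$ is connected, and applying the continuous projection $p_i$ sends it onto $f_i^{-1}(\{y_i,y_i'\})$, which is therefore $\kappa_i$-connected. Hence each $f_i$ is shy.

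For the reverse direction, assume each $f_i$ is shy. Theorem~\ref{Cart-prod-cont} gives that $f$ is a continuous surjection. For $y = (y_1,\ldots,y_v)$, the preimage $f^{-1}(y) = \Pi_{j=1}^v f_j^{-1}(y_j)$ is a product of $\kappa_j$-connected sets, hence $\times_{j=1}^v \kappa_j$-connected by Theorem~\ref{product-Pi-conn}. The key step is the pair-preimage condition: if $y$ and $y'$ are $\times_{j=1}^v \lambda_j$-adjacent, then by Definition~\ref{product-adj-def} they differ in a single coordinate $k$ with $y_k \adj_{\lambda_k} y_k'$, so the preimage factors as $f^{-1}(\{y,y'\}) = \Pi_{j \neq k} f_j^{-1}(y_j) \times f_k^{-1}(\{y_k,y_k'\})$. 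Each factor is connected --- the factors with $j \neq k$ by shyness of $f_j$ applied to a point, and the $k$-th factor by shyness of $f_k$ applied to the adjacent pair $\{y_k,y_k'\}$ --- so Theorem~\ref{product-Pi-conn} again yields that the whole preimage is $\times_{j=1}^v \kappa_j$-connected. Thus $f$ is shy.

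I do not expect a serious obstacle. The only point requiring care is the factorization $f^{-1}(\{y,y'\}) = \Pi_{j \neq k} f_j^{-1}(y_j) \times f_k^{-1}(\{y_k,y_k'\})$, which is exactly what makes the Cartesian product adjacency better-behaved than the tensor product here: because adjacent points move in only one coordinate, the pair-preimage remains a product of connected sets and Theorem~\ref{product-Pi-conn} applies directly, whereas for the tensor product the analogous pair-preimage need not factor in this way.
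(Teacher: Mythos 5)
Your proof is correct, and in the harder (reverse) direction it takes a genuinely different, and arguably cleaner, route than the paper at the key step. For the pair-preimage condition the paper does not use your factorization: it establishes separately that $f^{-1}(y)$ and $f^{-1}(y')$ are each $\times_{i=1}^v \kappa_i$-connected, then uses shyness of $f_k$ to extract witness points $x_k \in f_k^{-1}(y_k)$, $x_k' \in f_k^{-1}(y_k')$ that are equal or $\kappa_k$-adjacent, builds from them explicit points $w \in f^{-1}(y)$, $w' \in f^{-1}(y')$ that are equal or $\times_{i=1}^v \kappa_i$-adjacent, and finally invokes the fact (stated in the paper's preliminaries) that a union of connected adjacent sets is connected. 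You instead observe that because $y$ and $y'$ differ in only the coordinate $k$, the pair-preimage itself factors as $\Pi_{j \neq k} f_j^{-1}(y_j) \times f_k^{-1}(\{y_k,y_k'\})$, a product of connected sets, so Theorem~\ref{product-Pi-conn} applies in one stroke --- no witness construction and no appeal to the adjacent-union lemma. Your identification of this factorization as exactly the feature separating the Cartesian product from the tensor product is apt. Your forward direction also differs cosmetically: for the point-preimage you use Theorem~\ref{product-Pi-conn} in the factor direction where the paper projects via $p_i$ (you both use $p_i$ for the pair-preimage); either works. The one point worth a sentence in a polished write-up is that applying Theorem~\ref{product-Pi-conn} to subimages such as $f_j^{-1}(y_j) \subset X_j$ is legitimate because the restriction of $\times_{i=1}^v \kappa_i$ to a product of subsets coincides with the Cartesian product of the restricted adjacencies --- immediate from Definition~\ref{product-adj-def}, but it is the hypothesis under which the theorem is being invoked.
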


\begin{proof}
Suppose $f$ is shy. Then clearly
each $f_i$ is a surjection, and by
Theorem~\ref{Cart-prod-cont}, $f_i$
is continuous.

Let $y_i \in Y_i$. Let
$y=(y_1,\ldots, y_v) \in Y$.
Since $f$ is shy,
$f^{-1}(y)=\Pi_{i=1}^v f_i^{-1}(y_i)$ is $\times_{i=1}^v \kappa_i$-connected. By
Theorem~\ref{product-projection-cont}, the projection map $p_i$ is
continuous, so
$p_i(f^{-1}(y))=f_i^{-1}(y_i)$ is
$\kappa_i$-connected.

Let $y' \in Y$ be such that
$y' \adj_{\times_{i=1}^v \lambda_i} y$. Then $y'$ must be among the
points $q_i=(y_1,\ldots,y_{i-1},y_i',y_{i+1},\ldots,y_v)$, where
$y_i' \in Y_i$ satisfies $y_i' \adj_{\lambda_i} y_i$. Since $f$ is shy,
$f^{-1}(\{y,q_i\})=f^{-1}(y) \cup f^{-1}(q_i)$
is $\times_{i=1}^v \kappa_i$-connected. Since $p_i$ is
continuous,
\[ p_i(f^{-1}(\{y,q_i\}))=p_i(f^{-1}(y) \cup f^{-1}(q_i))=f_i^{-1}(y_i) \cup f_i^{-1}(y_i')=f_i^{-1}(\{y_i,y_i'\})
\]
is $\kappa_i$-connected. This completes the proof that each $f_i$ is shy.

Suppose each $f_i$ is shy. Then clearly $f$ is a surjection, and
by Theorem~\ref{Cart-prod-cont}, $f$ is continuous.

Let $y_i \in Y_i$. Let $y=(y_1,\ldots, y_v) \in Y$.
Since $f_i$ is shy, $f_i^{-1}(y_i)$ is $\kappa_i$-connected. By
Theorem~\ref{product-Pi-conn},
\begin{equation}
\label{shy-prod-1}
f^{-1}(y)=\Pi_{i=1}^v f_i^{-1}(y_i)
\mbox{ is } \times_{i=1}^v \kappa_i\mbox{-connected.}
\end{equation}

Let $y' \in Y$ be such that
$y' \adj_{\times_{i=1}^v \lambda_i} y$. Then for some index~$i$, 
$y'=(y_1,\ldots,y_{i-1},y_i',y_{i+1},\ldots,y_v)$, where
$y_i' \in Y_i$ satisfies $y_i' \adj_{\lambda_i} y_i$. Similarly,
\begin{equation}
\label{shy-prod-2}
f^{-1}(y') \mbox{ is }
\times_{i=1}^v \kappa_i\mbox{-connected.}
\end{equation}

Since $f_i$ is shy, $f_i^{-1}(\{y_i,y_i'\})$
is connected, so there exist $x_i \in f_i^{-1}(y_i)$,
$x_i' \in f_i^{-1}(y_i')$ such that
$x_i \adj_{\kappa_i} x_i'$ or $x_i = x_i'$. For indices $j \ne i$, 
let $x_j \in f_j^{-1}(y_j)$. Then
$w=(x_1,\ldots, x_v)$ and
$w'=(x_1,\ldots, x_{i-1},x_i',x_{i+1},\ldots, x_v)$ satisfy 
\begin{equation}
\label{shy-prod-3}
w \in f^{-1}(y),~ w' \in f^{-1}(y'), \mbox{ and }
w \adj_{\times_{i=1}^v \kappa_i} w'
\mbox{ or } w=w'.
\end{equation}
From statements~(\ref{shy-prod-1}),
(\ref{shy-prod-2}), and
(\ref{shy-prod-3}), we conclude that
$f^{-1}(\{y,y'\})$ is
$\times_{i=1}^v \kappa_i$-connected.
Therefore, $f$ is shy.
\end{proof}

For the lexicographic adjacency,
we have the following.

\begin{thm}
\label{lex-shy-prod}
Let $f_i: (X_i,\kappa_i) \to (Y_i,\lambda_i)$ be functions 
between digital images, $1 \le i \le v$. Let $X = \Pi_{i=1}^v X_i$,
$Y= \Pi_{i=1}^v Y_i$,
$f=\Pi_{i=1}^v f_i: (X,L(\kappa_1,\ldots,\kappa_v)) \to (Y,L(\lambda_1,\ldots,\lambda_v))$.
If each $f_i$ is shy, then $f$ is
shy.
\end{thm}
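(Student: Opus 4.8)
The plan is to verify directly the three defining conditions of a shy map from Definition~\ref{shy-def}: that $f$ is a continuous surjection, that each point-preimage $f^{-1}(y)$ is connected, and that $f^{-1}(\{y,y'\})$ is connected whenever $y \adj_{L(\lambda_1,\ldots,\lambda_v)} y'$. Surjectivity is immediate, since a shy map is by definition surjective, so each $f_i$ is onto and hence $f = \Pi_{i=1}^v f_i$ is onto.

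For the point-preimages, I would observe that $f^{-1}(y) = \Pi_{i=1}^v f_i^{-1}(y_i)$ for $y = (y_1,\ldots,y_v)$, and that each factor $f_i^{-1}(y_i)$ is $\kappa_i$-connected because $f_i$ is shy. Connectedness of the product under the lexicographic adjacency then follows from Theorem~\ref{lex-conn} applied to the images $(f_i^{-1}(y_i),\kappa_i)$: letting $k$ be the smallest index with $|f_k^{-1}(y_k)|>1$, the factor $f_k^{-1}(y_k)$ is connected, so $\Pi_{i=1}^v f_i^{-1}(y_i)$ is $L$-connected (and it is a single point, hence trivially connected, if no such $k$ exists).

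For the preimage of an adjacent pair, given $y \adj_{L(\lambda_1,\ldots,\lambda_v)} y'$ I would let $k$ be the smallest index with $y_k \adj_{\lambda_k} y_k'$ (so $y_i = y_i'$ for $i<k$). Shyness of $f_k$ makes $f_k^{-1}(\{y_k,y_k'\})$ connected, which yields points $a_k \in f_k^{-1}(y_k)$ and $b_k \in f_k^{-1}(y_k')$ with $a_k \adj_{\kappa_k} b_k$. Choosing fillers in the remaining (nonempty) preimages, identical in coordinates $i<k$, produces points $P \in f^{-1}(y)$ and $P' \in f^{-1}(y')$ that agree in their first $k-1$ coordinates and are $\kappa_k$-adjacent in coordinate $k$, hence $P \adj_{L(\kappa_1,\ldots,\kappa_v)} P'$. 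Thus $f^{-1}(y)$ and $f^{-1}(y')$ are adjacent sets; since each is $L$-connected by the previous step, their union $f^{-1}(\{y,y'\})$ is connected by the fact that a finite union of connected adjacent sets is connected.

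The step I expect to be the main obstacle is continuity of $f$. The natural argument takes $p \adj_{L(\kappa_1,\ldots,\kappa_v)} p'$ with smallest disagreement index $k$ and uses continuity of $f_k$ to get that $f_k(x_k)$ and $f_k(x_k')$ are equal or $\lambda_k$-adjacent. When they are $\lambda_k$-adjacent, the first $k-1$ image coordinates agree and $f(p) \adj_{L(\lambda_1,\ldots,\lambda_v)} f(p')$ as desired. The difficulty is the collapsing case $f_k(x_k)=f_k(x_k')$: since the lexicographic adjacency places no constraint on the coordinates beyond $k$ (Remark~\ref{gaps-allowed}), the images $f(p)$ and $f(p')$ may then disagree first at some later index $\ell$ where $x_\ell, x_\ell'$ are neither equal nor $\kappa_\ell$-adjacent, so $f_\ell(x_\ell)$ and $f_\ell(x_\ell')$ need not be $\lambda_\ell$-adjacent. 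This is exactly the phenomenon that forces the local-injectivity hypothesis in Theorem~\ref{lexico-1-1} and is illustrated by Example~\ref{lexico-factor-not-implies-prod}. I therefore expect that continuity, and hence shyness, of the product does not follow from shyness of the factors alone, and that either an additional hypothesis preventing any $f_k$ from collapsing a $\kappa_k$-adjacent pair (local injectivity) is required, or the argument must exploit some further feature of shy maps; pinning down this continuity step is where the real work lies.
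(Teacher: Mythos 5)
Your verification of surjectivity and of the two preimage conditions coincides step-for-step with the paper's proof: the paper likewise writes $f^{-1}(y)=\Pi_{i=1}^v f_i^{-1}(y_i)$, obtains $L$-connectedness of point preimages from shyness of the factors together with Theorem~\ref{lex-conn}, and, for $L$-adjacent $y,y'$ with first disagreement at index $k$, uses connectedness of $f_k^{-1}(\{y_k,y_k'\})$ and the agreement of coordinates before $k$ to conclude that $f^{-1}(y)$ and $f^{-1}(y')$ are adjacent connected sets, hence have connected union. (Your treatment of the degenerate case of Theorem~\ref{lex-conn}, where every preimage factor is a singleton, is actually more careful than the paper's.)

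The continuity step you flag as the obstacle is a genuine gap --- but in the paper, not in your analysis. The paper's proof never addresses continuity of $f$ at all, even though Definition~\ref{shy-def} makes continuity part of being shy, and your suspicion that it can fail is correct. Concretely: let $f_1:([0,1]_{\Z},c_1)\to(\{0\},c_1)$ be the constant map and let $f_2$ be the identity on $([0,2]_{\Z},c_1)$. Both are shy, yet for $p=(0,0)$ and $p'=(1,2)$ we have $p \adj_{L(c_1,c_1)} p'$ in $X$ (first coordinates adjacent, later coordinates unconstrained, per Definition~\ref{lexico}), while $f(p)=(0,0)$ and $f(p')=(0,2)$ are neither equal nor $L(c_1,c_1)$-adjacent in $\{0\}\times[0,2]_{\Z}$. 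This is exactly the collapsing phenomenon of Example~\ref{lexico-factor-not-implies-prod}, transplanted to shy factors; note that both connectivity conditions of Definition~\ref{shy-def} hold for this $f$, which shows they do not imply continuity and that the theorem as stated is false. What your argument (and the paper's) actually proves is the weaker statement: if each $f_i$ is shy \emph{and} $f$ is continuous, then $f$ is shy. Your proposed repair via local injectivity is sound but trivializes the result: by the unlabeled theorem opening the paper's section on shy maps, a shy locally one-to-one map is an isomorphism, so under that hypothesis each $f_i$ is an isomorphism, whence $f$ is an $L$-isomorphism by Theorem~\ref{L-iso} and shyness is immediate.
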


\begin{proof}
Let $y=(y_1,\ldots,y_v) \in Y$,
where $y_i \in Y_i$. Then
$f^{-1}(y)=\Pi_{i=1}^v f_i^{-1}(y_i)$. Since each $f_i$ is shy,
$f_i^{-1}(y_i)$ is $\kappa_i$-connected. By Theorem~\ref{lex-conn}, $f^{-1}(y)$ 
is $L(\kappa_1,\ldots,\kappa_v)$-connected.

Let $p=(y_1',\ldots,y_v') \adj_{L(\lambda_1,\ldots,\lambda_v)} y$ in $Y$. Then for
some smallest index~$k$, $y_k' \adj_{\lambda_k} y_k$ and if $k>1$ then $y_i=y_i'$ for $i<k$. Since
$f_k$ is shy, $f_k^{-1}(\{y_k,y_k'\})$ is $\kappa_k$-connected. Further, if
$k>1$ then $f_i^{-1}(\{y_i,y_i'\})=f_i^{-1}(y_i)$ is connected,
since $f_i$ is shy. Now,

\begin{equation}
\label{lex-shy-eqs0}
f^{-1}(p)= \Pi_{i<k}f_i^{-1}(y_i) \times f_k^{-1}(y_k) \times \Pi_{i>k}f_i^{-1}(y_i), 
\end{equation}
\begin{equation}
\label{lex-shy-eqs}
f^{-1}(p')= \Pi_{i<k}f_i^{-1}(y_i') \times f_k^{-1}(y_k') \times \Pi_{i>k}f_i^{-1}(y_i')
\end{equation}
By the shyness of the $f_i$ and Theorem~\ref{lex-conn}, each of
$f^{-1}(p)$ and $f^{-1}(p')$ is $L(\kappa_1,\ldots,\kappa_v)$-connected. Further,
since $y_i=y_i'$ for $i<k$ and, by shyness of $f_k$,
\begin{equation}
\label{is-conn}
f_k^{-1}(\{y_k,y_k'\}) \mbox{ is } \kappa_k\mbox{-connected,}
\end{equation}
from statements~(\ref{lex-shy-eqs0}), (\ref{lex-shy-eqs}), and~(\ref{is-conn})
we can conclude that
$f^{-1}(p)$ and $f^{-1}(p')$ are $L(\kappa_1,\ldots,\kappa_v)$-adjacent sets. Therefore,
$f^{-1}(\{p,p'\})=f^{-1}(p)\cup f^{-1}(p')$ is
$L(\kappa_1,\ldots,\kappa_v)$-connected.
Therefore, $f$ is shy.
\end{proof}

The following shows that the
converse of Theorem~\ref{lex-shy-prod} is not generally true.

\begin{exl}
Let $f_1: ([0,1]_{\Z},c_1) \to \{0\} \subset (\Z,c_1)$ be the function
$f_1(x)=0$. Let $f_2: (\{0,2\},c_1) \to \{0\} \subset (\Z,c_1)$ be the function
$f_2(x)=0$. Then
\[ f_1 \times f_2: ([0,1]_{\Z} \times \{0,2\}, L(c_1,c_1)) \to (\{(0,0)\},L(c_1,c_1)) \]
is shy, but $f_2$ is not shy.
\end{exl}

\begin{proof}
Since $f_2^{-1}(0)$ is not connected, $f_2$ is not shy. However,
$[0,1]_{\Z} \times \{0,2\}$ is $L(c_1,c_1)$-connected, as discussed in 
Example~\ref{pretzel}, so, from Definition~\ref{shy-def}, $f_1 \times f_2$ is shy.
\end{proof}

\section{Further remarks}
We have studied the tensor product, Cartesian product, and lexicographic adjacencies for finite Cartesian
products of digital images. We have obtained many results for
``product" and ``factor" properties that parallel
results obtained for extensions
of the normal product adjacency in~\cite{Boxer16a}.

However, there are many properties known~\cite{Boxer16a} for the
normal product adjacency whose analogs for the adjacencies studied
here are either false or we were not able to derive. By comparing the
results of~\cite{Boxer16a} with those of the current paper, it
appears that the normal product adjacency is the adjacency that
yields the most satisfying results for Cartesian products of digital images.

\section{Acknowledgment}
The anonymous reviewers were very helpful. Their
corrections and suggestions are gratefully acknowledged.

\end{document}